\title[Singularity formation in Hele-Shaw]{On singularity formation in a Hele-Shaw model}
\author[P.~Constantin]{Peter Constantin}
\address{Department of Mathematics, Princeton University, Princeton, NJ 08544}
\email{const@math.princeton.edu}
\author[T.~Elgindi]{Tarek Elgindi}
\address{Department of Mathematics, UC San Diego, San Diego, CA 92093}
\email{telgindi@ucsd.edu}
\author[H.~Nguyen]{Huy Nguyen}
\address{Program in Applied and Computational Mathematics, Princeton University, Princeton, NJ 08544}
\email{qn@math.princeton.edu}
\author[V.~Vicol]{Vlad Vicol}
\address{Department of Mathematics, Princeton University, Princeton, NJ 08544}
\email{vvicol@math.princeton.edu}
\newcommand{\huy}[1]{\textcolor{green}{#1}}
\newcommand{\bq}{\begin{equation}}
\newcommand{\eq}{\end{equation}}
\newcommand{\bqa}{\begin{eqnarray*}}
\newcommand{\eqa}{\end{eqnarray*}}
\newcommand{\Rr}{\mathbb{R}}
\newcommand{\la}{\label}
\newcommand{\be}{\begin{equation}}
\newcommand{\ee}{\end{equation}}
\newcommand{\ba}{\begin{array}{l}}
\newcommand{\ea}{\end{array}}
\theoremstyle{plain}
\newtheorem{theo}{Theorem}[section]
\newtheorem{prop}[theo]{Proposition}
\newtheorem{lemm}[theo]{Lemma}
\newtheorem{defi}[theo]{Definition}
\theoremstyle{definition}
\newtheorem{rema}[theo]{Remark}
\DeclareSymbolFont{pletters}{OT1}{cmr}{m}{sl}
\DeclareMathSymbol{s}{\mathalpha}{pletters}{`s}
\def\defn{\mathrel{:=}}
\def\eps{\varepsilon}
\def\la{\left\lvert}
\def\le{\leq}
\def\mez{\frac{1}{2}}
\def\ra{\right\rvert}
\def\tdm{\frac{3}{2}}
\def\xN{\mathbf{N}}
\def\cF{\mathcal{F}}
\def\p{\partial}
\def\wc{\rightharpoonup}
\def\D{\mathscr{D}}
\numberwithin{equation}{section}
\begin{document}

\begin{abstract}
We discuss a lubrication approximation model of the interface between two immiscible fluids in a Hele-Shaw cell, derived in ~\cite{CDGKSZ93} and widely studied since. The model consists of a single one dimensional evolution equation for the thickness $2h = 2h(x,t)$ of a thin neck of fluid,
\[
\partial_t h + \partial_x( h \, \partial_x^3 h) = 0\, ,
\]
for $x\in (-1,1)$ and  $t\ge 0$. The boundary conditions fix the neck height and the pressure jump: 
\[
h(\pm 1,t) = 1, \qquad \partial_{x}^2 h(\pm 1,t) = P>0.
\]
We prove that starting from smooth and positive $h$,  as long as $h(x,t) >0$, for $x\in [-1,1], \; t\in [0,T]$, no singularity can arise in the solution up to time $T$.  As a consequence, we prove for any $P>2$ and any smooth and positive initial datum that the solution pinches off in either finite or infinite time, {\it i.e.}, $\inf_{[-1,1]\times[0,T_*)} h = 0$, for some $T_* \in (0,\infty]$. These facts have been long anticipated on the basis of numerical and theoretical studies.
\hfill \today
\end{abstract}

\keywords{Hele-Shaw, interface, pinch off, singularity}

\noindent\thanks{\em{ MSC Classification:  35Q35, 35Q86.}}

\maketitle

\section{Introduction}

In the Hele-Shaw problem, two immiscible viscous fluids are placed in a narrow gap between two plates. Neglecting variations transversal to the plates, the problem is modeled by two dimensional incompressible and irrotational hydrodynamical equations. In the presence of surface tension, boundary conditions connect the mean curvature of the interface separating the two fluids to the pressure jump. The fluids form characteristic patterns ~\cite{SaffmanTaylor58}.  The zero surface tension limit has been associated in {the} physical literature to Laplacian growth~\cite{wieg}, integrable systems~\cite{MWWZ00}, and to diffusion-limited aggregation~\cite{wit,Vicsek84,Halsey00}. A dimension reduction, using lubrication approximation, leads to degenerate fourth order parabolic equations in one space dimension. The original derivations are related to wetting, thin films, and the triple junction between two fluids and a solid substrate (see \cite{DeGennes85, SmythHill88,OronDavisBankoff97, BEIMR09} and \cite{DussanDavis74,Greenspan78,Hocking81}). Some of the mathematical papers related to the spreading of thin films and bubbles are \cite{BernisFriedman90,BertozziPugh96,BertozziPugh98,GiacomelliOtto03,GiacomelliKnupferOtto08,BernoffWitelski02,Knupfer15,KnupferMasmoudi15,GnannIbrahimMasmoudi17}.

Our focus in this paper is on singularity formation. In this context, a one dimensional model for topology change in a Hele-Shaw set-up was discussed in \cite{CDGKSZ93}. The equation describes the evolution of the thickness $h$ of a thin neck of fluid. The paper~\cite{CDGKSZ93} derives the evolution equation of $h$ using lubrication approximation, describes its variational dissipative structure and its steady states, and discusses the possibility of reaching zero thickness in finite or infinite time. This singularity formation was investigated theoretically and numerically in quite a number of studies. In \cite{DGKZ93} a first numerical evidence of finite time pinch off was obtained. Systematic expansions and numerical results for a wider range of problems indicated finite time pinch off and velocity singularities in \cite{GoldsteinPesciShelley93}. A family of equations was considered in \cite{BBDK94}, numerical results supporting  selfsimilar behavior were obtained, and finite or infinite time pinch off was asserted. In  \cite{EggersDupont94} numerical studies and physical arguments compared lubrication approximation equations to careful experiments of drop formation (\cite{ChaudharyRedekopp89,ChaudharyMaxworthy80,PeregrineShokerSymon90}). In \cite{CBEN99} experiment and scaling near equal viscosities are accompanied by studies of the dependence of the breaking rate and shape of the drop on the viscosity ratio. A comprehensive survey of selfsimilar behaviors is given in \cite{EggersFontelos08}, including a discussion of the pinch off scenarios presented on the basis of numerical evidence in \cite{AlmgrenBertozziBrenner96}. 
 
In spite of the remarkable success of the dramatically reduced model obtained by lubrication approximation (see \eqref{dp0}--\eqref{bc} below)  to quantitatively describe experimental reality, as evidenced by numerical studies and theoretical investigations, the finite time pinch off has yet to be rigorously proved.  In this paper, we prove an old conjecture of one of us, recorded in \cite{EggersDupont94}, that as long as $h>0$ no singularity can arise from smooth and positive initial data  (see Theorem~\ref{Cauchy} below). We also prove that indeed, as suggested in \cite{CDGKSZ93} and in \cite{BBDK94}, global in time behavior leads to pinch off, just as finite time singularities do (see Theorem~\ref{coro:stability} below). To the best of our knowledge, this is the first rigorous proof for the emergence of a pinching singularity in the one dimensional Hele-Shaw model of~\cite{CDGKSZ93}.

The equation we study (\cite{CDGKSZ93})
 \bq\label{dp0}
 \partial_th(x, t)+\partial_x(h \, \partial_x^3h)(x, t)=0, \quad  (x, t)\in  (-1, 1) \times (0, \infty),
 \eq
is supplemented with boundary conditions
\bq\label{bc}
\begin{aligned}
h(\pm 1, t)=1,\quad t>0,\\
\partial_x^2h(\pm 1, t)=P,\quad t>0.
\end{aligned}
\eq
Here, $P>0$ is the pressure of the less viscous fluid and $h\ge 0$ is half of the width of the thin neck.
The equation has a steady solution $h_P$, given by \eqref{eq:hp:def} below, which is unique in a class of relatively smooth solutions (see Proposition \ref{unique:hP}). This steady solution has a neck singularity if $P>2$ (a segment where it is identically zero). The main result of the paper is to prove convergence to this solution in finite or infinite time. In order to do so we start by obtaining a strong enough local existence result. We exploit further the structure of the equation to pass to limit of infinite time, and prove that the limits have to be formed from pieces of parabolas and straight lines where they do not vanish. Then we prove that the only possible valid limit there is $h_P$.

We denote $I=(-1, 1)$ and for any $T\in (0, \infty]$, we define
\[
X(T)=\left\{f\in L^\infty\big([0, T]; H^3(I)\big): {\p_x^3f \in L^2\big([0, T]; H^2(I)\big)} \right\}
\]
 endowed with its natural norm. When $T$ is finite, by interpolation  $X(T)$ is equivalent to the space
 \[
L^\infty\big([0, T]; H^3(I)\big)\cap L^2\big([0, T]; H^5(I)\big).
\]
\begin{theo}[\bf Local existence of strong solutions and continuation criterion]
\label{Cauchy}
 Let $h_0\in H^3(I)$ satisfy the boundary conditions \eqref{bc} and {assume} $h_{0, m}:=\inf_I h_0>0$. There exists a positive finite time $T$, depending only on {$P$}, $\Vert h_0\Vert_{H^3(I)}$ and $h_{0, m}$, such that problem \eqref{dp0}-\eqref{bc} with initial data $h_0$ has a unique solution $h\in X(T)$ with $\inf_{I\times [0, T]}h>0$.
 
Moreover, there exists an increasing function $\cF: {\Rr^+\times \Rr^+}\to \Rr^+$ depending only on $P$ such that
\bq\label{Cauchy:apriori}
\Vert h\Vert_{X(T)}\le \cF\big(\frac{1}{\inf_{I\times [0, T]}h}, \Vert h_0\Vert_{H^3}\big)
\eq
Therefore, $h$ blows up at a finite time $T^*$ if and only if 
\bq\label{blowup}
\inf_{x\in I} h(t, x)\searrow 0~\text{as}~t\nearrow T^*.
\eq
Furthermore, if we denote 
\bq
D(h(t))=\int_I h \, |\p_x^3h|^2(x, t)dx
\eq
 then 
 \bq\label{D:L1}
 \int_0^T D(h(t)) dt \le C(\Vert h_0\Vert_{H^3(I)}+1)
 \eq
  for some $C>0$ depending only on $P$, and
\bq\label{identity:D}
D(h(t))=D(h(0))+\int_0^t\Big(\int_I \p_th|\p_x^3h|^2(x, s)dxds-2\int_I |\p_x\p_th|^2(x, s)dx\Big)ds
\eq
 for {\it a.e.} $t \in [0, T]$.
 \end{theo}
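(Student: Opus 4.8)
The plan is to treat \eqref{dp0} as a quasilinear fourth order parabolic equation, uniformly parabolic as long as $h$ stays bounded below. First I would homogenize the boundary conditions by writing $h = g + w$ with $g(x) = 1 + \tfrac{P}{2}(x^2-1)$, so that $g(\pm1)=1$, $\partial_x^2 g = P$ and $\partial_x^3 g = 0$; then $w$ solves $\partial_t w + \partial_x\big((g+w)\partial_x^3 w\big)=0$ with the homogeneous hinged conditions $w(\pm1)=0$, $\partial_x^2 w(\pm1)=0$. For the construction I would use a Galerkin scheme in the eigenbasis of $\partial_x^4$ subject to these conditions (equivalently $\sin\tfrac{k\pi(x+1)}{2}$), which builds the boundary data into the approximation; the frozen coefficient operator $w\mapsto\partial_x\big(a\,\partial_x^3 w\big)$ with $a=g+w\ge h_{0,m}/2>0$ has principal symbol $a\xi^4$ and the hinged conditions are complementing, so the linear problem is uniformly parabolic. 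The a priori estimates below, being uniform along the truncation, give compactness and hence a solution, while uniqueness follows from the same estimates applied to the difference of two solutions. Since $X(T)\hookrightarrow C([0,T];C^2(\bar I))$ and $h_0\ge h_{0,m}>0$, positivity persists on a short interval, and continuation is controlled by $\inf h$ alone.

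The heart of the matter is the a priori estimate \eqref{Cauchy:apriori}, which I would obtain from an $H^3$ energy estimate producing exactly $X(T)=L^\infty H^3\cap L^2 H^5$. Differentiating $\tfrac12\|\partial_x^3 w\|_{L^2}^2$ and using $\partial_t w=-\partial_x(h\partial_x^3 w)$, the leading dissipative term is $-\int_I h\,|\partial_x^5 w|^2\,dx$, which controls $\|w\|_{H^5}^2$ after paying a factor $1/\inf h$ --- precisely the source of the dependence of $\cF$ on $1/\inf_{I\times[0,T]}h$. The lower order terms are handled by interpolation between $H^3$ and $H^5$ together with the positivity of $h$. \textbf{The main obstacle} is the control of the boundary terms generated by the repeated integrations by parts: besides $w(\pm1)=0$ and $\partial_x^2 w(\pm1)=0$, I would use the two relations obtained by differentiating the boundary conditions in time, namely $\partial_x(h\partial_x^3 w)\big|_{\pm1}=0$ (from $\partial_t w|_{\pm1}=0$) and $\partial_x^3(h\partial_x^3 w)\big|_{\pm1}=0$ (from $\partial_x^2\partial_t w|_{\pm1}=0$); these kill the dangerous top order boundary contributions, and the residual ones are absorbed into the $H^5$ dissipation via a trace inequality $|\partial_x^3 w(\pm1)|\lesssim \|\partial_x^3 w\|_{L^2}^{1/2}\|w\|_{H^4}^{1/2}$ and Young's inequality. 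Once \eqref{Cauchy:apriori} is established, the blow-up criterion \eqref{blowup} is immediate: its right hand side is finite as long as $\inf h>0$, so the solution extends past any time at which $\inf h$ stays bounded away from $0$.

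For the dissipation bound \eqref{D:L1} I would not use the high order estimate at all, but identify the exact Lyapunov structure. Testing \eqref{dp0} with $-\partial_x^2 h$ and integrating by parts (using $\partial_t h|_{\pm1}=0$) gives
\[
\frac{d}{dt}\,\tfrac12\!\int_I|\partial_x h|^2\,dx = -D(h) + P\big(\partial_x^3 h(1,t)-\partial_x^3 h(-1,t)\big),
\]
while integrating \eqref{dp0} in $x$ and using $h(\pm1)=1$ gives the mass law
\[
\frac{d}{dt}\!\int_I h\,dx = -\big(\partial_x^3 h(1,t)-\partial_x^3 h(-1,t)\big).
\]
Adding $P$ times the second identity to the first cancels the boundary flux exactly, so that $\cE(h):=\tfrac12\|\partial_x h\|_{L^2}^2+P\int_I h\,dx$ satisfies $\tfrac{d}{dt}\cE(h)+D(h)=0$. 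Since $P>0$ and $h\ge0$ force $\cE\ge0$, integrating in time yields $\int_0^T D(h)\,dt\le \cE(h_0)$, which is bounded by a constant depending only on $P$ and $\|h_0\|_{H^3}$ as in \eqref{D:L1} and, crucially, is independent of $\inf h$; this uniformity is what later permits passage to the infinite time limit.

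Finally, the identity \eqref{identity:D} is a direct computation. Differentiating $D(h)=\int_I h|\partial_x^3 h|^2\,dx$ gives $\tfrac{d}{dt}D=\int_I\partial_t h\,|\partial_x^3 h|^2\,dx+2\int_I h\,\partial_x^3 h\,\partial_x^3\partial_t h\,dx$. Writing $u=\partial_t h$, which satisfies $u(\pm1)=0$, $\partial_x^2 u(\pm1)=0$ and $\partial_x(h\partial_x^3 h)=-u$, two integrations by parts in the last term --- first moving derivatives off $u$ using $\partial_x^2 u|_{\pm1}=0$, then once more using $u|_{\pm1}=0$ --- turn it into $2\int_I u\,\partial_x^2 u\,dx=-2\int_I|\partial_x u|^2\,dx$, with all boundary terms vanishing. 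Integrating in $t$ gives \eqref{identity:D}. This last identity would be carried out first for the Galerkin approximations and then passed to the limit, which is why it is asserted only for a.e.\ $t\in[0,T]$.
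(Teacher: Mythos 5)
Your low-order arguments are correct and coincide with the paper's: the exact cancellation of the boundary flux between $\frac{d}{dt}\frac12\Vert\partial_xh\Vert_{L^2}^2$ and $P\frac{d}{dt}\int_Ih$ is precisely how \eqref{D:L1} is obtained, and your derivation of \eqref{identity:D}, using $\partial_th(\pm1)=0$ and $\partial_x^2\partial_th(\pm1)=0$ to integrate by parts without boundary contributions, is the computation behind the paper's identity \eqref{identity:w}. The gap is in the step you yourself flag as the main obstacle: the $H^3$ energy estimate does not close as described. Write $F=h\,\partial_x^3h$, so that $\partial_th=-\partial_xF$ and, as you correctly note, $\partial_xF(\pm1)=\partial_x^3F(\pm1)=0$. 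In $\frac{d}{dt}\frac12\Vert\partial_x^3h\Vert_{L^2}^2=-\int_I\partial_x^3h\,\partial_x^4F\,dx$, the first integration by parts produces the harmless term $\partial_x^3h\,\partial_x^3F\vert_{\pm1}=0$, but the second produces $\partial_x^4h\,\partial_x^2F\vert_{\pm1}$, and $\partial_x^2F(\pm1)$ contains $h(\pm1)\,\partial_x^5h(\pm1)=\partial_x^5h(\pm1)$. This pairs the traces of $\partial_x^4h$ and $\partial_x^5h$; the trace of $\partial_x^5h$ is not controlled by $\Vert h\Vert_{H^5}$ (it needs half a derivative more), so the term cannot be absorbed into the dissipation $\int_Ih|\partial_x^5h|^2$, and the trace inequality you invoke, which concerns $\partial_x^3w$, is irrelevant to it. No rearrangement of the integrations by parts escapes this: one always ends up evaluating a fifth-order derivative at $\pm1$ against a third- or fourth-order one with a coefficient $h(\pm1)=1$ or $\partial_xh(\pm1)\neq0$. (A sine-basis Galerkin truncation does not help either: although all even derivatives of the approximants vanish at $\pm1$, the commutator term still leaves $\partial_x^5w_N(\pm1)\,\partial_xa(\pm1)\,\partial_x^3w_N(\pm1)$.) This is exactly the difficulty the paper states at the beginning of its $H^3$ step: a direct estimate for $\partial_x^3$ generates boundary terms up to order five that are not prescribed.

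The device you are missing, and the one genuinely nontrivial idea in the paper's proof of this theorem, is to run the top-order estimate on $F$ itself in the weighted space $L^2(dx/h)$ rather than on $\partial_x^3h$ in $L^2$. Since $\partial_tF=\frac{\partial_th}{h}F-h\,\partial_x^4F$, one finds $\frac12\frac{d}{dt}\int_I F^2/h=\frac12\int_I\frac{\partial_th}{h^2}F^2-\int_IF\,\partial_x^4F$, and now $\int_IF\,\partial_x^4F=\int_I|\partial_x^2F|^2$ with \emph{no} boundary terms at all, because the two integrations by parts only ever evaluate $\partial_xF$ and $\partial_x^3F$ at $\pm1$, both of which vanish by the time-differentiated boundary conditions. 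The quantity $\int_IF^2/h=\int_Ih|\partial_x^3h|^2$ is equivalent to $\Vert\partial_x^3h\Vert_{L^2}^2$ while $h$ is bounded below, and $\Vert\partial_x^2F\Vert_{L^2}$ controls $\Vert\partial_x^5h\Vert_{L^2}$ modulo lower-order terms, so this weighted estimate is what actually delivers \eqref{Cauchy:apriori}; without it your scheme does not produce the a priori bound on which uniqueness, continuation, and the blow-up criterion all rest.
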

\begin{rema}
We observe that the right-hand side of \eqref{Cauchy:apriori} does not explicitly depend on $T$. This fact is used in the proof of Theorem \ref{coro:stability}.
\end{rema}
The problem \eqref{dp0}-\eqref{bc} has the energy 
\[
E(h(t))=\mez\int_I|\partial_x h(x, t)|^2dx+P\int_I h(x, t)dx
\]
which dissipates according to
\[
\frac{d}{dt} E(h(t))=-D(h(t))=-\int_I h(x, t)|\partial_x^3h(x, t)|^2dx\le 0
\]
(see the proof of \eqref{energy:l} below). 

Define the steady solution $h_P$  by
\begin{subequations}
\label{eq:hp:def}
\begin{align}
h_P(x)&=\frac{P}{2}(x^2-1)+1, && P \in (0,2], \\
h_P(x) &=
\begin{cases}
\frac{P}{2}(|x|-x_P)^2,&\quad x_P\le |x|\le 1,\\
0,&\quad |x|<x_P, 
\end{cases} && P > 2,
\end{align}
\end{subequations}
where $x_P=1-\sqrt{\frac{2}{P}}$ for $P>2$. The energy dissipation rate $D(h)$ vanishes for $h=h_P$. When $P\in (0, 2]$, $h_P$ is a smooth, nonnegative solution of \eqref{dp0}-\eqref{bc}. When $P>2$, $h_P\in W^{2, \infty}(I)$ and has a jump of its second derivative at $\pm x_P$. In the second case, $h_P$ is a weak solution in the sense of the following definition.
\begin{defi}[\bf Weak solution]
\label{weak}
We say that a nonnegative function $h\in L^2([0, T]; H^2(I))$ is a weak solution of \eqref{dp0}-\eqref{bc} on $[0, T]$  if  there exists $\delta>0$ such that  for {\it a.e.} $t\in [0, T]$, $h(t)\in C^2([-1, -1+\delta])\cap C^2([1-\delta, 1])$, $h(t)$ verifies the boundary conditions \eqref{bc}, and  
\bq\label{weakform}
\int_0^T \!\!\! \int_I h\partial_t\varphi dxdt-\int_0^T\!\!\!\int_I\big(h\partial_x^2h-\mez|\partial_xh|^2\big)\partial_x^2\varphi dxdt=0
\eq
for all $\varphi\in C^\infty_0(I\times (0, T))$.
\end{defi}
The preceding definition is based on the identity 
\bq\label{id:nonl}
\partial_x(h\partial_x^3h)=\partial_x^2\big(h\partial_x^2h-\mez|\partial_xh|^2\big).
\eq
\begin{rema}[\bf Global weak solutions]
We prove in  {Theorem~\ref{theo:weak}} of the appendix that for any nonnegative $H^1$ data  {that is smooth near $\pm 1$ and satisfies the boundary condition \eqref{bc}}, there exists a global weak solution to \eqref{dp0}-\eqref{bc}. Related results for different boundary conditions can be found in \cite{BernisFriedman90, BertozziPugh96, BertozziPugh98}.
\end{rema}
The next proposition implies that $h_P$ has the least energy among all weak solutions.
\begin{prop}[\bf Energy minimizer]\label{prop:leaste}
 For any nonegative function $h\in H^1(I)$ taking value $1$ at $\pm 1$ we have $E(h)\ge E(h_P)$. Moreover, $E(h)=E(h_P)$ if and only if $h=h_P$. 
\end{prop}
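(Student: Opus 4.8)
The plan is to obtain this inequality by a direct convexity comparison rather than through Euler--Lagrange machinery. Since $E$ is convex on the convex admissible set $\{h\in H^1(I): h\ge 0,\ h(\pm 1)=1\}$ and $h_P$ belongs to this set (one checks $h_P\ge 0$ and $h_P(\pm 1)=1$ directly from \eqref{eq:hp:def}), it suffices to expand $E$ around $h_P$ and show the first variation is signed in all admissible directions. Concretely, I would set $g=h-h_P\in H^1(I)$, note $g(\pm 1)=0$ because both $h$ and $h_P$ equal $1$ at the endpoints, and compute
\bq
E(h)-E(h_P)=\mez\int_I|\partial_x g|^2\,dx+\int_I\partial_x h_P\,\partial_x g\,dx+P\int_I g\,dx.
\eq
The quadratic term is manifestly nonnegative, so everything reduces to controlling the terms linear in $g$.

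The second step is to integrate the cross term by parts. Since $h_P\in W^{2,\infty}(I)$ is only piecewise smooth when $P>2$, I would do this piecewise on the three intervals $(-1,-x_P)$, $(-x_P,x_P)$, $(x_P,1)$ rather than globally. On the middle interval $\partial_x h_P\equiv 0$, so it contributes nothing; on each outer interval $\partial_x^2 h_P=P$. The crucial point is that the interior boundary terms at $\pm x_P$ vanish: the construction of $h_P$ enforces the smooth-fit condition $\partial_x h_P(\pm x_P)=0$, so no contribution (in particular, no Dirac mass) arises at the free boundary, while the endpoint terms at $\pm 1$ vanish because $g(\pm 1)=0$. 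This yields
\bq
\int_I\partial_x h_P\,\partial_x g\,dx=-P\int_{\{x_P<|x|<1\}} g\,dx,
\eq
and hence
\bq
E(h)-E(h_P)=\mez\int_I|\partial_x g|^2\,dx+P\int_{\{|x|\le x_P\}} g\,dx.
\eq
(For $0<P\le 2$ there is no free boundary: $\partial_x^2 h_P\equiv P$ on all of $I$ and $h_P\ge 0$, so the same integration by parts, now without an interior kink, gives simply $E(h)-E(h_P)=\mez\int_I|\partial_x g|^2\,dx$, with the contact-set term absent.)

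The third step is where the nonnegativity hypothesis enters. On the contact set $\{|x|\le x_P\}$ one has $h_P\equiv 0$, so there $g=h-h_P=h\ge 0$; consequently $P\int_{\{|x|\le x_P\}} g\,dx\ge 0$, both terms on the right are nonnegative, and $E(h)\ge E(h_P)$ follows. For the equality case, $E(h)=E(h_P)$ forces $\int_I|\partial_x g|^2=0$, hence $\partial_x g=0$ almost everywhere; combined with $g(\pm 1)=0$ this gives $g\equiv 0$, i.e. $h=h_P$.

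I expect the only delicate point to be the vanishing of the free-boundary terms at $\pm x_P$ in the integration by parts. This is precisely the $C^1$ matching $\partial_x h_P(\pm x_P)=0$ built into the definition of $h_P$, and verifying it, together with confirming admissibility of $h_P$, is the conceptual heart of the argument; the remaining computations are routine.
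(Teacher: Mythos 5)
Your proof is correct and follows essentially the same route as the paper: expand $E$ around $h_P$, integrate the cross term by parts using $\partial_x^2 h_P=P$ on $\{h_P>0\}$ and $\partial_xh_P=0$ on the contact set, and invoke the nonnegativity of $h$ where $h_P$ vanishes. The only cosmetic difference is that you perform the integration by parts piecewise across $\pm x_P$, while the paper does it globally, which is equally justified since $h_P\in W^{2,\infty}(I)$ has a Lipschitz first derivative and hence no singular part in $\partial_x^2h_P$.
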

 {In order to prove the finite or infinite time pinch off, we show that a sequence of functions with bounded energy $E$ and vanishing energy dissipation rate $D$ converges weakly to the energy minimizer $h_p$.}
\begin{theo}[\bf Relaxation to energy minimizer]
\label{theo:relax}
 Let $(h_n)$ be sequence of nonnegative $H^3(I)$ functions satisfying \eqref{bc}. Assume that $(h_n)$ is uniformly bounded in $H^1(I)$ and $D(h_n)\to 0$. Then we have $h_n\rightharpoonup h_P$ in $H^1(I)$ and $h_n\to h_P$ in $H^3_{loc}(\{x: h_P(x)>0\})$. When $P\in (0, 2)$, $h_n\to h_P$ in $H^3(I)$.
\end{theo}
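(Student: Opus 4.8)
The plan is to show that every subsequential weak-$H^1$ limit of $(h_n)$ equals $h_P$; since the limit is then unique this yields $h_n\rightharpoonup h_P$ in $H^1(I)$. From the uniform $H^1(I)$ bound I extract a subsequence with $h_n\rightharpoonup h_*$ in $H^1(I)$, and since $H^1(I)\hookrightarrow C(\overline I)$ compactly in one dimension, $h_n\to h_*$ uniformly; hence $h_*\ge 0$ and $h_*(\pm1)=1$. The dissipation hypothesis localizes as follows: on any compact $K\subset\{h_*>0\}$ uniform convergence gives $h_n\ge c_K>0$ for large $n$, so $\int_K|\partial_x^3 h_n|^2\le c_K^{-1}D(h_n)\to 0$. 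Testing $\partial_x^3 h_n$ against $C_0^\infty$ functions and passing to the limit yields $\partial_x^3 h_*=0$ on $\{h_*>0\}$, so $h_*$ is a quadratic polynomial on each connected component of its positivity set; the interior interpolation bound $\|f\|_{H^3(K')}\lesssim\|f\|_{L^2(K)}+\|\partial_x^3 f\|_{L^2(K)}$ then upgrades this to $h_n\to h_*$ in $H^3_{loc}(\{h_*>0\})$. Near $x=\pm1$, where $h_*=1$, the same $L^2$ bound for $\partial_x^3 h_n$ together with $\partial_x^2 h_n(\pm1)=P$ forces $\partial_x^2 h_*=P$ on the two boundary components.

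The heart of the argument is to show that $h_*$ meets zero only tangentially, i.e. at every interior zero the adjacent quadratic has a double root. Suppose instead that on a positivity component abutting an interior zero $x_0$ the slope does not vanish, say $\partial_x h_*(x_0^+)=b\ne 0$. Using the $C^2_{loc}$ convergence on that component I select (by a diagonal argument over points $x_k\downarrow x_0$) a sequence $y_n\to x_0$ with $\partial_x h_n(y_n)\to b$, $h_n(y_n)\to 0$, and $\partial_x^2 h_n(y_n)$ bounded. Let $p_n<y_n$ be the rightmost critical point of $h_n$ to the left of $y_n$ (a local minimum, so $\partial_x h_n(p_n)=0$, $\partial_x^2 h_n(p_n)\ge0$, $h_n(p_n)\le h_n(y_n)\to0$), and set $R_n=[p_n,y_n]$. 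Integrating by parts,
\[
\int_{R_n}h_n\,\partial_x^3 h_n\,dx=\Big[h_n\,\partial_x^2 h_n-\mez(\partial_x h_n)^2\Big]_{p_n}^{y_n}= -\mez b^2+o(1)-h_n(p_n)\partial_x^2 h_n(p_n),
\]
and since the last term is nonnegative this integral is $\le -\tfrac12 b^2+o(1)$. By Cauchy--Schwarz with weight $h_n$,
\[
D(h_n)\ge \int_{R_n} h_n\,|\partial_x^3 h_n|^2\,dx\ge\frac{\big(\int_{R_n}h_n\,\partial_x^3 h_n\big)^2}{\int_{R_n}h_n}\ge\frac{\big(\tfrac12 b^2-o(1)\big)^2}{\int_{R_n}h_n}\longrightarrow\infty,
\]
because $\int_{R_n}h_n\le|R_n|\,h_n(y_n)\to0$, contradicting $D(h_n)\to0$. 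I expect this dissipation blow-up at a transversal touchdown to be the main obstacle: the mechanism is dictated by the scaling $D(\mu h(\nu\,\cdot))=\mu^3\nu^5 D(h)$, which predicts the local divergence $D\gtrsim |b|^5/h_n(y_n)^2$, and the delicate point is to capture it rigorously, which the weight $h_n$ and the boundary term $-\tfrac12(\partial_x h_n)^2$ above accomplish cleanly.

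Granting tangential touchdown, the classification is elementary. A quadratic with a double root at an interior endpoint of a component is convex there, and a single quadratic cannot have double roots at two distinct points; hence no component can have two interior zeros, so every positivity component must reach $x=\pm1$. Thus there are at most two boundary components, on which $\partial_x^2 h_*=P$ and the double-root condition together with $h_*(\pm1)=1$ force the interior zero to sit at $x_P=1-\sqrt{2/P}$ and $h_*=\frac{P}{2}(|x|-x_P)^2$ there. For $P>2$ the two components are disjoint with $h_*\equiv0$ on $[-x_P,x_P]$, giving $h_*=h_P$; for $P\le 2$ the putative touchdowns at $\pm x_P$ (with $x_P\le0$) would overlap, so $\{h_*>0\}=I$ and $h_*=\frac{P}{2}(x^2-1)+1=h_P$.

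This identifies the limit, proving $h_n\rightharpoonup h_P$ in $H^1(I)$, and by the localization of the first paragraph $h_n\to h_P$ in $H^3_{loc}(\{h_P>0\})$. Finally, when $P\in(0,2)$ one has $h_P\ge c_0>0$ on all of $\overline I$, hence $h_n\ge c_0/2$ on $I$ for large $n$; then $\partial_x^3 h_n\to0$ in $L^2(I)$ globally, and combined with the fixed boundary data $h_n(\pm1)=1$ and $\partial_x^2 h_n(\pm1)=P$ this yields $\partial_x^2 h_n\to P$ in $H^1(I)$ and therefore $h_n\to h_P$ in $H^3(I)$.
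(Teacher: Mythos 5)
Your proposal is correct, and its architecture coincides with the paper's: extract a weak $H^1$ limit $h_*$ (uniform by compact embedding), localize the dissipation on $\{h_*>0\}$ to get $\partial_x^3h_*=0$ there plus local $H^3$ convergence (the paper's Lemma~\ref{lemm1:relax}), rule out transversal touchdown, and classify the surviving piecewise parabolas. The one place you genuinely deviate is the touchdown step (the paper's Lemma~\ref{lemm2:relax}). Both arguments rest on the identity $\int h_n\partial_x^3h_n=\bigl[h_n\partial_x^2h_n-\frac{1}{2}(\partial_xh_n)^2\bigr]$ evaluated between a critical point of $h_n$ (where the boundary contribution has a favorable sign) and a point near the touchdown (where it tends to $-\frac{1}{2}b^2<0$); but the paper integrates over an essentially fixed window $[x_n,x_0+\eps]$, with $x_n$ the global minimum of $h_n$, and contradicts the fact that $\int_{I'}h_n\partial_x^3h_n\to 0$ on any subinterval (which follows from $D(h_n)\to0$, Cauchy--Schwarz, and the uniform $L^\infty$ bound), whereas you integrate over the shrinking window $[p_n,y_n]$ with $y_n$ sliding into the touchdown via a diagonal selection, and apply Cauchy--Schwarz with the weight $h_n$, whose integral over that window vanishes because $h_n$ is monotone between its rightmost critical point $p_n$ and $y_n$. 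Your version forces $D(h_n)\to\infty$ and yields the quantitative lower bound $D(h_n)\gtrsim b^4/h_n(y_n)$, which is closer in spirit to the scaling heuristic you mention; the price is the extra bookkeeping (the diagonal choice of $y_n$, the existence of $p_n$ with $\partial_xh_n(p_n)=0$ and $\partial_x^2h_n(p_n)\ge0$), all of which does go through for large $n$ since $\partial_xh_n(y_n)\to b>0$ and $h_n(\pm1)=1$ pin down an interior minimum. The concluding classification is terser than the paper's (which argues separately that a middle component for $P>2$ would have to be a line or a parabola reaching $-1$), but your double-root counting is an equivalent and slightly cleaner way to reach the same conclusion.
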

As a corollary of Theorems \ref{Cauchy} and \ref{theo:relax}  we have  {the main result of this paper}:
\begin{theo}[\bf Stability for $P<2$ and pinch off for $P>2$] \label{coro:stability} \mbox{ } \\
{\bf Part 1.} If $P\in (0, 2)$, then $h_P$ is asymptotically stable in $H^1(I)$. More precisely, there exist $\delta, c, C>0$  {depending only on $P$} such that the following holds. If $h_0\in H^3(I)$ satisfies $\inf_I h_0>0$ and  $\Vert h_0-h_P\Vert_{H^1}\le \delta$ then $h\in X(T)$ for any $T>0$, $\inf_{I\times \Rr^+}h\ge c$ and 
\[
\Vert h(t)-h_P\Vert_{H^1(I)}\le C\Vert h_0-h_P\Vert_{H^1(I)}\exp(- {c} t)\quad\forall t>0.
\]
Moreover,  $h(t)\to h_P$ in $H^3(I)$ as $t\to \infty$. 

{\bf Part 2.} If $P\ge 2$, then starting from any positive $h_0\in H^3(I)$, the solution $h$ of \eqref{dp0}-\eqref{bc}, constructed in Theorem \ref{Cauchy}, pinches off at either finite or infinite time. In the latter case, by Theorem \ref{theo:relax}, $h(t_n)\rightharpoonup h_P$ in $H^1(I)$ and $h(t_n)\to h_P$ in $H^3_{loc}(\{x:h_P(x)>0\})$ for some $t_n\to \infty$.
\end{theo}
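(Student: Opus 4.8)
The plan is to use the energy $E$ as a Lyapunov functional, the dissipation $D$ to force the decay, and to import compactness from Theorem~\ref{theo:relax} together with the continuation criterion of Theorem~\ref{Cauchy}. I treat the two parts in order.

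\textbf{Part 1.} Fix $P\in(0,2)$, so that $h_P$ is smooth and strictly positive with $\inf_I h_P=1-\frac{P}{2}>0$. Setting $g=h-h_P$, the boundary conditions \eqref{bc} give $g(\pm1)=0$ and $\partial_x^2 g(\pm1)=0$, while $\partial_x^2 h_P\equiv P$ and $\partial_x^3 h_P\equiv 0$. An integration by parts then yields the exact coercivity identity $E(h)-E(h_P)=\frac{1}{2}\Vert\partial_x g\Vert_{L^2(I)}^2$, which is comparable to $\Vert g\Vert_{H^1(I)}^2$ by the Poincar\'e inequality (using $g(\pm1)=0$). For the dissipation I prove a spectral-gap bound: if $\inf_I h\ge c_0>0$ then, since $\partial_x^3 h=\partial_x^3 g$, one has $D(h)=\int_I h|\partial_x^3 h|^2\,dx\ge c_0\Vert\partial_x^3 g\Vert_{L^2}^2$; and because $g(\pm1)=0$ and $\partial_x^2 g(\pm1)=0$, the function $u=\partial_x g$ has zero mean and satisfies homogeneous Neumann conditions, so $\Vert\partial_x^3 g\Vert_{L^2}^2=\Vert\partial_x^2 u\Vert_{L^2}^2\ge(\tfrac{\pi}{2})^4\Vert u\Vert_{L^2}^2=(\tfrac{\pi}{2})^4\Vert\partial_x g\Vert_{L^2}^2$. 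Combining the two bounds gives $D(h)\ge 2c\,(E(h)-E(h_P))$ on the set where $\inf_I h\ge c_0$, for an explicit $c>0$.

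The smallness is propagated by a continuity argument. Since $H^1(I)\hookrightarrow C^0(I)$, choosing $\delta$ small guarantees that $\Vert h_0-h_P\Vert_{H^1}\le\delta$ keeps $\inf_I h(t)\ge c_0:=\frac12(1-\frac{P}{2})$ on the maximal interval where that lower bound holds. There the differential inequality $\frac{d}{dt}(E-E(h_P))=-D\le -2c\,(E-E(h_P))$ and Gr\"onwall give $E(h(t))-E(h_P)\le(E(h_0)-E(h_P))e^{-2ct}$, hence $\Vert h(t)-h_P\Vert_{H^1}\le C\Vert h_0-h_P\Vert_{H^1}e^{-ct}$ by the coercivity identity. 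For $\delta$ small this decay keeps $\Vert h(t)-h_P\Vert_{H^1}$ below the threshold for $\inf_I h\ge c_0$ at all times, so positivity is never lost; the continuation criterion of Theorem~\ref{Cauchy} then forces $T=\infty$ and $\inf_{I\times\mathbb{R}^+}h\ge c_0$, giving the claimed global bound and decay. Finally, the uniform positivity and \eqref{Cauchy:apriori} yield a uniform-in-time $H^3$ bound; since $\int_0^\infty D\,dt<\infty$ and, via \eqref{identity:D} and the uniform bounds, $t\mapsto D(h(t))$ converges, that limit must be $0$, so $D(h(t))\to0$. As $P<2$, applying Theorem~\ref{theo:relax} along an arbitrary sequence $t_n\to\infty$ gives $h(t_n)\to h_P$ in $H^3(I)$; since the sequence is arbitrary, $h(t)\to h_P$ in $H^3(I)$.

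\textbf{Part 2.} Suppose, for contradiction, that the solution does not pinch off. This means precisely that it is global and $m:=\inf_{I\times[0,\infty)}h>0$ (a finite-time breakdown would, by the criterion \eqref{blowup} of Theorem~\ref{Cauchy}, already be a pinch off). By Proposition~\ref{prop:leaste} the non-increasing energy $E(h(t))$ is bounded below by $E(h_P)$, so $\int_0^\infty D(h(t))\,dt=E(h_0)-\lim_{t\to\infty}E(h(t))<\infty$ and there is a sequence $t_n\to\infty$ with $D(h(t_n))\to0$. The energy bound also controls $\Vert h(t)\Vert_{H^1}$ uniformly (it bounds $\Vert\partial_x h\Vert_{L^2}$, and $h(\pm1)=1$ gives the remaining $L^2$ control). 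Thus $(h(t_n))$ satisfies the hypotheses of Theorem~\ref{theo:relax}, so $h(t_n)\rightharpoonup h_P$ in $H^1(I)$; by the compact embedding $H^1(I)\hookrightarrow\hookrightarrow C^0(I)$ this is uniform convergence, whence $\inf_I h(t_n)\to\inf_I h_P=0$ because $h_P$ vanishes somewhere when $P\ge2$. This contradicts $\inf_I h(t_n)\ge m>0$. Therefore the solution pinches off in finite or infinite time, and in the infinite-time case the same sequence and Theorem~\ref{theo:relax} furnish the stated weak $H^1$ and local $H^3$ convergence.

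The step I expect to be the main obstacle is the full (rather than subsequential) $H^3$ convergence in Part~1: the energy method controls only $\Vert h-h_P\Vert_{H^1}$ directly, so upgrading to $H^3$ hinges on showing $D(h(t))\to0$ pointwise in $t$. This requires using the identity \eqref{identity:D} together with the uniform $H^3$ bound to show that $t\mapsto D(h(t))$ actually converges, its increments being time-integrable, rather than merely that $\int_0^\infty D\,dt<\infty$; alternatively one may run a higher-order energy estimate exploiting the uniform parabolicity granted by $\inf_I h\ge c_0$. Everything else reduces to the coercivity identity, the spectral gap, and the continuity argument, which are routine once positivity is secured.
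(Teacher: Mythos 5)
Your proof is correct, and Part 1 follows essentially the same route as the paper: the exact coercivity identity $E(h)-E(h_P)=\tfrac12\Vert\p_x(h-h_P)\Vert_{L^2}^2$, the Poincar\'e spectral gap using $\p_x^2(h-h_P)(\pm1)=0$ and $\int_I\p_x(h-h_P)=0$, the continuity/bootstrap argument to preserve $\inf_I h\ge c_0$ and invoke the continuation criterion of Theorem \ref{Cauchy}, and finally the upgrade to $H^3$ convergence by showing $D(h)\in W^{1,1}(\Rr^+)$ via the identity \eqref{identity:D} and the uniform bound \eqref{Cauchy:apriori}. You correctly flag this last step as the delicate one; the paper carries it out exactly as you sketch, estimating $\int_I\p_th|\p_x^3h|^2$ and $\int_I|\p_x\p_th|^2$ in $L^1_t$ from the global $X$-bound granted by uniform positivity, and then concluding $D(h(t))\to0$ (Corollary 8.9 of Brezis) before applying Theorem \ref{theo:relax}.

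In Part 2 your contradiction step is genuinely different from, and somewhat more economical than, the paper's. Assuming $m=\inf_{I\times[0,\infty)}h>0$, you use $\int_0^\infty D\,dt<\infty$ (from the monotone, bounded-below energy, or directly from \eqref{D:L1}) to extract $t_n\to\infty$ with $D(h(t_n))\to0$, apply Theorem \ref{theo:relax} to get $h(t_n)\rightharpoonup h_P$ in $H^1$, hence uniform convergence by compact embedding, and contradict $m>0$ because $\inf_I h_P=0$ when $P\ge2$. The paper instead shows that under $\inf h\ge c_0$ the same spectral-gap estimate as in Part 1 forces exponential $H^1$ (hence uniform) convergence to the signed parabola $h_\infty(x)=\tfrac{P}{2}(x^2-1)+1$, which vanishes inside $I$, yielding the contradiction. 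Both arguments are valid; yours reuses the relaxation theorem that is needed anyway for the second assertion of Part 2, while the paper's gives a quantitative (exponential) rate of approach to the degenerate profile before the contradiction is reached. The only hygiene point in your version is to pick the times $t_n$ so that $h(t_n)\in H^3(I)$ (true for a.e.\ $t$ since $h\in X(T)$ for all $T$), which is harmless.
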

\begin{rema}
 When $P>2$, if $h$ is global in $X$, the bound \eqref{Cauchy:apriori} blows up since $h$ is pinched at infinite time. In particular, the bound for $h$ in $L^\infty([0, T]; H^3(I))$ blows up as $T\to \infty$. Nevertheless, along an unbounded sequence of times, $h$ converges to $h_P$ in $H^3_{loc}(\{x:h_P(x)>0\})$.
\end{rema}
{\begin{rema}
Assume that $h$ is a positive smooth solution of \eqref{dp0}-\eqref{bc} on $[0, T^*)$, $T^*\in (0, \infty)$, and that $\min_{x\in I} h(x, T^*)=0$. Let $x_m(t)$ be a  position of the minimum of $h$ in $x$ at time $t$ and denote $h_m(t)=h(x_m(t), t)$. Since $(\p_xh)(x_m(t), t)=0$, it is easy to see that
\[
\frac{d}{dt}\ln h_m(t)=-(\p_x^4h)(x_m(t), t)\quad\forall t\in [0, T^*).
\]
This implies
\[
\int_0^{T^*}(\p_x^4h)(x_m(t), t)dt=\infty.
\]
We also remark that in the derivation of model \eqref{dp0} (see \cite{CDGKSZ93}), the speed of the flow is given by $v=\p_x^3h$, and hence 
\[
\int_0^{T^*}(\p_xv)(x_m(t), t)dt=\infty.
\]
This is one kind of singularity occurring when $h$ touches $0$ in finite time. 
\end{rema} }

Throughout this paper, $\cF(\cdot, ...,\cdot)$ denotes nonnegative functions which are increasing in each argument. $\cF$ may change from line to line unless it is enumerated.
\section{A linear problem}
 Let $T$ be a positive real number and let  $g$ be a positive function satisfying 
\bq\label{cd:g}
g\in L^\infty([0, T]; H^2(I)),\quad \p_tg\in L^1([0, T]; L^\infty(I)).
\eq
 We study in this section the linear problem
\bq\label{dp:lin}
\begin{cases}
  \partial_th(x, t)+\partial_x(g\partial_x^3h)(x, t)=0, &\quad(x, t)\in I\times (0, T),\\
 h(\pm 1, t)=1,\quad \partial_x^2h(\pm 1, t)=P,&\quad t>0,\\
h(x, t)=h_0(x), &\quad t=0.
 \end{cases}
 \eq
We prove the following well-posedness result.
\begin{theo}[\bf Strong solution for the linear problem] \label{Cauchy:lin}
 For every $h_0\in H^3(I)$ satisfying the boundary conditions \eqref{bc}, there exists a unique solution $h\in X(T)$ to problem \eqref{dp:lin}. Moreover, denoting 
\[
c_0=\inf_{(x, t)\in I\times [0, T]}g(x, t)>0,
\]
then $h$ obeys the bounds
\begin{align}
\label{boundl1}
&\Vert h\Vert_{X(T)}\le \cF\big(\frac{1}{c_0},  \Vert g\Vert_{L^\infty([0, T]; H^2(I))}, \Vert \p_tg\Vert_{L^1([0, T]; L^\infty(I))}, \Vert h_0\Vert_{H^3}\big),\\
\label{boundl2}
&\Vert h\Vert_{L^\infty([0, T]; H^1(I))}\le C(1+\Vert h_0\Vert_{H^1(I)}),\\
\label{boundD:lin}
& {\int_0^T \!\!\! \int_I} g\, |\p_x^3h|^2\, dxdt \le C\big(1+\Vert h_0\Vert^2_{H^1(I)}\big).
\end{align}
Here, $\cF$ and $C$ depend only on $P$. Furthermore, denoting $w=g\, \p_x^3h$ we have that
\bq\label{identity:w}
\int_I \frac{w^2(x, t)}{g(x, t)}dx=\int_I \frac{w^2(x, 0)}{g(x, 0)}dx+\int_0^t \!\!\! \int_I \frac{\p_tg}{g^2}w^2(x, s)dxds-2\int_0^t\!\!\! \int_I |\p_x^2w|^2(x, s)dxds,
\eq
\bq\label{energy:w1}
\left\Vert \frac{w}{\sqrt g}(\cdot, t)\right\Vert_{L^2(I)}\le \left\Vert \frac{w}{\sqrt g}(\cdot, 0)\right\Vert_{L^2(I)}+\mez\int_0^t \left\Vert \frac{\p_tg}{g^\tdm}(\cdot, s)\right\Vert_{L^2(I)}\Vert w(\cdot, s)\Vert_{L^\infty(I)}ds ,
\eq
and
\bq\label{energy:w2}
\int_0^t\Vert \p_x^2w(\cdot, s)\Vert_{L^2}^2ds\le \mez \left\Vert \frac{w}{\sqrt g}(\cdot, 0)\right\Vert_{L^2(I)}^2+\mez\int_0^t \left\Vert \frac{\p_tg}{g^\tdm}(\cdot, s)\right\Vert_{L^2(I)}\left\Vert \frac{w}{\sqrt g}(\cdot, s)\right\Vert_{L^2(I)}\Vert w(\cdot, s)\Vert_{L^\infty(I)}ds
\eq
 {hold for {\it a.e.} $t\in [0, T]$.}
\end{theo}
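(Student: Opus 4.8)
The plan is to construct the solution by regularization and to read off every stated bound from two energy identities: a lower-order one for $h$ itself and the crucial higher-order one for the flux variable $w=g\,\partial_x^3 h$. First I would homogenize the boundary conditions. Setting $q(x)=\frac{P}{2}(x^2-1)+1$, which satisfies $q(\pm1)=1$, $q''\equiv P$ and $q'''\equiv 0$, the function $v=h-q$ solves the same equation \eqref{dp:lin} but with $v(\pm1)=0$, $\partial_x^2 v(\pm1)=0$ and data $h_0-q$. For existence I would mollify $g$ in $(x,t)$ to a smooth $g_\varepsilon$ with $\inf g_\varepsilon\ge c_0/2$ and $\|g_\varepsilon\|_{L^\infty_tH^2}$, $\|\partial_t g_\varepsilon\|_{L^1_tL^\infty}$ controlled, and regularize the data compatibly. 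For each fixed $\varepsilon$ the operator $\partial_x(g_\varepsilon\partial_x^3\,\cdot\,)$ is uniformly parabolic with smooth coefficients, so standard linear fourth-order parabolic theory (or a Galerkin scheme in a basis vanishing to the correct order at $\pm1$) produces a unique smooth solution $h_\varepsilon$, on which every integration by parts below is legitimate. All estimates are then derived uniformly in $\varepsilon$.

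The lower-order identity comes from testing the equation against the multiplier $P-\partial_x^2 h$. Because $\partial_x^2 h(\pm1)=P$ annihilates every boundary term, this produces the dissipation law $\frac{d}{dt}E(h)+\int_I g\,|\partial_x^3 h|^2\,dx=0$ with $E(h)=\mez\int_I|\partial_x h|^2+P\int_I h$. Since $h-1$ vanishes at $\pm1$, Poincaré's inequality shows that $E$ is comparable to $\|h\|_{H^1}^2$ up to additive constants depending only on $P$; integrating in time then yields \eqref{boundl2} and \eqref{boundD:lin} directly.

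The heart of the matter is the flux variable $w=g\,\partial_x^3 h$, which satisfies $\partial_t w+g\,\partial_x^4 w=\frac{\partial_t g}{g}\,w$. Differentiating the boundary conditions of $h$ in time and using $\partial_t h=-\partial_x w$ gives exactly $\partial_x w(\pm1)=0$ and $\partial_x^3 w(\pm1)=0$. Testing the $w$-equation against $w/g$ and integrating by parts twice, these two conditions are precisely what kills the boundary contributions $[\partial_x w\,\partial_x^2 w]$ and $[w\,\partial_x^3 w]$, producing the identity \eqref{identity:w}; bounding the middle term by Hölder as $\int_I\frac{\partial_t g}{g^2}w^2\le\|\frac{\partial_t g}{g^{\tdm}}\|_{L^2}\|\frac{w}{\sqrt g}\|_{L^2}\|w\|_{L^\infty}$ and integrating then gives \eqref{energy:w1} and \eqref{energy:w2}. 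Finally, since $\|\frac{w}{\sqrt g}\|_{L^2}=\|\sqrt g\,\partial_x^3 h\|_{L^2}$ and $c_0>0$, control of $w/\sqrt g$ in $L^\infty_tL^2$ bounds $\partial_x^3 h$ in $L^\infty_tL^2$, while $\partial_x^2 w\in L^2_tL^2$, after expanding $\partial_x^2 w=g\,\partial_x^5 h+(\text{lower order})$ and using $g\in L^\infty_tH^2$, upgrades to $\partial_x^5 h\in L^2_tL^2$; together with \eqref{boundl2} and interpolation this gives $h\in X(T)$ and the bound \eqref{boundl1}.

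The main obstacle is that \eqref{energy:w1}--\eqref{energy:w2} are not a priori bounds but coupled integral inequalities in which the factor $\|w\|_{L^\infty}$ must be reabsorbed. I would control it by Agmon/Gagliardo--Nirenberg, $\|w\|_{L^\infty}\les\|w\|_{L^2}+\|w\|_{L^2}^{\tq}\|\partial_x^2 w\|_{L^2}^{\uq}$, using $\partial_x w(\pm1)=0$ so that $\|\partial_x w\|_{L^2}\le\|w\|_{L^2}^{1/2}\|\partial_x^2 w\|_{L^2}^{1/2}$, bound $\|w\|_{L^2}\le\sqrt{\|g\|_{L^\infty}}\,\|\frac{w}{\sqrt g}\|_{L^2}$, and run \eqref{energy:w1} and \eqref{energy:w2} together: Young's inequality moves the $\|\partial_x^2 w\|_{L^2}^{\uq}$ factor into the $L^2_t$ dissipation supplied by \eqref{energy:w2}, after which $\|\partial_t g\|_{L^1_tL^\infty}$ furnishes the integrable weight for a Grönwall argument. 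This is where the hypotheses \eqref{cd:g} are consumed, and where the dependence of $\cF$ in \eqref{boundl1} on $1/c_0$, $\|g\|_{L^\infty_tH^2}$ and $\|\partial_t g\|_{L^1_tL^\infty}$ originates. Having the estimates uniformly in $\varepsilon$, I would pass to the limit by weak-$*$ compactness (linearity together with the strong convergence $g_\varepsilon\to g$ lets the equation pass), and obtain uniqueness by applying the energy law of the second paragraph to the difference of two solutions.
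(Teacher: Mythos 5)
Your overall architecture coincides with the paper's: homogenization by $q=\frac{P}{2}(x^2-1)+1$, mollification of $g$ and of the data with classical parabolic theory at fixed regularization, the $H^1$ dissipation law obtained from the multiplier $P-\partial_x^2h=-\partial_x^2(h-q)$, the flux variable $w=g\,\partial_x^3h$ with $\partial_xw(\pm1)=\partial_x^3w(\pm1)=0$ and the weighted energy $\int_I w^2/g$, the recovery of $\partial_x^5h$ from $\partial_x^2w$, Aubin--Lions for the limit passage, and uniqueness via the same energy law applied to the difference. Your derivations of \eqref{boundl2}, \eqref{boundD:lin}, \eqref{identity:w}, \eqref{energy:w1} and \eqref{energy:w2} are exactly the paper's.

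The one place where you diverge is the closure of the $w$-estimate, and there your argument as written does not go through under the stated hypothesis $\partial_tg\in L^1([0,T];L^\infty(I))$. You bound the bad term by $\|\partial_tg/g^{3/2}\|_{L^2}\,\|w/\sqrt g\|_{L^2}\,\|w\|_{L^\infty}$, expand $\|w\|_{L^\infty}\lesssim\|w\|_{L^2}+\|w\|_{L^2}^{3/4}\|\partial_x^2w\|_{L^2}^{1/4}$, and absorb the dissipative factor by Young. Setting $a(t)=\|\partial_tg/g^{3/2}\|_{L^2(I)}$, $X=\|w/\sqrt g\|_{L^2}$, $Y=\|\partial_x^2w\|_{L^2}$, the absorption $aX^{7/4}Y^{1/4}\le \varepsilon Y^2+C_\varepsilon a^{8/7}X^2$ forces the Gr\"onwall weight $a^{8/7}$; but the hypothesis only places $a$ in $L^1([0,T])$ (for instance $a(t)\sim t^{-7/8}$ is admissible), so $a^{8/7}$ need not be time-integrable and the Gr\"onwall argument fails. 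The paper avoids this entirely: instead of Cauchy--Schwarz in $x$ it uses the pointwise bound $\int_I\frac{\partial_tg}{g^2}w^2\,dx\le\big\|\frac{\partial_tg}{g}\big\|_{L^\infty(I)}\int_I\frac{w^2}{g}\,dx$ (inequality \eqref{dtw:2}), which gives $\frac{d}{dt}\|w/\sqrt g\|_{L^2}^2+2\|\partial_x^2w\|_{L^2}^2\le\|\partial_tg/g\|_{L^\infty}\|w/\sqrt g\|_{L^2}^2$ and closes by a direct Gr\"onwall with the genuinely $L^1$-in-time weight $\|\partial_tg/g\|_{L^\infty}\le c_0^{-1}\|\partial_tg\|_{L^\infty}$, with no absorption needed. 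Replacing your Agmon--Young step by this one line repairs the proof; the remaining bookkeeping (an $H^2$-level estimate or interpolation giving $\partial_x^4h\in L^2_tL^2_x$, and $\|\partial_x^3h\|_{L^\infty}\le C\|\partial_x^4h\|_{L^2}$ from $\int_I\partial_x^3h\,dx=P-P=0$, needed to pass from $\partial_x^2w$ to $g\,\partial_x^5h$) is as in the paper.
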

 {The remainder of this section contains the proof of Theorem~\ref{Cauchy:lin}}.
Let $(g^n)$ a sequence of $C^\infty([0, T]\times \overline I)$ functions such that $g^n (x,t) \geq c_0/2$ and 
\bq\label{converge:g}
g^n\to g\in L^\infty([0, T]; H^2(I)),\quad \p_tg^n\to \p_tg\in L^1([0, T];  L^\infty(I)).
\eq
Let $h_0^n$ be a sequence of $C^\infty(\overline I)$ functions satisfying \eqref{bc} and converging to $h_0$ in $H^3(I)$. By the classical parabolic theory (see Theorem 6.2 \cite{LionsMagenes72}), there exists for each $n$ a unique solution $h^n \in C^\infty(\overline I)$
to the problem \eqref{dp:lin} with $g$ replaced by $g^n$ and $h_0$ replaced by $h_0^n$. We prove a closed a priori estimate for $h^n$ in $X(T)$, a contraction estimate in $H^1(I)$,  {and} then pass to the limit $n\to \infty$ to obtain the existence and uniqueness of a $h\in X(T)$ solving \eqref{dp:lin}.  To this end, we set
\[
 {u^n=h^n-\frac{P}{2}(x^2-1) - 1.}
\] 
Then,
\bq\label{eq:u:l}
 \partial_tu^n=-\partial_x(g^n \, \partial_x^3u^n)\quad\text{on}~[0, T],\quad u^n\vert_{t=0}=  {h_0^n-\frac{P}{2}(x^2-1)-1},
\eq
and 
\[
u^n(\pm 1, \cdot)=  {0} ,\quad u^n_{xx}(\pm 1,\cdot)=0.
\]
Throughout sections \ref{section:H1}, \ref{section:H2} and \ref{section:H3:l} we write $u^n=u$, $h^n=h$, $h_0^n=h_0$ and $g^n=g$ to simplify notation.
\subsection{$H^1$ energy}\label{section:H1}
We first claim that $h$ satisfies
\bq\label{energy:l}
\frac{d}{dt}\int_I (\mez | \partial_xh|^2+P h)=-\int_I g| \partial_x^3h|^2\le 0.
\eq
Indeed, we have
\begin{align*}
\frac{d}{dt}\int_I \mez |\partial_xh|^2&=\int_I \partial_t \partial_xh\partial_xh=\partial_th\partial_xh\vert_{-1}^1-\int_I \partial_th \partial_x^2h\\
&=\int_I \partial_x(g\partial_x^3h)\partial_x^2h=-\int_I g|\partial_x^3h|^2+Pg\partial_x^3h\vert_{-1}^1,
\end{align*}
and
\[
\frac{d}{dt}\int_I Ph=-P\int_I \partial_x(g\partial_x^3h)=-Pg\partial_x^3h\vert_{-1}^1,
\]
where we use the fact that $\partial_th(\pm 1, \cdot)=0$ (because $h(\pm 1, \cdot)=1$). This proves \eqref{energy:l}.

Next, multiplying \eqref{eq:u:l} by $-\partial_x^2u$, then integrating by parts we get
\[
-\int_I  \partial_tu\partial_x^2u=\int_I \partial_x(g\partial_x^3u)\partial_x^2u=g\partial_x^3u\partial_x^2u\vert_{-1}^1-\int_I g|\partial_x^3u|=-\int_I g|\partial_x^3u|^2.
\]
But
\[
-\int_I  \partial_tu\partial_x^2u=- \partial_tu \partial_xu\vert_{-1}^1+\int_I \partial_t\partial_x u\partial_xu=\mez\frac{d}{dt}\int_I | \partial_xu|^2
\]
noticing that $\partial_tu(\pm 1, \cdot)=0$ (because  {$u(\pm 1, \cdot)=0$}). Denoting 
\[
E_1=\Vert  \partial_xu\Vert_{L^2(I)},\quad D_1=\Vert \sqrt g \partial_x^3u\Vert_{ {L^2(I)}},
\]
we obtain
\bq\label{dtE1:l}
\mez\frac{d}{dt} E_1^2+D_1^2=0,
\eq
and hence
\bq\label{E1:l}
\mez E_1(T)^2+\Vert D_1\Vert_{L^2([0, T])}^2=\mez E_1(0)^2.
\eq
In particular, \eqref{E1:l} and the definition of $u$ gives 
\bq\label{bound:ux:l}
\Vert  \partial_x u\Vert_{L^2(I)}\le \Vert  \partial_xu(0)\Vert_{L^2(I)}  {\le \Vert \partial_x h_0 \Vert_{L^2(I)} + P}.
\eq
Since $u(\pm 1,t) = 0$, the Poincar\'e inequality also gives
\begin{align}
\Vert u\Vert_{L^2(I)} \leq C \Vert  \partial_x u\Vert_{L^2(I)} \leq C (1 + \Vert h_0 \Vert_{H^1(I)})
\end{align}
which implies together with \eqref{bound:ux:l} and the definition of $u$ that
\bq\label{H1:l}
\Vert h\Vert_{L^\infty([0, T]; H^1(I))}\le C  {(1 + \Vert h_0\Vert_{H^1(I)})}.
\eq
where $C$ only depends on $P$.

Moreover, by \eqref{E1:l} we obtain
\bq\label{boundD:lin1}
 {\int_0^T \!\!\! \int_I} g \, |\p_x^3h|^2 dxdt \le C\big(1+\Vert h_0\Vert^2_{H^1(I)}\big),
\eq
and by the positivity of $g$, 
\bq\label{d3:L2}
\Vert \partial_x^3h\Vert_{L^2([0, T]; L^2(I))}\le \frac{C}{\sqrt{c_0}}\big(1+\Vert h_0\Vert_{H^1(I)}\big)
\eq
where $c_0$ is as in the statement of the theorem.

\subsection{$H^2$ energy}\label{section:H2}
We multiply \eqref{eq:u:l} by $\partial_x^4u$ and integrate. On one hand,
\begin{align*}
\int_I \partial_tu\partial_x^4u&=\partial_tu\partial_x^3u\vert_{-1}^1-\int_I \partial_t\partial_xu\partial_x^3u=-\int_I \partial_t\partial_xu\partial_x^3u\\
&=-\partial_t\partial_xu\partial_x^2u\vert_{-1}^1+\int_I \partial_t\partial_x^2u\partial_x^2u=\mez\frac{d}{dt}\int_I |\partial_x^2u|^2.
\end{align*}
On the other hand,
\[
-\int_I \partial_x(g\partial_x^3u)\partial_x^4u=-\int_I g| {\partial_x^4} u|^2-\int_I \partial_x g \partial_x^3u\partial_x^4u.
\]
Denoting
\[
E_2=\Vert \partial_x^2u\Vert_{L^2(I)},\quad D_2=\Vert \sqrt g \partial_x^4u\Vert_{L^2(I)},
\]
it follows that
\[
\mez\frac{d}{dt} E_2^2+D^2_2=-\int_I \partial_x g \partial_x^3u \partial_x^4u\le \frac{1}{c_0}\Vert  \partial_x g\Vert_{L^\infty(I\times [0, T])}D_1D_2\le   \frac{1}{2c_0^2}\Vert  \partial_x g\Vert^2_{L^\infty(I\times [0, T])}D_1^2+\mez D_2^2.
\]
In view of \eqref{E1:l}, this yields
\begin{align*}
E^2_2(T)+\int_0^T D_2^2 dt 
&\le E^2_2(0)+\frac{1}{c^2_0}\Vert  \partial_x g\Vert^2_{L^\infty(I\times [0, T])}\int_0^T D_1^2 dt \\
&\le E^2_2(0)+\frac{1}{ {2} c^2_0}\Vert  \partial_x g\Vert^2_{L^\infty(I\times [0, T])}E_1^2(0),
\end{align*}
and consequently, 
\bq\label{d24:l}
\Vert \partial_x^2h\Vert_{L^\infty([0, T]; L^2(I))}+  {\sqrt{c_0}}\Vert \partial_x^4h\Vert_{L^2([0, T]; L^2(I))}\le 
 {C(1+\Vert h_0\Vert_{H^2})} + \frac{1}{c_0}\Vert  \partial_x g\Vert_{L^\infty(I\times [0, T])}(\Vert h_0\Vert_{H^1(I)}+C).
\eq
This, together with \eqref{H1:l} implies
\bq\label{H2:l}
\Vert h\Vert_{L^\infty([0, T]; H^2(I))}+  {\sqrt{c_0}} \Vert \partial_x^4h\Vert_{L^2([0, T]; L^2(I))}\le   {C(1+\Vert h_0\Vert_{H^2})} + \frac{1}{c_0}\Vert  \partial_x g\Vert_{L^\infty(I\times [0, T])}(\Vert h_0\Vert_{H^1(I)}+C).
\eq
\subsection{$H^3$ energy}\label{section:H3:l}
A direct $L^2$ estimate for $\p_x^3u$ would make  high order boundary terms  {appear} (up to order $5$) which are not given by the boundary conditions.  {Instead}, we exploit further the structure of the equation. Setting $w=g\, \p_x^3h$,  {we have} $\p_th=-\p_xw$, and thus $\p_xw(\pm 1)=\p_x^3w(\pm 1)=0$ in view of \eqref{bc}.  {From the identity}
\[
\p_t w=\p_tg\p_x^3h+g\p_x^3\p_th=\frac{\p_tg}{g}w-g\p_x^4w
\]
 {we conclude}
\bq\label{energy:w}
\begin{aligned}
\mez\frac{d}{dt}\int_I \frac{w^2}{g}&=\int_I \p_t w\frac{w}{g}-\mez\int_I \frac{\p_t g}{g^2}w^2=\int_I \frac{\p_tg}{g^2}w^2-\int_I w\p_x^4 w-\mez\int_I \frac{\p_t g}{g^2}w^2\\
&=\mez\int_I \frac{\p_tg}{g^2}w^2-\int_I w\p_x^4 w.
\end{aligned}
\eq
Integrating by parts twice and using the boundary conditions for $w$ gives
\[
\int_I w\p_x^4 w=\int_I |\p_x^2w|^2,
\]
which yields
\bq\label{identity:wn}
\int_I \frac{w^2(x, t)}{g(x, t)}dx=\int_I \frac{w^2(x, 0)}{g(x, 0)}dx+\int_0^t \!\!\! \int_I \frac{\p_tg}{g^2}w^2(x, s)dxds-2\int_0^t \!\!\! \int_I |\p_x^2w|^2(x, s)dxds,
\eq
\bq\label{dtw:1}
\mez\frac{d}{dt}\Vert \frac{w}{\sqrt g}\Vert_{L^2}^2+\Vert\p_x^2w\Vert_{L^2}^2\le \mez\Vert \frac{\p_tg}{g^\tdm}\Vert_{L^2}\Vert \frac{w}{\sqrt g}\Vert_{L^2}\Vert w\Vert_{L^\infty}
\eq
and
\bq\label{dtw:2}
\mez\frac{d}{dt}\Vert \frac{w}{\sqrt g}\Vert_{L^2}^2+\Vert\p_x^2w\Vert_{L^2}^2\le \mez\Vert \frac{\p_tg}{g}\Vert_{L^\infty}\Vert \frac{w}{\sqrt g}\Vert_{L^2}^2.
\eq
By \eqref{dtw:2} and Gr\"onwall's lemma,
\bq\label{est:w:lin}
\Vert \frac{w}{\sqrt g}\Vert_{L^\infty([0, T]; L^2)}+\Vert\p_x^2w\Vert_{L^2([0, T]; L^2)}\le \Vert \frac{w_0}{\sqrt g_0}\Vert_{L^2}\exp\big(2\int_0^T\Vert \frac{\p_tg}{g}\Vert_{L^\infty} ds\big).
\eq
Moreover, since 
\[
\p_x^2w=\p_x^2g\p_x^3h+2\p_xg\p_x^4h+g\p_x^5h
\]
and 
\bq\label{d3d4}
\Vert \p_x^3h\Vert_{L^\infty(I)}\le C\Vert \p_x^4h\Vert_{L^2(I)},
\eq
which follows from Poincar\'e-Wirtinger's inequality and the fact that \[
\int_I\p_x^3hdx=\p_x^2h(1)-\p_x^2h(-1)=P-P=0,
\]
we get
\bq\label{w-H5}
\begin{aligned}
\Vert g\p_x^5h\Vert_{L^2}&\le \Vert \p_x^2g\Vert_{L^2}\Vert \p_x^3h\Vert_{L^\infty}+2\Vert \p_xg\Vert_{L^\infty}\Vert \p_x^4h\Vert_{L^2}+\Vert \p_x^2w\Vert_{L^2}\\
&\le C\Vert g\Vert_{H^2}\Vert \p_x^4h\Vert_{L^2}+\Vert \p_x^2w\Vert_{L^2}.
\end{aligned}
\eq
In view of \eqref{H2:l}, \eqref{est:w:lin},  {\eqref{w-H5}, and the lower bound $g \geq c_0$}, we thus obtain
\bq\label{H3:l}
\Vert \p_x^3h\Vert_{L^\infty([0, T]; L^2)}+\Vert \p_x^5h\Vert_{L^2([0, T]; L^2)}\le \cF\big(\frac{1}{c_0}, \Vert   g\Vert_{L^\infty([0, T]; H^2)}, \Vert \p_tg\Vert_{L^1([0, T]; L^\infty)}, \Vert h_0\Vert_{H^3}\big).
\eq
\subsection{Proof of Theorem \ref{Cauchy:lin}}
A combination of \eqref{H2:l}, \eqref{d3:L2}
 and \eqref{H3:l} leads to
\bq\label{apriori:l}
\begin{aligned}
\Vert h^n\Vert_{X(T)}&\le \cF\big(\frac{1}{c_0},  \Vert g^n\Vert_{L^\infty([0, T]; H^2(I))},\Vert \p_tg^n\Vert_{L^1([0, T]; L^\infty)}, \Vert h_0^n\Vert_{H^3}\big)\\
&\le \cF\big(\frac{1}{c_0},  \Vert g\Vert_{L^\infty([0, T]; H^2(I))}, \Vert \p_tg\Vert_{L^1([0, T]; L^\infty)}, \Vert h_0\Vert_{H^3}\big).
\end{aligned}
\eq
Recall that $\p_th^n=-\p_xw^n$ and $\p_xw^n(\pm 1)=0$. It then follows from Poincar\'e's inequality and \eqref{est:w:lin} that
 \bq\label{apriori:l2}
\Vert \partial_th^n\Vert_{L^2([0, T]; H^1)}\le C  {\Vert  w^n\Vert_{L^2([0, T]; H^2)}} \le \cF\big(\frac{1}{c_0}, \Vert \p_tg\Vert_{L^1([0, T]; L^\infty)}, \Vert h_0\Vert_{H^3}\big).
\eq
By virtue of Aubin-Lions's lemma applied with the triple $H^3(I)\subset C^2(\overline I)\subset H^1(I)$, there exists $h\in X(T)$ such that
\begin{align}
\label{conv1:l}
&h^n\rightharpoonup h\quad\text{in}~L^2([0, T]; H^5(I))),\\
&h^n\rightharpoonup *~h\quad\text{in}~L^\infty([0, T]; H^3(I))
\label{conv2:l},\\
&h^n\to h\quad\text{in}~C([0, T]; C^2(\overline I)). \label{conv3:l}
\end{align}
For any test function $\phi\in C^\infty_0(I\times (0, T))$, 
\[
\int_0^T\int_I h^n\partial_t \phi dxdt+\int_0^T\int_I g^n\partial_x^3h^n\partial_x\phi dxdt=0.
\]
The convergences \eqref{conv1:l} and \eqref{converge:g} ensure that  $(h, g)$ satisfies the same weak formulation.  Then because $h\in L^2([0, T]; H^4(I))$ and $g\in L^\infty([0, T]; H^2(I))$, we actually have $\partial_th+\partial_x(g\partial_x^3h)=0$ in $L^2([0, T]; H^1)$.  Next, \eqref{conv3:l} implies that $h(0)=h_0$ and the boundary conditions $\partial_xh(\pm 1, t)=1$, $\partial_x^2h(\pm 1, t)=P$ are observed for any $t\in [0, T]$. The bounds \eqref{boundl1}, \eqref{boundl2} and \eqref{boundD:lin} on $h$ are inherited from the corresponding bounds \eqref{apriori:l}, \eqref{H1:l} and \eqref{boundD:lin1} on $h^n$. Letting $n\to \infty$ in \eqref{identity:wn} yields \eqref{identity:w}. Finally, integrating \eqref{dtw:1} and letting $n\to \infty$ we obtain \eqref{energy:w1} and \eqref{energy:w2}.

 {The uniqueness of solutions follows from the energy inequality.} Let $h_1$, $h_2$ be two solutions of \eqref{dp:lin} with the same initial condition $h_0$. The difference $k=h_1-h_2$ solves
\bq\label{diff:l}
\begin{cases}
 \partial_tk(x, t)+\partial_x(g\partial_x^3k)(x, t)=0, &\quad(x, t)\in I\times (0, T),\\
 k(\pm 1, t)=\partial_x^2k(\pm 1, t)=0,&\quad t>0,\\
k(x, t)=0, &\quad t=0.
 \end{cases}
 \eq
 {Similarly to the $H^1$ energy estimate for $u$ above}, we multiply the first equation in \eqref{diff:l} by $-\partial_x^2k$ and integrate by parts to get
 \[
 \mez\frac{d}{dt}\Vert \partial_xk\Vert_{L^2(I)}^2=-\int_I g|\partial_x^3k|^2\le 0,
 \]
 consequently $\partial_xk=0$. Since $k(\pm 1)=0$ we conclude that $k=0$,  {concluding the proof of uniqueness.} 
\section{A nondegenrerate problem}
Fixing a small positive real number $\eps$, we prove in this section the global well-posedness of the following nondegenerate nonlinear parabolic problem
\bq\label{dp:nd}
\begin{cases}
 \partial_th(x, t)+\partial_x(\sqrt{ h^2+\eps^2}\partial_x^3h)(x, t)=0, &\quad(x, t)\in I\times (0, \infty),\\
h(\pm 1, t)=1, \partial_x^2h(\pm 1, t)=P, &\quad t>0,\\
h(x, t)=h_0(x), &\quad t=0.
 \end{cases}
 \eq
 \begin{theo}[\bf Strong solution for the nondegenerate nonlinear problem] \label{Cauchy:nd}
 For every $h_0\in H^3$ satisfying the boundary conditions \eqref{bc}, and for every $T>0$, there exists a unique solution $h\in X(T)$ to problem \eqref{dp:nd}. Moreover, $h$ obeys the bounds
\begin{align}
\label{bound:nd1}
&\Vert h\Vert_{X(T)}\le \cF\big(\frac{1}{\inf_{I\times [0, T]}|h|+\eps}, \Vert h_0\Vert_{H^3}\big),\\
\label{bound:nd2}
&\Vert h\Vert_{L^\infty([0, T]; H^1(I))}\le C (1+ \Vert h_0\Vert_{H^1(I)})
\end{align}
with $\cF$ and $C$ depending only on $P$. Furthermore, \eqref{boundD:lin}, \eqref{identity:w}, \eqref{energy:w1} and \eqref{energy:w2} hold with $g=\sqrt{h^2+\eps^2}$.
 \end{theo}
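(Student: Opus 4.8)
The plan is to solve \eqref{dp:nd} by a fixed-point iteration built on the linear solver of Theorem \ref{Cauchy:lin}, exploiting that the regularized coefficient $g=\sqrt{h^2+\eps^2}$ is bounded below by $\eps>0$ uniformly, so that the parabolicity never degenerates and the problem is globally well-posed for each fixed $\eps$. The first point to check is that the coefficient produced from any candidate $h\in X(T)$ is admissible for the linear theory, i.e. that $g$ satisfies \eqref{cd:g}. Since $s\mapsto\sqrt{s^2+\eps^2}$ is smooth with bounded derivatives and $g\ge\eps$, the chain rule together with the embedding $H^2(I)\hookrightarrow W^{1,\infty}(I)$ gives $\Vert g\Vert_{L^\infty([0,T];H^2(I))}\le\cF(\frac1\eps,\Vert h\Vert_{L^\infty([0,T];H^2(I))})$, while $\p_t g=\frac{h}{\sqrt{h^2+\eps^2}}\p_t h$ obeys $|\p_t g|\le|\p_t h|$. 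Because $h\in X(T)$ yields $\p_x^4h\in L^2([0,T];H^1(I))\hookrightarrow L^2([0,T];L^\infty(I))$ and $\p_x^3h\in L^2([0,T];H^2(I))\hookrightarrow L^2([0,T];L^\infty(I))$, the identity $\p_t h=-\p_x(g\p_x^3h)$ places $\p_t h$, hence $\p_t g$, in $L^2([0,T];L^\infty(I))\hookrightarrow L^1([0,T];L^\infty(I))$. Thus Theorem \ref{Cauchy:lin} applies and defines a map $\Phi\colon h\mapsto\tilde h$, where $\tilde h\in X(T)$ solves \eqref{dp:lin} with this $g$.

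I would first establish local existence on a short interval $[0,T_0]$ by showing that $\Phi$ maps a ball $B_R\subset X(T_0)$ into itself and contracts in the weaker metric $C([0,T_0];H^1(I))$. The self-map property follows from \eqref{boundl1}: choosing $R$ large relative to $\Vert h_0\Vert_{H^3}$ and $\frac1\eps$, and then $T_0$ small so that the $g$-dependent arguments of $\cF$ (controlled by the above bounds in terms of $R$ and $T_0$) stay below thresholds fixed by $R$. For the contraction, given $h_1,h_2\in B_R$ with images $\tilde h_1,\tilde h_2$, the difference $k=\tilde h_1-\tilde h_2$ solves $\p_t k+\p_x(g_1\p_x^3k)=-\p_x\big((g_1-g_2)\p_x^3\tilde h_2\big)$ with $g_i=\sqrt{h_i^2+\eps^2}$ and homogeneous data; running the $H^1$ energy estimate from the linear uniqueness argument and using the Lipschitz bound $|g_1-g_2|\le|h_1-h_2|$ to absorb the forcing via Young's inequality and Poincaré's inequality, I would close a contraction in $C([0,T_0];H^1(I))$ for $T_0$ small, producing a unique local solution.

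To pass from local to global I would rerun the closed a priori estimates of Section 2 self-consistently with $g=\sqrt{h^2+\eps^2}$. The $H^1$ energy \eqref{energy:l}, here $\frac{d}{dt}E(h)=-\int_I g|\p_x^3h|^2\le0$, yields $\Vert h\Vert_{L^\infty([0,T];H^1(I))}\le C(1+\Vert h_0\Vert_{H^1(I)})$ and the dissipation bound \eqref{boundD:lin}, \emph{uniformly in} $T$; this gives \eqref{bound:nd2}. The $H^2$ estimate \eqref{d24:l} then requires only $\Vert\p_x g\Vert_{L^\infty}\le\Vert\p_x h\Vert_{L^\infty}$, and the $H^3$ estimate for $w=g\p_x^3h$ requires $\int_0^T\Vert\p_t g/g\Vert_{L^\infty}\,ds$; both are controlled by the already-bounded lower norms together with the $X(T)$ norm through Gröwnall's lemma, with constants depending on $\frac1\eps$ but finite on each $[0,T]$ since $g\ge\eps$ precludes any finite-time degeneration. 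This closes \eqref{bound:nd1}, where one uses $\sqrt{h^2+\eps^2}\gtrsim\inf_{I\times[0,T]}|h|+\eps$ so that $\frac1{c_0}\lesssim\frac{1}{\inf_{I\times[0,T]}|h|+\eps}$, so the continuation criterion extends the solution to all $T>0$; the identities \eqref{identity:w}, \eqref{energy:w1}, \eqref{energy:w2} are inherited directly from the limit procedure of the linear theory.

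The main obstacle I expect is closing the nonlinear feedback in the higher-order estimates: the quantity $\p_t g$ appearing in \eqref{cd:g} and in \eqref{energy:w1}--\eqref{energy:w2} contains $\p_t h=-\p_x(g\p_x^3h)$, hence fourth derivatives of $h$, so the $L^1([0,T];L^\infty)$ control of $\p_t g$ feeds back into the very norm being estimated. Arranging the Gröwnall structure so that the $\frac1\eps$-dependent constants remain finite on every finite interval, and so that the resulting bound genuinely has the form $\cF(\frac{1}{\inf|h|+\eps},\Vert h_0\Vert_{H^3})$ of \eqref{bound:nd1}, is the delicate part. Uniqueness then follows exactly as in the linear case: the difference of two solutions satisfies the $H^1$ energy identity with an extra forcing coming from $g_1-g_2$, which the Lipschitz bound $|g_1-g_2|\le|h_1-h_2|$ absorbs by Gröwnall.
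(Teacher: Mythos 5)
Your overall architecture (linear solver from Theorem \ref{Cauchy:lin}, iteration/fixed point for local existence, Lipschitz bound $|g_1-g_2|\le|h_1-h_2|$ for uniqueness, and re-running the energy hierarchy with $g=\sqrt{h^2+\eps^2}$ for the global a priori bound) is the same as the paper's. But there are two places where you gesture at the key difficulty without supplying the idea that closes it. First, the self-map property of $\Phi$ on a ball $B_R\subset X(T_0)$: the bound \eqref{boundl1} reads $\Vert\tilde h\Vert_{X(T_0)}\le \cF(\frac1\eps,\Vert g\Vert_{L^\infty([0,T_0];H^2)},\Vert\p_tg\Vert_{L^1L^\infty},\Vert h_0\Vert_{H^3})$, and while the $\Vert\p_tg\Vert_{L^1L^\infty}$ slot does shrink like $\sqrt{T_0}$, the $\Vert g\Vert_{L^\infty([0,T_0];H^2)}$ slot is controlled only by $\cF_1(\frac1\eps,R)$ and is \emph{not} made small by shrinking $T_0$. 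Since $\cF$ is merely an increasing function, there is no reason that $\cF(\frac1\eps,\cF_1(\frac1\eps,R),\cdot,\Vert h_0\Vert_{H^3})\le R$ for any choice of $R$; "choosing $T_0$ small so that the $g$-dependent arguments stay below thresholds fixed by $R$" does not address the argument that doesn't decay with $T_0$. The paper explicitly flags this ("a direct induction based on \eqref{direct:recur} would amplify the bound") and repairs it with the lemma giving \eqref{recur:T}: one first shows, via the Gagliardo--Nirenberg interpolation $\Vert\p_x^3f\Vert_{L^2}\lesssim\Vert\p_x^4f\Vert_{L^2}^{3/4}\Vert f\Vert_{L^2}^{1/4}+\Vert f\Vert_{L^2}$ and H\"older in time, that $\Vert h^{n}\Vert_{L^\infty([0,T];H^2)}\le C\Vert h_0\Vert_{H^2}+C+T^\beta\cF(\cdots)$, i.e.\ the $H^2$ norm of the iterate stays near its initial value up to a factor that genuinely carries a positive power of $T$; only then is \eqref{direct:recur} applied. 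Without this (or an equivalent interpolation argument using $\p_th\in L^2H^1$ to show $\Vert h(t)-h_0\Vert_{H^2}\lesssim t^{1/4}R$), your induction/self-map step does not close.

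Second, for the global bound \eqref{bound:nd1} you correctly identify that $\p_tg$ contains $\p_x^4h$ and hence feeds back into the norm being estimated, but you leave this as "the delicate part" rather than resolving it. The paper's resolution is a strict hierarchy: the $H^1$ energy gives $\int_0^T\int_Ig|\p_x^3h|^2\le C(1+\Vert h_0\Vert_{H^1}^2)$ \emph{uniformly in $T$}; the $H^2$ estimate is closed by Gr\"onwall against this time-integrable dissipation (see \eqref{apriori:E2}, where the exponent is $\frac{C}{h_m^2+\eps^2}E_1^2(0)$, not $e^{CT}$), using the structural identity $\p_xg=(\p_xu+Px)\frac hg$ with $|h|\le g$; then $\p_tg$ is controlled in $L^2_tL^2_x$ purely by $H^2$-level quantities via \eqref{dt:g}, and the $w$-estimates \eqref{energy:w1}--\eqref{energy:w2} need only $\Vert\p_tg/g^{3/2}\Vert_{L^2}\Vert w\Vert_{L^\infty}$ with $\Vert w\Vert_{L^\infty}\lesssim(\Vert h\Vert_{L^\infty}+\eps)\Vert\p_x^4h\Vert_{L^2}$ by \eqref{d3d4} --- again already controlled. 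This ordering is what breaks the circularity and, importantly, yields a $\cF$ depending only on $P$ and not on $T$, as the statement requires; your appeal to Gr\"onwall with "constants finite on each $[0,T]$" would generically produce $T$-dependent (exponentially growing) constants, which is weaker than \eqref{bound:nd1} and would not suffice for the later use of the continuation criterion in Theorem \ref{coro:stability}. Your contraction in $C([0,T_0];H^1)$ and the uniqueness argument are fine and essentially match the paper's Section 3.1.
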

  \subsection{Uniqueness}\label{section:unique:nd}
 If $h_1$ and $h_2$ are two solutions of \eqref{dp:nd}, we set $k=h_1-h_2$ and $g_j=\sqrt{h_j^2+\eps^2}$, $j=1, 2$. Observe that $k$ solves
  \bq\label{diff:hn}
\begin{cases}
 \partial_tk(x, t)+\partial_x(g_1\partial_x^3k)(x, t)+\partial_x((g_1-g_2)\partial_x^3h_2)(x, t)=0, &\quad(x, t)\in I\times (0, \infty),\\
 k(\pm 1, t)=\partial_x^2k(\pm 1, t)=0,&\quad t>0,\\
k(x, t)=0, &\quad t=0.
 \end{cases}
 \eq
 Multiplying the first equation in \eqref{diff:hn} by $-\partial_x^2k$ and integrating by parts (note that $\p_th_j\in L^2([0, T]; H^1_0(I))$) we get
 \begin{align*}
 \mez\frac{d}{dt}\Vert \partial_xk\Vert_{L^2(I)}^2&=-\int_I g_1|\partial_x^3k|^2-\int_I (g_1-g_2)\partial_x^3h_2\partial_x^3k\\
 &\le -\eps\int_I |\partial_x^3k|^2-\int_I (g_1-g_2)\partial_x^3h_2\partial_x^3k.
 \end{align*}
  It is readily seen that 
 \[
 \la g_1(x)-g_2(x)\ra \le \Vert h_1-h_2\Vert_{L^\infty(I)} \le C\Vert k\Vert_{H^1(I)}
 \]
 which implies
 \[
 \la \int_I (g_1-g_2)\partial_x^3h_2\partial_x^3k\ra \le C\Vert k\Vert_{H^1(I)}\Vert \partial_x^3h_2\Vert_{L^2(I)}\Vert \partial_x^3k\Vert_{L^2(I)}.
 \]
 Since $k(\pm 1, \cdot)=0$, Poincar\'e's inequality gives $
 \Vert k\Vert_{H^1(I)}\le C\Vert \partial_xk\Vert_{L^2(I)}$. 
 This combined with a Young inequality leads to
 \[
  \mez\frac{d}{dt}\Vert \partial_xk\Vert_{L^2(I)}^2\le C_\eps\Vert \partial_xk\Vert^2_{L^2(I)}\Vert \partial_x^3h_2\Vert^2_{L^2(I)}.
 \]
 Because $\Vert \partial_x^3h_2\Vert_{L^2(I)}\in L^2([0, T])$ for any $T>0$ we conclude by Gr\"onwall's lemma that $\partial_xk=0$ and thus $k=0$.
 \subsection{Local existence}\label{section:local-nd}
The existence of a local-in-time solution is obtained by Picard's iterations. We set $h^0(x, t)=h_0(x)$ for all $t>0$ and define recursively $h^{n+1}$, $n\ge 0$, to be the solution of the problem
 \bq\label{recur}
\begin{cases}
 \partial_th^{n+1}(x, t)+\partial_x(g^{n+1}\partial_x^3h^{n+1})(x, t)=0, &\quad(x, t)\in I\times (0, \infty),\\
 g^{n+1}=\sqrt{ |h^n|^2+\eps^2},\\
 h(\pm 1, t)=1,\quad\partial_x^2h(\pm 1, t)=P,&\quad t>0,\\
h(x, t)=h_0(x), &\quad t=0.
 \end{cases}
 \eq
 Applying recursively Theorem \ref{Cauchy:lin} we find that $h^n\in X(T)$ for any $T>0$.  We now prove by induction that there exist $T_0, C_0>0$,
 \[
 T_0=T_0(\frac{1}{\eps}, \Vert h_0\Vert_{H^3}),\quad  C_0=C_0(\frac{1}{\eps}, \Vert h_0\Vert_{H^3}),
 \]
  such that for any $n\ge 0$,
 \bq\label{uni:hn}
 \Vert h^n\Vert_{X(T_0)}+\Vert \p_th^n\Vert_{L^1([0, T_0]; L^\infty)}\le C_0.
 \eq
In view of the identities
\bq\label{dg}
\partial_t g^{n+1}=\frac{h^n\partial_th^n}{\sqrt{|h^n|^2+\eps^2}}, \quad\partial_x g^{n+1}=\frac{h^n\partial_xh^n}{\sqrt{|h^n|^2+\eps^2}},\quad \partial_{x}^2 g^{n+1}=\frac{|\partial_xh^n|^2+h^n\partial_x^2h^n}{\sqrt{|h^n|^2+\eps^2}}-\frac{|h^n|^2|\partial_xh^n|^2}{(|h^n|^2+\eps^2)^\tdm},
\eq
 we find
 \bq\label{est:gt}
\Vert \p_tg^{n+1}\Vert_{L^1([0, T]; L^\infty)}\le \Vert \p_th^n\Vert_{L^1([0, T]; L^\infty)}
\eq
 and
\bq\label{est:g}
\Vert g^{n+1}\Vert_{L^\infty([0, T]; H^2)}\le \cF_1(\frac{1}{\eps}, \Vert h^n\Vert_{L^\infty([0, T]; H^2)})
\eq
 This together with \eqref{boundl1} yields
\bq\label{direct:recur}
\Vert h^{n+1}\Vert_{X(T)}\le \cF_2(T, \frac{1}{\eps}, \Vert h^n\Vert_{L^\infty([0, T]; H^2)}, \Vert \p_th^n\Vert_{L^	1([0, T]; L^\infty)}, \Vert h_0\Vert_{H^3}).
\eq
Thus
\bq\label{direct:recur1}
\Vert h^{n+1}\Vert_{L^2([0, T]; H^5)}\le \cF_2(T, \frac{1}{\eps}, \Vert h^n\Vert_{L^\infty([0, T]; H^2)}, \Vert \p_th^n\Vert_{L^	1([0, T]; L^\infty)}, \Vert h_0\Vert_{H^3})
\eq
possibly with another $\cF_2$. From the equation for $h^{n+1}$ we deduce that 
\bq\label{direct:recur2}
\begin{aligned}
\Vert \p_th^{n+1}\Vert_{L^1([0, T]; L^\infty)}&\le \sqrt{T}\Vert \p_th^{n+1}\Vert_{L^2([0, T]; L^\infty)}\\
&\le C\sqrt{T}\Vert g^{n+1}\Vert_{L^\infty([0, T]; H^2)}\Vert h^{n+1}\Vert_{L^2([0, T]; H^5)}\\
&\le \sqrt{T}\cF_3(T, \frac{1}{\eps}, \Vert h^n\Vert_{L^\infty([0, T]; H^2)}, \Vert \p_th^n\Vert_{L^1([0, T]; L^\infty)}, \Vert h_0\Vert_{H^3}).
\end{aligned}
\eq
Thus \eqref{uni:hn}  holds for $n=0, 1$ with arbitrary $T_0\in (0, 1)$ and 
\bq\label{direct:01}
C_0>\max\Big\{\Vert h_0\Vert_{H^3}, \cF_2(1, \frac{1}{\eps}, \Vert h_0\Vert_{H^3}, 0, \Vert h_0\Vert_{H^3}), \cF_3(1, \frac{1}{\eps}, \Vert h_0\Vert_{H^3}, 0, \Vert h_0\Vert_{H^3})\Big\}=:M.
\eq
 Assume \eqref{uni:hn} for $0, 1,...n$ with $n\ge 1$ we now prove it for $n+1$. A direct induction based on \eqref{direct:recur} would amplify the bound for $h^{n+1}$, and thus additional considerations are needed. 
 \begin{lemm}
 There exist $\delta\in (0, 1)$ and $\cF_5$, $\cF_6$ such that for all $T\le 1$ and $n\ge 1$,
 \bq\label{recur:T}
 \begin{aligned}
 \Vert h^{n+1}\Vert_{X(T)}\le  \cF_5\Big(\frac{1}{\eps}, T^\delta \cF_6\big(\Vert h^{n-1}\Vert_{L^\infty([0, T]; H^2(I))},  \Vert \p_th^{n-1}\Vert_{L^2([0, T]; H^1)}\big), \Vert h_0\Vert_{H^3(I)} \Big).
 \end{aligned}
 \eq
 \end{lemm}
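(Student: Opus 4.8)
The obstacle, as flagged after \eqref{direct:01}, is that the coefficient $g^{n+1}=\sqrt{|h^n|^2+\eps^2}$ in the equation for $h^{n+1}$ carries $h^n$, so feeding \eqref{boundl1} directly reproduces the bound on $h^n$ with no gain. The plan is to step one index further back: since $h^n$ itself solves \eqref{recur} with coefficient $g^n=\sqrt{|h^{n-1}|^2+\eps^2}$, every quantity in which $h^n$ appears can be re-expressed, via Theorem~\ref{Cauchy:lin}, in terms of $h^{n-1}$, and on this second pass a positive power of $T$ can be extracted. First I would recall \eqref{direct:recur}, which already controls $\Vert h^{n+1}\Vert_{X(T)}$ by an increasing function of $\tfrac1\eps$, $\Vert h_0\Vert_{H^3}$, and the two $h^n$-dependent quantities $\Vert h^n\Vert_{L^\infty([0,T];H^2)}$ and $\Vert \p_t h^n\Vert_{L^1([0,T];L^\infty)}$ (for $T\le 1$ the explicit $T$-argument is harmless). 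It then suffices to show that each of these two quantities splits as a data term plus $T^\delta$ times a quantity governed by $h^{n-1}$.

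The time-derivative quantity is the easier one: the one-dimensional embedding $H^1(I)\hookrightarrow L^\infty(I)$ and H\"older's inequality in time give
\[
\Vert \p_t h^n\Vert_{L^1([0,T];L^\infty)}\le \sqrt T\,\Vert \p_t h^n\Vert_{L^2([0,T];L^\infty)}\le C\sqrt T\,\Vert \p_t h^n\Vert_{L^2([0,T];H^1)},
\]
so this term already carries a factor $\sqrt T$. The delicate quantity is $\Vert h^n\Vert_{L^\infty([0,T];H^2)}$, because an $L^\infty$-in-time norm comes with no time integral and hence no obvious gain in $T$. Here I would compare $h^n$ to its initial datum and interpolate: from $h^n(t)-h_0=\int_0^t \p_t h^n$ one gets $\Vert h^n-h_0\Vert_{L^\infty([0,T];H^1)}\le \sqrt T\,\Vert \p_t h^n\Vert_{L^2([0,T];H^1)}$, while $\Vert h^n-h_0\Vert_{L^\infty([0,T];H^3)}\le \Vert h^n\Vert_{X(T)}+\Vert h_0\Vert_{H^3}$. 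The interpolation inequality $\Vert f\Vert_{H^2}\le C\Vert f\Vert_{H^1}^{\mez}\Vert f\Vert_{H^3}^{\mez}$ on $I$ then produces
\[
\Vert h^n\Vert_{L^\infty([0,T];H^2)}\le \Vert h_0\Vert_{H^2}+C\,T^{1/4}\,\Vert \p_t h^n\Vert_{L^2([0,T];H^1)}^{\mez}\big(\Vert h^n\Vert_{X(T)}+\Vert h_0\Vert_{H^3}\big)^{\mez},
\]
so that, beyond the data term $\Vert h_0\Vert_{H^2}$, the $h^n$-dependence now carries the factor $T^{1/4}$.

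It remains to bound the quantities $\Vert h^n\Vert_{X(T)}$ and $\Vert \p_t h^n\Vert_{L^2([0,T];H^1)}$ appearing above in terms of $h^{n-1}$. For this I would apply \eqref{boundl1} and \eqref{apriori:l2} to $h^n$ and invoke \eqref{est:gt} and \eqref{est:g}: this controls both by an increasing function of $\tfrac1\eps$, $\Vert h_0\Vert_{H^3}$, $\Vert h^{n-1}\Vert_{L^\infty([0,T];H^2)}$ and $\Vert \p_t h^{n-1}\Vert_{L^2([0,T];H^1)}$, after again rewriting $\Vert \p_t g^n\Vert_{L^1([0,T];L^\infty)}\le \Vert \p_t h^{n-1}\Vert_{L^1([0,T];L^\infty)}\le C\sqrt T\,\Vert \p_t h^{n-1}\Vert_{L^2([0,T];H^1)}$. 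The essential point is that there is no circularity: since $g^n$ is built from $h^{n-1}$ rather than $h^n$, these bounds close at the level of $h^{n-1}$. Substituting into the two displays and absorbing all $\eps$- and data-dependent factors into $\cF_5$, the whole $h^{n-1}$-dependence enters only through the factor $T^{1/4}$, which yields \eqref{recur:T} with $\delta=\tfrac14$. The main obstacle is exactly this interpolation step for the $L^\infty([0,T];H^2)$ norm — trading the missing time factor for one order of spatial regularity while keeping the power of $T$ attached solely to the $h^{n-1}$ terms; ensuring that no $T$-free dependence on $h^n$ sneaks back in is where the bookkeeping must be done carefully.
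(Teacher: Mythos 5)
Your argument is correct, and it reaches the same structural conclusion as the paper (data term plus $T^\delta$ times an increasing function of the $h^{n-1}$ quantities, fed back into \eqref{direct:recur}), but the key step is executed differently. You both treat $\Vert \p_t h^n\Vert_{L^1([0,T];L^\infty)}$ identically (H\"older in time, i.e.\ \eqref{direct:recur2} shifted down by one index), and you both correctly identify $\Vert h^n\Vert_{L^\infty([0,T];H^2)}$ as the term with no free time integral. The paper extracts the power of $T$ there by redoing the $H^2$ energy estimate for $u^n=h^n-\frac P2x^2$ and applying Gagliardo--Nirenberg to $\p_x^3u^n$ inside the commutator term $\int_I \p_xg^n\,\p_x^3u^n\,\p_x^4u^n$, so that only a power $1+\alpha<2$ of $\p_x^4u^n$ appears and H\"older in time yields $T^{(1-\alpha)/2}$; this is self-contained at the $H^2$ level of $h^n$. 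You instead write $h^n(t)-h_0=\int_0^t\p_th^n$ to gain $\sqrt T$ in $L^\infty_tH^1$ and then interpolate $\Vert f\Vert_{H^2}\le C\Vert f\Vert_{H^1}^{\mez}\Vert f\Vert_{H^3}^{\mez}$, paying with the full $X(T)$ norm of $h^n$, which you then close at the level of $h^{n-1}$ via \eqref{direct:recur} and \eqref{apriori:l2}. Your route is softer (no further use of the equation's energy structure, and it is legitimate since $\p_th^n\in L^2([0,T];H^1)$ makes the fundamental theorem of calculus valid in $H^1$), at the cost of invoking the top-order bound on $h^n$ where the paper stays one derivative lower; the resulting exponents ($\delta=1/4$ versus the paper's $\beta=(1-\alpha)/2$) are immaterial. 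The residual bookkeeping issue you flag --- that $\cF_5,\cF_6$ must be arranged so that the $h^{n-1}$-dependence sits entirely behind the factor $T^\delta$, with $T^\delta\cF_6\le 1$ forcing a bound depending only on $\eps$ and $\Vert h_0\Vert_{H^3}$ --- is present in exactly the same form in the paper's final display, so your proof is complete to the same standard.
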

 \begin{proof}
 We first note that $u^{n}:=h^{n}-\frac P2 x^2$ solves 
  \bq\label{recur:u}
\begin{cases}
 \partial_tu^{n}(x, t)+\partial_x(g^{n}\partial_x^3u^{n})(x, t)=0, &\quad(x, t)\in I\times (0, \infty),\\
 g^{n}=\sqrt{ |h^{n-1}|^2+\eps^2},\\
 u^n(\pm 1, t)=\partial_x^2u^n(\pm 1, t)=0,&\quad t>0,\\
u^n(x, t)=u^n_0(x):=h_0(x)-\frac P2x^2, &\quad t=0.
 \end{cases}
 \eq
 Then as in section \ref{section:H2}, we multiply the first equation in \eqref{recur:u} by $\partial_x^4u^{n}$ and integrate by parts to obtain
 \[
 \mez\frac{d}{dt}\Vert \partial_x^2u^{n}\Vert^2_{L^2(I)}=-\int_I g^{n}|\partial_x^4u^{n}|^2-\int_I \partial_xg^{n}\partial_x^3u^{n}\partial_x^4u^{n}.
 \]
 Let us note that $\p_t\p_x^2u\in L^2([0, T]; H^{-1}(I))$ and $\p_x^2u\in L^2([0, T]; H_0^1(I))$. Employing the Gagliardo-Nirenberg inequality
 \[
 \Vert \partial_x^3 f\Vert_{L^2(I)}\le  C\Vert \partial_x^4 f\Vert^\alpha_{L^2(I)} \Vert f\Vert^{1-\alpha}_{L^2(I)}+ C\Vert f\Vert_{L^2(I)},\quad \alpha=\frac{3}{4},
 \]
 we bound
 \begin{align*}
 \la \int_I \partial_xg^{n}\partial_x^3u^{n}\partial_x^4u^{n}\ra&\le \Vert \partial_xg^{n}\Vert_{L^\infty(I)} \Vert \partial_x^3 u^{n}\Vert_{L^2(I)}\Vert \partial_x^4u^{n}\Vert_{L^2(I)}\\
& \le C\Vert g^{n}\Vert_{H^2(I)} \Vert \partial_x^4u^{n}\Vert^{1+\alpha}_{L^2(I)}\Vert u^{n}\Vert_{L^2(I)}^{1-\alpha}\\
&\quad + C\Vert g^{n}\Vert_{H^2(I)}\Vert \partial_x^4u^{n}\Vert_{L^2(I)}\Vert u^{n}\Vert_{L^2(I)}.
 \end{align*}
 Consequently
 \begin{align*}
 &\Vert \partial_x^2u^{n}\Vert^2_{L^\infty([0, T]; L^2(I))}+\Vert \sqrt{g^{n}}\partial_x^4u^{n}\Vert^2_{L^2([0, T]; L^2(I))}\\
 &\le  \Vert \partial_x^2u^{n}(0)\Vert^2_{L^2(I)}+C\Vert g^{n}\Vert_{L^\infty([0, T]; H^2(I))} \Vert u^{n}\Vert_{L^\infty([0, T]; L^2(I))}^{1-\alpha} \int_0^T\Vert \partial_x^4u^{n}\Vert^{1+\alpha}_{L^2(I)}\\
 &\quad+C\Vert g^{n}\Vert_{L^\infty([0, T]; H^2(I))}\Vert u^{n}\Vert_{L^\infty([0, T]; L^2(I))}\int_0^T\Vert \partial_x^4u^{n}\Vert_{L^2(I)}.
 \end{align*}
 Appealing to  H\"older's inequality we can gain small factors of powers of $T$:
 \[
\int_0^T\Vert \partial_x^4u^{n}\Vert^{1+\alpha}_{L^2(I)}\le T^{\frac{1-\alpha}{2}}\Vert \partial_x^4 u^{n}\Vert_{L^2([0, T]; L^2(I))}
^{\alpha+1},\quad  \int_0^T\Vert \partial_x^4u^{n}\Vert_{L^2(I)}\le T^\mez\Vert \partial_x^4 u^{n}\Vert_{L^2([0, T]; L^2(I))}.
 \]
Invoking \eqref{est:g} and \eqref{direct:recur} with $n$ replaced by $n-1$  leads to
 \begin{align*}
& \Vert \partial_x^2u^{n}\Vert_{L^\infty([0, T]; L^2(I))}+\Vert \sqrt{g^{n}}\partial_x^4u^{n}\Vert_{L^2([0, T]; L^2(I))}\\
&\le T^{\beta}\cF_3(\frac{1}{\eps}, \Vert h^{n-1}\Vert_{L^\infty([0, T]; H^2(I))}, \Vert \p_th^{n-1}\Vert_{L^1([0, T]; L^\infty)}, \Vert h_0\Vert_{H^3})+\Vert u_0\Vert_{H^2(I)}
 \end{align*}
 for some $\beta\in (0, 1)$ and for all $T\le 1$, $n\ge 1$.  We thus obtain by virtue of \eqref{boundl2},
   \[
  \Vert h^{n}\Vert_{L^\infty([0, T]; H^2)}\le T^{\beta}\cF_4\big(\frac{1}{\eps}, \Vert h^{n-1}\Vert_{L^\infty([0, T]; H^2)}, \Vert \p_th^{n-1}\Vert_{L^1([0, T]; L^\infty)}, \Vert h_0\Vert_{H^3}\big)+C\Vert h_0\Vert_{H^2}+C.
  \]
Substituting  this and \eqref{direct:recur2} (with $n$ replaced by $n-1$) in \eqref{direct:recur} yields
  \begin{align*}
 &\Vert h^{n+1}\Vert_{X(T)}\\
 &\le \cF_2\Big(T, \frac{1}{\eps}, T^{\beta}\cF_4\big(\frac{1}{\eps}, \Vert h^{n-1}\Vert_{L^\infty([0, T]; H^2)}, \Vert \p_th^{n-1}\Vert_{L^2([0, T]; H^1)},\Vert h_0\Vert_{H^3}\big)+C\Vert h_0\Vert_{H^2}+C,\\
 & \quad \sqrt{T}\cF_3\big(T, \frac{1}{\eps}, \Vert h^{n-1}\Vert_{L^\infty([0, T]; H^2)}, \Vert \p_th^{n-1}\Vert_{L^1([0, T]; L^\infty)}, \Vert h_0\Vert_{H^3}\big), \Vert h_0\Vert_{H^3}\Big)\\
 &\le \cF_5\Big(\frac{1}{\eps}, T^\gamma \cF_6\big(\Vert h^{n-1}\Vert_{L^\infty([0, T]; H^2)}, \Vert \p_th^{n-1}\Vert_{L^2([0, T]; H^1)}\big), \Vert h_0\Vert_{H^3} \Big)
 \end{align*}
 for some $\gamma\in (0, 1)$, for all $T\le 1$ and $n\ge 1$.   \end{proof}
 Now we choose 
 \[
 C_0>\max\Big\{M, \cF_5(\frac{1}{\eps}, 1, \Vert h_0\Vert_{H^3})\Big\}
 \]
 and $T_0\in (0, 1)$ satisfying
 \[
 T_0^\gamma\cF_6(C_0, C_0)\le 1,\quad \sqrt{T_0}\cF_3(\frac{1}{\eps}, C_0, C_0, \Vert h_0\Vert_{H^3})\le C_0
 \]
 then owing to \eqref{direct:recur2}, \eqref{recur:T} and the induction hypothesis,
 \[
 \Vert h^{n+1}\Vert_{X(T_0)}+ \Vert \p_th^{n+1}\Vert_{L^1([0, T_0]; L^\infty)}\le C_0
 \]
 which completes the proof of the uniform bounds \eqref{uni:hn}. In fact, using the first equation in \eqref{recur} and the uniniform boundedness of $h^n$ in $X(T_0)$ we deduce that $\partial_th^n$ is uniformly bounded in $L^2([0, T_0]; H^1(I))$.  Passing to the limit $n\to \infty$ with the use of  Aubin-Lions's lemma, we obtain a solution $h\in X(T_0)$ of \eqref{dp:nd}. Moreover, $T_0\in (0, 1)$ depends only on $\Vert h_0\Vert_{X}$ and $\eps$, and the bound 
  \[
 \Vert h\Vert_{X(T_0)}\le C_0\le \cF(\frac{1}{\eps}, \Vert h_0\Vert_{H^3})
 \]
 holds. Finally, \eqref{boundD:lin}, \eqref{identity:w}, \eqref{energy:w1} and \eqref{energy:w2} hold with $g=\sqrt{h^2+\eps^2}$ by applying Theorem \ref{Cauchy:lin} to \eqref{recur} then letting $n\to \infty$.
 \subsection{Global existence}\label{section:global-nd}
 We now iterate the above procedure over time intervals $\mathcal{T}_m$ of length less than $1$ and glue the solutions together to obtain a maximal solution $h$ defined on $[0, T^*)$ with $T^*\in (0, \infty]$.
 \begin{prop}
 For any $T<T^*$, $h$ obeys the bound
 \bq\label{apriori}
 \Vert h\Vert_{X(T)}\le \cF\big(\frac{1}{h_m(T)+\eps}, \Vert h_0\Vert_{H^3}\big),\quad h_m(T):=\inf_{I\times [0, T]}|h|.
 \eq
 \end{prop}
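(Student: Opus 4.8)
The plan is to derive \eqref{apriori} as an a priori estimate for the solution $h\in X(T)$, $T<T^*$, whose existence is already guaranteed, by revisiting the energy hierarchy of Sections~\ref{section:H2} and \ref{section:local-nd} with $g=\sqrt{h^2+\eps^2}$ and tracking that every constant depends only on $\tfrac{1}{h_m(T)+\eps}$ and $\Vert h_0\Vert_{H^3}$, with \emph{no} explicit dependence on $T$. First I record the two $T$- and $\eps$-uniform ingredients that come for free. Since $g=\sqrt{h^2+\eps^2}\ge\max(|h|,\eps)\ge\mez(h_m(T)+\eps)$, the lower bound $c_0:=\inf_{I\times[0,T]}g\ge\mez(h_m(T)+\eps)$ holds, so $\tfrac1{c_0}\lesssim\tfrac1{h_m(T)+\eps}$. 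By Theorem~\ref{Cauchy:nd}, the bound \eqref{bound:nd2} gives $\Vert h\Vert_{L^\infty([0,T];H^1)}\le C(1+\Vert h_0\Vert_{H^1})$, while \eqref{boundD:lin} gives, with $w=g\,\p_x^3h$ and $A(t):=\Vert w/\sqrt g\Vert_{L^2}^2=\int_I g|\p_x^3h|^2\,dx$, the dissipation bound $\int_0^T A(t)\,dt\le C_1:=C(1+\Vert h_0\Vert_{H^1}^2)$. Neither bound depends on $T$ or $\eps$.

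The crux is the top-order ($w$-energy) estimate, and here is the one real difficulty. Starting from the identity \eqref{identity:w} (valid for $g=\sqrt{h^2+\eps^2}$ by Theorem~\ref{Cauchy:nd}), I bound the forcing term using the nonlinear structure of $g$. From $\p_th=-\p_xw$ we get $\p_tg=-h\,\p_xw/g$, hence $|\p_tg/g|\le|\p_xw|/c_0$ because $|h|\le g$. Since $\p_xw(\pm1)=0$, Poincar\'e's inequality gives $\Vert\p_xw\Vert_{L^2}\le C\Vert\p_x^2w\Vert_{L^2}$ and then Agmon's inequality gives $\Vert\p_xw\Vert_{L^\infty}\le C\Vert\p_x^2w\Vert_{L^2}=:C\,d(t)$. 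Thus the forcing obeys $\int_I\tfrac{\p_tg}{g^2}w^2\le\Vert\p_tg/g\Vert_{L^\infty}A\le\tfrac{C}{c_0}\,d\,A$, and \eqref{identity:w} together with Young's inequality $\tfrac{C}{c_0}dA\le d^2+\tfrac{C^2}{4c_0^2}A^2$ yields, after absorbing $d^2$ into the dissipation,
\[
A(t)+\int_0^t\Vert\p_x^2w\Vert_{L^2}^2\,ds\le A(0)+\int_0^t\mu(s)\,A(s)\,ds,\qquad \mu:=\tfrac{C^2}{4c_0^2}A.
\]
The key observation is that $\mu$ is \emph{a priori} integrable in time with $\int_0^T\mu\,ds\le\tfrac{C^2}{4c_0^2}C_1$, which is finite and independent of $T$; the largeness of the coefficient $c_0^{-2}\sim(h_m(T)+\eps)^{-2}$ is harmless because it multiplies the \emph{finite} dissipation integral. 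A naive attempt to close instead by extracting $\sup_{[0,T]}A$ from $\int_0^TA^2\le C_1\sup A$ would require $c_0^{-2}$ to be small and must fail. Gr\"onwall's inequality with the $L^1_t$ weight $\mu$ then gives $\sup_{[0,T]}A+\int_0^T\Vert\p_x^2w\Vert_{L^2}^2\,ds\le A(0)\exp\!\big(\tfrac{C^2C_1}{4c_0^2}\big)$, where $A(0)\le\Vert g_0\Vert_{L^\infty}\Vert\p_x^3h_0\Vert_{L^2}^2\lesssim(1+\Vert h_0\Vert_{H^1})\Vert h_0\Vert_{H^3}^2$. Both quantities are thus bounded by $\cF\big(\tfrac1{h_m(T)+\eps},\Vert h_0\Vert_{H^3}\big)$, and since $A\ge c_0\Vert\p_x^3h\Vert_{L^2}^2$ this already controls $\Vert\p_x^3h\Vert_{L^\infty([0,T];L^2)}$.

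It remains to recover the $H^2$ and $H^5$ information and assemble the $X(T)$ norm. The $H^2$ estimate is carried out as in Section~\ref{section:H2}, now with $\p_xg=h\,\p_xh/g$ so that $\Vert\p_xg\Vert_{L^\infty}\le\Vert\p_xh\Vert_{L^\infty}\lesssim\Vert h\Vert_{H^1}^{1/2}\Vert\p_x^2h\Vert_{L^2}^{1/2}+\Vert h\Vert_{H^1}$; substituting this into the $H^2$ energy identity produces a differential inequality for $\Vert\p_x^2h\Vert_{L^2}^2$ whose forcing is again proportional to the dissipation $A$, so the same Gr\"onwall-with-$L^1_t$-weight argument closes $\Vert\p_x^2h\Vert_{L^\infty([0,T];L^2)}+\sqrt{c_0}\Vert\p_x^4h\Vert_{L^2([0,T];L^2)}$ in a $T$-independent way. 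Finally, exactly as in \eqref{w-H5}, $\Vert g\,\p_x^5h\Vert_{L^2}\le C\Vert g\Vert_{H^2}\Vert\p_x^4h\Vert_{L^2}+\Vert\p_x^2w\Vert_{L^2}$, where one checks from \eqref{dg} that $\Vert g\Vert_{H^2}\le\cF\big(\tfrac1{h_m(T)+\eps},\Vert h\Vert_{H^2}\big)$ because $\p_x^2g$ involves only the factor $g^{-1}\le c_0^{-1}$ and not $\eps^{-1}$ separately; dividing by $c_0$ and integrating in time bounds $\Vert\p_x^5h\Vert_{L^2([0,T];L^2)}$. Collecting the $H^1$, $H^2$, $\p_x^3h$ and $\p_x^5h$ bounds gives \eqref{apriori} with a function $\cF$ depending only on $\tfrac1{h_m(T)+\eps}$ and $\Vert h_0\Vert_{H^3}$, and in particular with no explicit dependence on $T$. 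The main obstacle throughout is precisely this $T$-uniformity, and it is overcome by systematically using the finite, $T$-independent dissipation integral $\int_0^TA\,dt\le C_1$ as the weight in Gr\"onwall's inequality rather than attempting to absorb supremum-in-time quantities.
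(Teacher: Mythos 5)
Your proposal is correct, and it shares the paper's overall architecture: an $H^1$ energy identity producing a $T$-independent dissipation integral, an $H^2$ estimate closed by Gr\"onwall with that integrable dissipation as weight, a $w=g\,\p_x^3h$ energy estimate at top order, and recovery of $\p_x^5h$ via \eqref{w-H5}. The one genuinely different ingredient is your closure of the top-order step. The paper bounds the forcing $\int_0^t\Vert\p_tg/g^{3/2}\Vert_{L^2}\Vert w\Vert_{L^\infty}\,ds$ in \eqref{energy:w1:nd} by Cauchy--Schwarz in time, using $\Vert w\Vert_{L^\infty}\lesssim(\Vert h\Vert_{L^\infty}+\eps)\Vert\p_x^4h\Vert_{L^2}$ and $\Vert\p_tg\Vert_{L^2}\lesssim\Vert h\Vert_{H^2}(\Vert\p_x^3h\Vert_{L^2}+\Vert\p_x^4h\Vert_{L^2})$; its top-order bound therefore feeds on the $T$-independent $L^2_t$ control of $\p_x^4h$ supplied by the $H^2$ step. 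You instead exploit the structural identity $\p_tg=-h\,\p_xw/g$ to write the forcing as $\lesssim c_0^{-1}\Vert\p_xw\Vert_{L^\infty}A$, absorb $\Vert\p_xw\Vert_{L^\infty}\lesssim\Vert\p_x^2w\Vert_{L^2}$ (legitimate, since $\p_xw(\pm1)=0$) into the dissipation by Young, and run Gr\"onwall with the weight $c_0^{-2}A\in L^1_t$. This makes the top-order estimate independent of the $H^2$ step (which you still need for $\p_x^5h$) and is, if anything, cleaner; both arguments rest on the same key point you correctly isolate, namely that the large constant $c_0^{-2}$ only multiplies the finite, $T$-independent dissipation integral inside an exponential. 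Two minor remarks: invoking \eqref{bound:nd2} and \eqref{boundD:lin} ``by Theorem \ref{Cauchy:nd}'' is formally circular, since the Proposition is part of that theorem's proof, but harmless because both are restatements of the $H^1$ identity \eqref{E1:l}, which the paper itself re-derives at the start of this argument; and passing from $\sup_t A$ to $\Vert\p_x^3h\Vert_{L^\infty_tL^2}$ via $A\ge c_0\Vert\p_x^3h\Vert_{L^2}^2$ costs an extra $c_0^{-1}$, which is simply absorbed into $\cF$.
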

 \begin{proof}
We revisit the energy estimates leading to Theorem \ref{Cauchy:lin} but with $g$ replaced by $h$. First,  the inequality \eqref{energy:l} holds,
\[
\frac{d}{dt}\int_I (\mez | \partial_xh|^2+P h)=-\int_I g| \partial_x^3h|^2\le 0.
\]
Letting $ {u=h-\frac P2 (x^2-1) - 1}$ and $g=\sqrt{h^2+\eps^2}$, as in sections \ref{section:H1} and \ref{section:H2} we have that
\bq\label{apriori:E1}
\mez\frac{d}{dt}E_1^2+D_1^2\le 0
\eq
and
\[
\mez\frac{d}{dt} E_2^2+D^2_2=-\int_I \partial_x g \partial_x^3u \partial_x^4u
\]
hold, where
\[
E_1=\Vert  \partial_xu\Vert_{L^2(I)},\quad D_1=\Vert \sqrt g \partial_x^3u\Vert_{L^2},\quad E_2=\Vert  \partial_x^2u\Vert_{L^2(I)}, \quad D_2=\Vert \sqrt g \partial_x^4u\Vert_{L^2}.
\]
In particular, we deduce as for \eqref{H1:l} that
\bq\label{apriori:H1}
\Vert h\Vert_{L^\infty([0, T]; H^1(I))}\le  { C(1+ \Vert h_0\Vert_{H^1(I)})}.
\eq
Writing $\partial_x g=\p_xh \frac hg=(\partial_x u+Px)\frac hg$ and noting that $|h|\le g$ we bound
\begin{align*}
\la \int_I \partial_x g \partial_x^3u \partial_x^4udx\ra&\le\int_I \la\partial_x u \partial_x^3u \partial_x^4u\ra dx+P\int_I\la x \partial_x^3u \partial_x^4u\ra dx\\
&\le  \frac{1}{h_m(T)+\eps}\Vert \partial_xu\Vert_{L^\infty(I)}D_1D_2+\frac{P}{h_m(T)+\eps}D_1D_2\\
&\le  \frac{1}{h_m(T)+\eps}\Vert \partial_xu\Vert_{H^1(I)}D_1D_2+\frac{P}{h_m(T)+\eps}D_1D_2\\
&\le  \frac{C}{h_m(T)+\eps}E_2D_1D_2+\frac{P}{h_m(T)+\eps}D_1D_2, \\  
&\le \mez D_2^2+ \frac{C}{h^2_m(T)+\eps^2}E_2^2D_1^2+\frac{C}{h^2_m(T)+\eps^2}D_1^2
\end{align*}
where the bound 
\[
\Vert \partial_xu\Vert_{H^1(I)}\le C\Vert \partial^2_xu\Vert_{L^2(I)},
\]
which follows from Poincar\'e-Wirtinger's inequality together with the fact that $\int_I \p_xu=0$, was used. Thus
\[
\mez\frac{d}{dt} E_2^2 +\mez D_2^2\le \frac{C}{h^2_m(T)+\eps^2}E_2^2D_1^2+\frac{C}{h^2_m(T)+\eps^2}D_1^2
\]
which combined with \eqref{apriori:E1} yields
\[
\mez \frac{d}{dt}E^2+\mez D_2^2\le \frac{C}{h^2_m(T)+\eps^2}E_2^2D_1^2\le \frac{C}{h^2_m(T)+\eps^2}E^2D_1^2
\]
with $E^2=\frac{C}{h^2_m(T)+\eps^2}E_1^2+E_2^2$. Then by the Gr\"onwall lemma,
\bq\label{apriori:E2}
\begin{aligned}
\Vert E_2\Vert_{L^\infty([0, T])}\le \Vert E\Vert_{L^\infty([0, T])}&\le E(0)\exp(\frac{C}{h^2_m(T)+\eps^2}\Vert D_1\Vert^2_{L^2([0, T])})\\
&\le E(0)\exp(\frac{C}{h^2_m(T)+\eps^2}E^2_1(0)).
\end{aligned}
\eq
It follows that
\bq\label{apriori:D2}
\begin{aligned}
\Vert D_2\Vert_{L^2([0, T])}&\le \frac{C}{h_m(T)+\eps}\Vert E_2\Vert_{L^\infty([0, T])}\Vert D_1\Vert_{L^2([0, T])}\\
&\le  \frac{C}{h_m(T)+\eps}E(0)\exp(\frac{C}{h^2_m(T)+\eps^2}E^2_1(0))E_1(0).
\end{aligned}
\eq
A combination of \eqref{apriori:H1}, \eqref{apriori:E1}, \eqref{apriori:E2} and \eqref{apriori:D2} leads to
\bq\label{apriori:H24}
\Vert h\Vert_{L^\infty([0, T]; H^2(I))}+\Vert \p_x^3h\Vert_{L^2([0, T]; L^2(I))}+\Vert \p_x^4h\Vert_{L^2([0, T]; L^2(I))}\le \cF\big(\frac{1}{h_m(T)+\eps}, \Vert h_0\Vert_{H^2}\big). 
\eq
We now turn to the $H^3$ estimate. As proved in section \ref{section:local-nd}, \eqref{energy:w1} and \eqref{energy:w2} (with $g=\sqrt{h^3+\eps^2}$) hold on each iterative time interval $\mathcal{T}_m$, and thus hold on $[0, T]$ by gluing them together. In other words, we have for {\it a.e.} $t\in [0, T]$ that
\bq\label{energy:w1:nd}
\Vert \frac{w}{\sqrt g}(\cdot, t)\Vert_{L^2(I)}\le \Vert \frac{w}{\sqrt g}(\cdot, 0)\Vert_{L^2(I)}+\mez\int_0^t\Vert \frac{\p_tg}{g^\tdm}(\cdot, s)\Vert_{L^2(I)}\Vert w(\cdot, s)\Vert_{L^\infty} ds
\eq
and
\bq\label{energy:w2:nd}
\int_0^t\Vert \p_x^2w(\cdot, s)\Vert_{L^2}^2ds\le \mez\Vert \frac{w}{\sqrt g}(\cdot, 0)\Vert_{L^2(I)}^2+\mez\int_0^t\Vert \frac{\p_tg}{g^\tdm}(\cdot, s)\Vert_{L^2(I)}\Vert \frac{w}{\sqrt g}(\cdot, s)\Vert_{L^2(I)}\Vert w(\cdot, s)\Vert_{L^\infty}ds.
\eq
But by \eqref{d3d4} it is readily seen that
\[
\Vert w\Vert_{L^\infty}\le C\Vert g\Vert_{L^\infty}\Vert \p_x^3h\Vert_{L^\infty}\le C(\Vert h\Vert_{L^\infty}+\eps)\Vert \p_x^4h\Vert_{L^2}
\]
and
\bq\label{dt:g}
\Vert \p_tg\Vert_{L^2}\le \Vert \p_th\Vert_{L^2}\le C\Vert h\Vert_{H^2}(\Vert \p_x^3h\Vert_{L^2}+\Vert \p_x^4h\Vert_{L^2}).
\eq
 Consequently
\[
\Vert \frac{w}{\sqrt g}\Vert_{L^\infty([0, T]; L^2)}\le \Vert \frac{w_0}{\sqrt g_0}\Vert_{L^2}+\frac{C}{(h_m(T)+\eps)^\tdm}A
\]
with 
\begin{align*}
A&=\Vert w\Vert_{L^\infty([0, T]; L^\infty)}\Vert \p_tg\Vert_{L^2([0, T]; L^2)}\\\
&\le C(\Vert h\Vert_{H^1}+\eps)\Vert h\Vert_{L^\infty([0, T]; H^2)}\Big(\Vert \p_x^3h\Vert_{L^2([0, T]; L^2)}\Vert \p_x^4h\Vert_{L^2([0, T]; L^2)}+\Vert \p_x^4h\Vert^2_{L^2([0, T]; L^2)}\Big)\\
&\le \cF\big(\frac{1}{h_m(T)+\eps}, \Vert h_0\Vert_{H^2}\big)
\end{align*}
in view of \eqref{apriori:H24}, and
\begin{align*}
\Vert \p_x^2w\Vert_{L^2([0, T]; L^2)}^2&\le \mez\Vert \frac{w_0}{\sqrt g_0}\Vert_{L^2}^2+\frac{C}{(h_m(T)+\eps)^\tdm}\Vert \frac{w}{\sqrt g}\Vert_{L^\infty([0, T]; L^2)}\Vert \p_tg\Vert_{L^\infty([0, T]; L^2)}\Vert w\Vert_{L^\infty([0, T]; L^\infty)}\\
&\le \mez\Vert \frac{w_0}{\sqrt g_0}\Vert_{L^2}^2+\frac{C}{(h_m(T)+\eps)^\tdm}\Big( \Vert \frac{w_0}{\sqrt g_0}\Vert_{L^2}+\frac{C}{(h_m(T)+\eps)^\tdm}A\Big)A\\
&\le \cF\big(\frac{1}{h_m(T)+\eps}, \Vert h_0\Vert_{H^2}\big).
\end{align*}
Appealing to \eqref{w-H5} with $g=h$ we deduce that
\bq\label{apriori:H3}
\Vert \p_x^3h\Vert_{L^\infty([0, T]; L^2)}+\Vert \p_x^5h\Vert_{L^\infty([0, T]; L^2)}\le \cF\big(\frac{1}{h_m(T)+\eps}, \Vert h_0\Vert_{H^3}\big)
\eq
from which  \eqref{apriori} follows.
\end{proof}
 Now \eqref{apriori} implies the global bound
\[
 \Vert h\Vert_{X(T)}\le \cF\big(\frac{1}{\eps}, \Vert h_0\Vert_{H^3}\big)
\]
for any $T<T^*$. We thus conclude that $T^*=\infty$. Furthermore, the bounds \eqref{bound:nd1} and \eqref{bound:nd2} follow from   \eqref{apriori:H1}, \eqref{apriori:H3} and \eqref{apriori:H24}.
\section{Proof of Theorem \ref{Cauchy}}
Let $h_0\in H^3$ satisfy the boundary conditions \eqref{bc} and
\[
h_{0, m}:=\inf_I h_0>0.
\]
{\bf Step 1}. (Approximate equations). For each $\eps \in (0, 1]$, let $h_\eps$ be the solution of the nondegenrate problem
\bq\label{app:nd}
\begin{cases}
 \partial_th_\eps(x, t)+(\sqrt{h_\eps^2+\eps^2}\partial_x^3h_\eps)_x(x, t)=0, &\quad(x, t)\in (-1, 1)\times (0, \infty),\\
 h_\eps(\pm 1, t)=1,\quad\partial_x^2h_\eps(\pm 1, t)=P,&\quad t>0,\\
h_\eps(x, t)=h_0(x), &\quad t=0.
 \end{cases}
 \eq
  According to Theorem \ref{Cauchy:nd}, $h^\eps\in X(T)$ for any $T>0$ and $h_\eps$ obeys the bounds 
 \begin{align}
\label{bound:app1}
&\Vert h_\eps\Vert_{X(T)}\le \cF\big(\frac{1}{h_{\eps, m}(T)+\eps},  \Vert h_0\Vert_{H^3}\big),\\
\label{bound:app2}
&\Vert h_\eps\Vert_{L^\infty([0, T]; H^1(I))}\le C (1+ \Vert h_0\Vert_{H^1(I)})
\end{align}
with
\[
h_{\eps, m}(T)=\inf_{(x, t)\in I\times [0, T]} \la h_\eps(x, t)\ra.
\]
Moreover, \eqref{boundD:lin} and \eqref{identity:w} hold with $g=\sqrt{h_\eps^2+\eps^2}$.

Using the equation for $h_{\eps}$ and \eqref{bound:app1} we get
\bq\label{uni:dthe}
\Vert \p_th_\eps\Vert_{L^2([0, T]; H^1)}\le \cF\big(\frac{1}{h_{\eps, m}(T)+\eps},  \Vert h_0\Vert_{H^3}\big)
\eq
for all $T\le 1$. This implies
\bq\label{lower:d}
\begin{aligned}
h_\eps(x, t)&\ge h_\eps(x, 0)-\vert \int_0^t\p_t h_\eps(x, s)ds\vert \\
&\ge h_{0, m}-\sqrt{T}\Vert \p_th_\eps\Vert_{L^2([0, T]; L^\infty)}\\
&\ge h_{0, m}-\sqrt T\cF\big(\frac{1}{h_{\eps, m}(T)+\eps},  \Vert h_0\Vert_{H^3}\big)\quad\forall t\le T\le 1.
\end{aligned}
\eq
{\bf Step 2.} (Bootstrap) Denote 
\[
d_\eps(T)=\frac{1}{h_{\eps,m}(T)+\eps},\quad T\le 1.
\]
We choose $C_0$ sufficiently large and $T_0$ sufficiently small so that
\begin{align}
\label{C0:1}
C_0>\frac{1}{h_{0, m}},\\
\label{C0:2}
\sqrt{T_0}\cF(C_0, \Vert h_0\Vert_{H^3})\le \frac{h_{0, m}}{2},\\
\label{C0:3}
C_0>\frac{1}{h_{0, m}-\sqrt{T_0}\cF_2(C_0,  \Vert h_0\Vert_{H^3})}.
\end{align}
This is possible by taking 
\[
C_0>\frac{2}{h_{0, m}},\quad\sqrt{T_0}\cF_2(C_0,  \Vert h_0\Vert_{H^3})\le \frac{h_{0, m}}{2}.
\]
We claim that 
\bq\label{uni:d}
d_\eps(T_0)\le C_0\quad\forall \eps>0.
\eq
Indeed, if \eqref{uni:d} is not true then there exists $\eps_0>0$ such that $d_{\eps_0}(T_0)>C_0$. By \eqref{C0:1},
\[
d_{\eps_0}(0)=\frac{1}{h_{0,m }+\eps}\le \frac{1}{h_{0,m}}<C_0.
\]
By the continuity of $d_{\eps_0}(\cdot)$, there exists $T_1\in (0, T_0)$ such that $d_{\eps_0}(T_1)=C_0$. Then \eqref{C0:2} implies 
\[
\sqrt{T_1}\cF(d_{\eps_0}(T_1), \Vert h_0\Vert_{H^3})=\sqrt{T_1}\cF(C_0, \Vert h_0\Vert_{H^3})\le \sqrt{T_0}\cF(C_0, \Vert h_0\Vert_{H^3})\le \frac{h_{0,m}}{2}.
\]
We deduce from \eqref{lower:d} that 
\[
\inf_{I\times [0, T_1]}h_{\eps_0}\ge \mez h_{0, m}>0
\]
and
\[
h_{\eps_0, m}(T_1)\ge h_{0, m}-\sqrt{T_0}\cF(C_0, \Vert h_0\Vert_{H^3})>0.
\]
Hence
\[
C_0=d_{\eps_0}(T_1)=\frac{1}{h_{\eps_0, m}(T_1)+\eps_0}\le \frac{1}{h_{0, m}-\cF(C_0, \Vert h_0\Vert_{H^3})}.
\]
This contradicts \eqref{C0:3}, and thus we conclude the claim \eqref{uni:d}. Coming back to \eqref{lower:d} we find 
\[
\inf_{I\times [0, T_0]}h_\eps \ge \mez h_{0, m}\quad\forall \eps>0.
\]

{\bf Step 3.} (Conclusion of the argument)
Inserting \eqref{uni:d} into \eqref{bound:app1} and \eqref{uni:dthe} yields
\[
\Vert h_\eps\Vert_{X(T_0)}+\Vert\p_th_\eps\Vert_{L^2([0, T_0]; H^1(I))}\le M_0
\]
for some $M_0$ depending only on $\Vert h_0\Vert_{H^3(I)}$ and $h_{0, m}$. Set $\eps=\frac{1}{n}$ and rename $h_n=h_{\eps}$, $d_n=d_\eps$. According to Aubin-Lions's lemma, there exists $h\in X(T_0)$ such that
\begin{align}
\label{conv1}
& h_n\rightharpoonup h\quad\text{in}~ L^2([0, T_0]; H^5(I)),\\
\label{conv2}
&h^n\rightharpoonup *~h\quad\text{in}~L^\infty([0, T_0]; H^3(I)),\\
\label{conv3}
& h_n\to h\quad\text{in}~ C([0, T_0]; C^2(\overline{I})).
\end{align}
Moreover, it is easy to check that $h$ solves the problem \eqref{dp0}-\eqref{bc}.
Letting $\eps\to 0$ in \eqref{lower:d} we find
\[
\inf_{I\times [0, T_0]}h\ge \mez h_{0, m}>0.
\]
Next, it follows from \eqref{bound:app1}  and the convergences \eqref{conv1}, \eqref{conv2} that
 \[
\Vert h\Vert_{X(T_0)}\le \liminf_{n\to \infty}\Vert h_n\Vert_{X(T_0)}\le \liminf_{n\to \infty} {\cF\big(\frac{1}{h_{n, m}(T_0)+\frac 1n}, \Vert h_0\Vert_{H^3}\big).}
\]
We can replace $\liminf$ by $\lim$ of a subsequence $n_k\to \infty$. For some $(x_k, t_k)\in I\times [0, T_0]$, $h_{n_k, m}(T_0)= h_{n_k}(x_k, t_k)$.
By the compactness of $[-1, 1]\times [0, T_0]$, there exists a subsequence  $n_{k_j}\to\infty$ such that
\[
(x_{k_j}, t_{k_j})\to (x_0, t_0)\in [-1, 1]\times [0, T_0],\quad h_{n_{k_j}}(x_{k_j}, t_{k_j})\to h(x_0, t_0)\ge \inf_{I\times [0, T_0]}h
\]
where \eqref{conv3} was used in the second convergence. Consequently
\begin{align*}
\Vert h\Vert_{X(T_0)}&\le  {\cF\big(\frac{1}{\lim_{j\to \infty}h_{n_{k_j}}(x_{k_j}, t_{k_j})+\frac{1}{n_{k_j}}}, \Vert h_0\Vert_{H^3}\big)}\\
&\le  {\cF\big(\frac{1}{\inf_{I\times [0, T_0]}h}, \Vert h_0\Vert_{H^3}\big)}
\end{align*}
where the fact that $\cF$ is increasing was used. 

In addition, passing to the limit in \eqref{boundD:lin} and \eqref{identity:w} leads to \eqref{D:L1}  and \eqref{identity:D} repsectively.

Finally, because $h$ is positive on $I$, it is unique by the same argument as in section \ref{section:unique:nd}.
\section{Proof of Proposition \ref{prop:leaste}}
 Let $h\in H^1(I)$ be a nonnegative function satisfying $h(\pm 1)=1$. We have
\begin{align*}
E(h(t))=&\mez\int_I|\partial_x h|^2dx+P\int_I hdx\\
&=\mez\int_I|\partial_x(h-h_P)|^2dx+\mez\int_I |\partial_xh_P|^2dx+\int_I \partial_x(h-h_P)\partial_xh_Pdx+P\int_I hdx.
\end{align*}
Integration by parts in the cross term gives
\[
\int_I \partial_x(h-h_P)\partial_xh_Pdx=(h-h_P)\partial_xh_P\vert_{-1}^1-\int_I (h-h_P)\partial_x^2h_Pdx=-\int_I (h-h_P)\partial_x^2h_Pdx
\]
since $h=h_P$ at $\pm 1$.

\underline{Case 1:} $P\in (0, 2]$. In this case $\partial_x^2h_P=P$, and thus
\[
E(h(t))=\mez\int_I|\partial_x(h-h_P)|^2dx+\mez\int_I |\partial_xh_P|^2dx+P\int_Ih_P\ge E(h_P).
\]
Moreover, $E(h(t))=E(h_P)$ if and only if $\partial_x(h-h_P)=0$ which is equivalent to $h=h_P$ by the boundary condition $h(\pm 1)=h_P(\pm 1)=1$.

\underline{Case 2:} $P>2$. Then $\partial_x^2h_P(x)=P$ if $|x|>x_P$ and $=0$ if $|x|<x_P$. Thus
\begin{align*}
E(h(t))&=\mez\int_I|\partial_x(h-h_P)|^2dx+\mez\int_I |\partial_xh_P|^2dx+P\int_Ih-P\int_{x_P<|x|<1}(h-h_P)\\
&=\mez\int_I|\partial_x(h-h_P)|^2dx+\mez\int_I |\partial_xh_P|^2dx+P\int_{x_P<|x|<1}h_P+P\int_{-x_P}^{x_P}h\\
&=\mez\int_I|\partial_x(h-h_P)|^2dx+\mez\int_I |\partial_xh_P|^2dx+P\int_Ih_P+P\int_{-x_P}^{x_P}h\\
&\ge E(h_P).
\end{align*}
Moreover, $E(h(t))=E(h_P)$ if and only if 
\[
\begin{cases}
\partial_x(h-h_P)=0\quad\text{on}~ I,\\
h=0\quad\text{on}~(-x_P, x_P).
\end{cases}
\]
Again, owing to the boundary condition $h(\pm 1)=h_P(\pm 1)=1$, this is equivalent to $h(x, \cdot)=h_P(x)$  for $|x|>x_P$ and $h=0$ on $(-x_P, x_P)$. In other words, $h=h_P$.
\section{Proof of Theorem \ref{theo:relax}}
 Let $h_n$ be sequence of nonnegative $H^3(I)$ functions satisfying \eqref{bc}. Assume that $h_n$ is uniformly bounded in $H^1(I)$ and $D(h_n)\to 0$. Note that in view of  {the Gagliardo-Nirenberg inequality 
\[ \Vert f\Vert_{L^2(I)}\le C\Vert \partial_xf\Vert_{L^2}^\mez\Vert f\Vert_{L^1}^\mez+C\Vert f\Vert_{L^1(I)}, \]
the energy} $E$ defines a norm which is equivalent to the $H^1(I)$ norm. Then, by extracting a subsequence, still denoted $t_n$, we have $h_n\rightharpoonup h_\infty$ in $H^1(I)$. In particular, 
\bq\label{conv:C}
h_n\to h_\infty\quad\text{in}~C(\overline I).
\eq
Observe that if at some $x_0\in \overline I=[-1, 1]$, $h_\infty(x_0)>0$ then for some $\delta>0$, $h_\infty \ge \frac{2}{3}h_\infty(x_0)$ on $I_{x_0, \delta}:=(x_0-\delta, x_0+\delta)\cap I$. By \eqref{conv:C}, $h_n\ge \mez h_\infty(x_0)$ on $I_{x_0,\delta}$ for sufficiently large $n$. By the definition of $D(h)$ we get
\bq\label{H3:loc}
\int_{I_{x_0,\delta}}|\partial_x^3h_n(x)|^2dx\to 0.
\eq
By interpolation, the quantity 
\[
N_3(u):=\int_{I_{x_0,\delta}}(|u|^2+|\partial_x^3u|^2)dx
\]
defines a norm which is equivalent to the $H^3(I_{x_0,\delta})$ norm. It follows from \eqref{conv:C} and \eqref{H3:loc} that  $h_n\rightharpoonup h_\infty$ in $N_3$ and
\begin{align*}
N_3(h_\infty)\le \liminf_{n\to\infty}N_3(h(t_n))&=\lim_{n\to \infty}\int_{I_{x_0,\delta}}|h_n(x)|^2dx+\lim_{n\to \infty}\int_{I_{x_0,\delta}}|\partial_x^3h_n(x)|^2dx\\
&=\int_{I_{x_0,\delta}}|h_\infty(x)|^2dx,
\end{align*}
hence
\[
\int_{I_{x_0,\delta}}|\partial_x^3h_\infty(x)|^2dx=0.
\]
We have proved that
\begin{lemm}\label{lemm1:relax}
If $h_\infty(x_0)>0$, $x_0\in \overline I$, then there exists a neighborhood $I_{x_0, \delta}=(x_0-\delta, x_0+\delta)\cap I$ of $x_0$ in which $h_n, h_\infty$ are positive, $\partial_x^3h_\infty=0$, and $h_n\to h_\infty$ in $H^3(I_{x_0, \delta})$. Consequently, $\partial_x^3 h_\infty=0$ on $Z=\{x\in  I: h_\infty(x)>0\}$, hence  $h_\infty$ is either a parabola or a straight line on each connected component (which are open intervals) of $Z$.
\end{lemm}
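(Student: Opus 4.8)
The plan is to localize the vanishing of the dissipation $D(h_n)$ to a neighborhood of $x_0$ on which $h_\infty$ stays uniformly away from zero, and then to combine the resulting $L^2$ smallness of $\partial_x^3 h_n$ with weak lower semicontinuity to force $\partial_x^3 h_\infty$ to vanish there. First, since $h_\infty$ is continuous as the uniform limit \eqref{conv:C} and $h_\infty(x_0)>0$, there is $\delta>0$ with $h_\infty\ge\frac{2}{3} h_\infty(x_0)$ on $I_{x_0,\delta}$; by the uniform convergence \eqref{conv:C}, $h_n\ge \mez h_\infty(x_0)$ on $I_{x_0,\delta}$ for all large $n$. Using this lower bound together with the nonnegativity of the integrand defining $D(h_n)$, I would estimate
\[
\mez h_\infty(x_0)\int_{I_{x_0,\delta}}|\partial_x^3 h_n|^2\,dx\le \int_{I_{x_0,\delta}} h_n|\partial_x^3 h_n|^2\,dx\le D(h_n)\longrightarrow 0,
\]
which is precisely \eqref{H3:loc}.

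Next I would promote this to $\partial_x^3 h_\infty=0$ on $I_{x_0,\delta}$. The sequence $h_n$ is bounded in $L^2(I_{x_0,\delta})$ by \eqref{conv:C} and has $\partial_x^3 h_n$ bounded in $L^2(I_{x_0,\delta})$ by the previous display; Gagliardo–Nirenberg interpolation then controls $\partial_x h_n$ and $\partial_x^2 h_n$, so $h_n$ is bounded in $H^3(I_{x_0,\delta})$ and $h_n\rightharpoonup h_\infty$ weakly there along a subsequence. By weak lower semicontinuity of the seminorm $u\mapsto \Vert\partial_x^3 u\Vert_{L^2(I_{x_0,\delta})}$,
\[
\int_{I_{x_0,\delta}}|\partial_x^3 h_\infty|^2\,dx\le \liminf_{n\to\infty}\int_{I_{x_0,\delta}}|\partial_x^3 h_n|^2\,dx=0,
\]
so $\partial_x^3 h_\infty=0$. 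To upgrade to strong convergence I would invoke the equivalence of the interpolation quantity $N_3$ with the $H^3(I_{x_0,\delta})$ norm: since $\partial_x^3 h_n\to 0=\partial_x^3 h_\infty$ in $L^2$ and $h_n\to h_\infty$ in $C\subset L^2$, one has $N_3(h_n)\to N_3(h_\infty)$, and convergence of norms together with weak convergence in the Hilbert space $\big(H^3(I_{x_0,\delta}),N_3\big)$ yields $h_n\to h_\infty$ strongly in $H^3(I_{x_0,\delta})$.

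Finally I would pass from the local statement to the global one. The set $Z=\{x\in I:h_\infty(x)>0\}$ is open because $h_\infty$ is continuous, and every point of $Z$ (including the endpoints $\pm1$, where $h_\infty=1>0$ by \eqref{bc}) admits a neighborhood on which $\partial_x^3 h_\infty=0$; hence $\partial_x^3 h_\infty\equiv 0$ on $Z$. On each connected component, which is an open interval, $\partial_x^3 h_\infty=0$ forces $h_\infty(x)=ax^2+bx+c$, that is, a parabola when $a\ne0$ and a straight line when $a=0$. The step I expect to be most delicate is the local one: converting the $L^2$ smallness of the \emph{single} derivative $\partial_x^3 h_n$ into full $H^3$ control relies on the interpolation bound for the intermediate derivatives, and the upgrade from weak to strong convergence must be carried out carefully through the equivalent norm $N_3$, with attention paid to the one-sided intervals $I_{x_0,\delta}$ occurring at the boundary points $x_0=\pm1$.
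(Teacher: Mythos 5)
Your proposal is correct and follows essentially the same route as the paper: localize the dissipation using the uniform lower bound on $h_n$ near $x_0$ to get \eqref{H3:loc}, use the interpolation-based equivalence of $N_3$ with the $H^3(I_{x_0,\delta})$ norm, and combine weak convergence with convergence of the $N_3$ norms to conclude $\partial_x^3 h_\infty=0$ and strong $H^3$ convergence. The only cosmetic difference is that you apply lower semicontinuity directly to the seminorm $\Vert\partial_x^3\cdot\Vert_{L^2}$ whereas the paper phrases it via $N_3(h_\infty)\le\liminf N_3(h_n)$; these are interchangeable.
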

The next lemma rules out the possibility that $h_n$ goes down to $0$ at a non-zero angle.
\begin{lemm}\label{lemm2:relax}
Let $x_0\in I$ and $J=(x_0, x_0+\delta)\subset I$. Let $k\in C^2(J)$ be  such that $k> 0$ on $J$ and $k$, $\partial_xk$, $\partial_x^2k$ are right-continuous at $x_0$ with $k(x^+_0)=0$ and $\partial_xk(x_0^+)\ne 0$.  Let $k_n$ be a sequence of nonnegative functions in  $H^3(I)$ such that $k_n(\pm 1)=c>0$ and $k_n\rightarrow k$ in $C^2(J)$. Then, 
\[
\int_I k_n |\partial_x^3k_n|^2\not\rightarrow 0.
\]
The same conclusion holds if $J$ is placed by $(x_0-\delta, x_0)\subset I$ and $x_0^+$ is replaced by $x_0^{-}$ in the assumptions on $k$.
\end{lemm}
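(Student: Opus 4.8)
The plan is to argue by contradiction: I would assume $\int_I k_n|\partial_x^3 k_n|^2\to 0$ and derive a contradiction with the geometry of $k$ near $x_0$. Since $k>0$ on $J$ while $k(x_0^+)=0$, right–continuity forces $a:=\partial_x k(x_0^+)>0$. Fix a small $\eta>0$ and set $\beta=x_0+\eta\in J$; the $C^2(J)$ convergence $k_n\to k$ gives, for $n$ large, $\partial_x k_n(\beta)\ge a/2$, $|\partial_x^2 k_n(\beta)|\le B$ and $k_n(x)\ge\frac a2(x-x_0)$ on $[x_0,\beta]$, with $B$ independent of $n,\eta$. The first input I would extract is a forced turning just to the left of $x_0$: because $k_n\ge 0$ on $I$ and $k_n(x_0)\to 0$, the mean slope of $k_n$ on any fixed interval $[x_0-t,x_0]$ is at most $k_n(x_0)/t\to 0$, so $\partial_x k_n$ must drop below $a/4$ arbitrarily close to $x_0$. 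Let $\bar x_n<x_0$ be the largest point with $\partial_x k_n(\bar x_n)=a/4$. Then $\partial_x k_n\ge a/4$ on $(\bar x_n,x_0]$, whence $k_n(\bar x_n)\le k_n(x_0)\to 0$, and integrating the slope bound gives $\tfrac a4\,\ell_n\le k_n(x_0)$ for $\ell_n:=x_0-\bar x_n$; in particular $\ell_n\to 0$.

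The heart of the argument is a weighted estimate anchored at this turning point. Integrating by parts,
\[
\partial_x k_n(\beta)-\partial_x k_n(\bar x_n)=(\beta-\bar x_n)\,\partial_x^2 k_n(\beta)-\int_{\bar x_n}^{\beta}(x-\bar x_n)\,\partial_x^3 k_n\,dx,
\]
and I would estimate the last integral by Cauchy--Schwarz against the degenerate weight $k_n$,
\[
\Big|\int_{\bar x_n}^{\beta}(x-\bar x_n)\,\partial_x^3 k_n\,dx\Big|\le\Big(\int_{\bar x_n}^{\beta}\frac{(x-\bar x_n)^2}{k_n}\,dx\Big)^{1/2}\Big(\int_{\bar x_n}^{\beta}k_n|\partial_x^3 k_n|^2\,dx\Big)^{1/2}\le\Big(\int_{\bar x_n}^{\beta}\frac{(x-\bar x_n)^2}{k_n}\,dx\Big)^{1/2}D_n^{1/2},
\]
where $D_n:=\int_I k_n|\partial_x^3 k_n|^2\to 0$. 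The crucial design choice is that the weight $(x-\bar x_n)^2$ vanishes to second order exactly at the anchor $\bar x_n$, which is meant to compensate the degeneracy $k_n\to 0$ near the touchdown. Since the left-hand side of the identity tends to $\partial_x k(\beta)-\tfrac a4\ge\tfrac a4>0$ while $(\beta-\bar x_n)\partial_x^2 k_n(\beta)=O(\eta+\ell_n)$, a contradiction will follow once the weighted integral is shown to stay bounded.

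The main obstacle, and the only genuinely delicate step, is thus the uniform control of $\int_{\bar x_n}^{\beta}(x-\bar x_n)^2/k_n\,dx$ as $k_n$ collapses to $0$. On $(\bar x_n,x_0]$ I would use $k_n(x)\ge\frac a4(x-\bar x_n)$, so the integrand is $\le \tfrac4a(x-\bar x_n)$ and the contribution is $O(\ell_n^2)\to 0$. On $[x_0,\beta]$ I would write $(x-\bar x_n)=(x-x_0)+\ell_n$ and use $k_n(x)\ge\frac a2(x-x_0)$; the only non-integrable-looking piece is $\ell_n^2\int_{x_0}^{\beta}\frac{dx}{k_n}$, which produces a factor $\ell_n^2\log\frac{1}{k_n(x_0)}$, and this is killed by the relation $\ell_n\lesssim k_n(x_0)$ established above, since $k_n(x_0)^2\log\frac1{k_n(x_0)}\to 0$. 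Granting this, the displayed identity yields $\tfrac a4\le B\,\eta+o(1)$, which is false for $\eta<a/(8B)$; the one-sided variant on $(x_0-\delta,x_0)$ treats the mirror case verbatim. I expect the bookkeeping of the logarithmic weight across the touchdown to be the one point requiring care, everything else reducing to the Poincar\'e/Taylor-type manipulations already used in the paper.
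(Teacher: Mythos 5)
Your argument is correct, but it is genuinely different from the one in the paper. The paper also argues by contradiction, but it exploits the exact antiderivative structure $\int k_n\partial_x^3k_n\,dx=\bigl[k_n\partial_x^2k_n-\tfrac12(\partial_xk_n)^2\bigr]$: it anchors the integration at the \emph{global minimum} $x_n$ of $k_n$ (where $\partial_xk_n(x_n)=0$ and $k_n(x_n)\partial_x^2k_n(x_n)\ge0$), evaluates the other endpoint at a fixed $x_0+\eps$ where the boundary term converges to $k(x_0+\eps)\partial_x^2k(x_0+\eps)-\tfrac12(\partial_xk(x_0+\eps))^2<0$, and uses the hypothesis $\int_I k_n|\partial_x^3k_n|^2\to0$ only through the weaker consequence $\int_{I'}k_n\partial_x^3k_n\to0$ (Cauchy--Schwarz with one factor of $k_n$ absorbed into $\Vert k_n\Vert_{L^\infty}$), so no quantitative information about where $k_n$ becomes small is needed. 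You instead anchor at a slope-level-set point $\bar x_n$ just to the left of $x_0$, use a second-order Taylor identity for $\partial_xk_n(\beta)-\partial_xk_n(\bar x_n)$, and pay for the full weight $k_n$ in Cauchy--Schwarz, which forces the delicate verification that $\int(x-\bar x_n)^2/k_n$ stays bounded across the touchdown. That step is correct as you set it up, with one presentational caveat: the stated bound $k_n(x)\ge\frac a2(x-x_0)$ alone makes $\ell_n^2\int_{x_0}^\beta k_n^{-1}\,dx$ divergent; you need the slightly stronger $k_n(x)\ge k_n(x_0)+\frac a2(x-x_0)$, together with $k_n(x_0)\ge\frac a4\ell_n>0$, to produce the factor $\log(1/k_n(x_0))$ that you then kill via $\ell_n\lesssim k_n(x_0)$ --- your text makes clear you intend exactly this. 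In sum, the paper's route is shorter and softer; yours is more quantitative (it would yield an explicit lower bound on $\liminf_n\int_I k_n|\partial_x^3k_n|^2$ in terms of $\partial_xk(x_0^+)$) and avoids both the global-minimum trick and the sign information at that minimum, at the cost of the weighted-integral bookkeeping.
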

\begin{proof}
Assume by contradiction 
\bq\label{ruleout:0}
\int_I k_n |\partial_x^3k_n|^2\to 0.
\eq
 Then in view of H\"oder's inequality and the boundedness of $k_n$ in $L^\infty(I)$, we have for any $I'\subset I$ that
\[
\la \int_{I'} k_n \partial_x^3k_n\ra\le \sqrt{|I'|}\Big(\int_{I'} k^2_n |\partial_x^3k_n|^2dx\Big)^\mez\le \sqrt{|I'|}\sup_n \Vert k_n\Vert_{L^\infty(I)}\Big(\int_{I'} k_n |\partial_x^3k_n|^2dx\Big)^\mez
\]
from which it follows that 
\bq\label{ruleout:1}
\int_{I'} k_n \partial_x^3k_n\to 0\quad\forall I'\subset I.
\eq
Since
\[
k(x_0^+)\partial_x^2k(x_0^+)-\frac{1}{2}(\partial_xk(x_0^+))^2=-\frac{1}{2}(\partial_xk(x_0^+))^2<0
\]
there exists $\eps\in (0, \delta)$ so small that $\partial_xk(x_0+\eps)\ne 0$ and
\[
k(x_0+\eps)\partial_x^2k(x_0+\eps)-\frac{1}{2}(\partial_xk(x_0+\eps))^2<0.
\]
Here, the assumptions that $k\in C^2(J)$ and $k, \partial_xk, \partial_x^2k$ are right continuous at $x_0$ were used. We note that $k_n(x)\ge c>0$ on $J_1=(x_0+\eps, x_0+\delta)$ for all $n$. This combined with \eqref{ruleout:0} yields $\int_{J_1}|\partial_x^3k_n|^2\to 0$, and thus $k_n\to k$ in $H^3(J_1)$ since we know $k_n\to k$ in $C^0(J_1)$. In particular, $k\in C^2(\overline{J}_1)$ and 
\[
k_n(x_0+\eps)\to k(x_0+\eps)>0,\quad \partial_xk_n(x_0+\eps)\to \partial_xk(x_0+\eps)\ne 0,\quad \partial_x^2k_n(x_0+\eps)\to \partial_x^2k(x_0+\eps).
\]
 Let $x_n$ be the global minimum of $k_n$ on $\overline I$. We know that $k_n\ge 0$, $k_n(\pm 1)=c>0$ and $k_n(x_0)\to k(x_0)=0$, hence $x_n\in I$ for $n$ sufficiently large. Then $\partial_xk(x_n)=0$ and $\partial_x^2k_n(x_n)>0$. Now we compute
\begin{align*}
\int_{x_n}^{x_0+\eps} k_n \partial_x^3k_n&=k_n\partial_x^2k_n\Big|_{x_n}^{x_0+\eps}-\int_{x_n}^{x_0+\eps} \partial_xk_n\partial_x^2k_n\\
&=k_n(x_0+\eps)\partial_x^2k_n(x_0+\eps)-k_n(x_n)\partial_x^2k_n(x_n)-\frac{1}{2}(\partial_xk_n(x_0+\epsilon))^2+\frac{1}{2}(\partial_xk_n(x_n))^2\\
&=k_n(x_0+\eps)\partial_x^2k_n(x_0+\eps)-k_n(x_n)\partial_x^2k_n(x_n)-\frac{1}{2}(\partial_xk_n(x_0+\eps))^2.
\end{align*}
Since $k_n(x_n)\partial_x^2k_n(x_n)\ge 0$,  the right-hand side is smaller than or equal to 
\[
k_n(x_0+\eps)\partial_x^2k_n(x_0+\eps)-\frac{1}{2}(\partial_xk_n(x_0+\eps))^2
\]
which converges to 
\[
k(x_0+\eps)\partial_x^2k(x_0+\eps)-\frac{1}{2}(\partial_xk(x_0+\eps))^2<0
\]
while the left-hand side converges to $0$, according to \eqref{ruleout:1}. This contradiction concludes the proof. 
\end{proof}
\huy{}We now proceed to show $h_\infty=h_P$. First, $h_\infty(1)=\lim h_n(1)=1$.  By Lemma \ref{lemm1:relax}, there exists $\delta_0\in (0, 1)$ such that $h_n\to h_\infty$ in $H^3((1-\delta_0, 1))$, $h_\infty>0$ and $\partial_x^3h_\infty=0$ on $(1-\delta_0, 1)$. In particular,  $h_n\to h_\infty$ in $C^2([1-\delta_0, 1])$ and $\partial_x^2h_\infty(1)=\lim\partial_x^2h_n(1)=P$.  Let $J=(1, 1-\delta)$ be the connected component of $Z=\{x\in I: h_\infty(x)>0\}$ whose closure contains $1$. Then $h_\infty$ is a parabola of the form 
\bq\label{formh}
h_\infty(x)=\frac{P}{2}x^2+ax+b,\quad \frac{P}{2}+a+b=1
\eq
on $J$.

\underline{Case 1}: $P\in (0, 2)$. We claim that $\delta>1$. Assume by contradiction $\delta\le1$. Then $h_\infty(x_0)=0$ with $x_0:=1-\delta\in [0, 1)$. According to Lemma \ref{lemm2:relax}, $\partial_xh_\infty(x_0)=0$. This is equivalent to
\[
\begin{cases}
\Delta:=a^2-2P(1-a-\frac{P}{2})=(a+P)^2-2P=0,\\
x_0=-\frac{a}{P},
\end{cases}
\]
where the first condition is equivalent to $a=a_1=\sqrt{2P}-P$ or $a=a_2=-\sqrt{2P}-P$. If $a=a_1$ then  $x_0=-\frac{\sqrt{2P}-P}{P}=1-\sqrt\frac{2}{P}<0$. If $a=a_2$ then $x_0=\frac{\sqrt{2P}+P}{P}>1$.  Both cases being impossible, we conclude that $\delta>1$. In particular, $h$ assumes the form \eqref{formh} on $[-\eps, 1]$ with some $\eps>0$. 

Similarly, if we start from $x=-1$ we also have that $h_\infty(x)=\frac{P}{2}x^2+a'x+b'$ for $x\in [-1, \eps']$ for some $\eps'\in (0, 1)$ and $a', b'\in \Rr$. Necessarily $ax+b=a'x+b'$ on $[-\eps, \eps']$, and thus  $(a', b')=(a, b)$. In other words, $h_\infty$ assumes the form \eqref{formh} on the whole interval $[-1, 1]$. Equalizing $h_\infty(-1)=h_\infty(1)=1$ leads to $a=0$. We thus conclude that 
\[
h(x)=\frac{P}{2}(x^2-1)+1=h_P\quad \text{on}~[-1, 1].
\]
\underline{Case 2:} $P\ge 2$. Arguing as in Case 1 we find $\delta \le 1$ and $h_\infty(x_0)=0$ with 
\[
x_0=1-\delta=1-\sqrt{\frac{2}{P}}=x_P\in [0, 1),
\]
and $a=\sqrt{2P}-P$.

When $P=2$, $x_0=0$ and $a=0$. Hence $h_\infty(x)=x^2$ on $[0, 1]$. A similar argument also gives $h_\infty(x)=x^2$ on $[-1, 0]$, hence $h_\infty=h_P$.

Consider now the case $P>2$. Then $x_0=x_P\in (0, 1)$ and 
\[
h_\infty(x)=\frac{P}{2}x^2+ax+b=\frac{P}{2}x^2+(\sqrt{2P}-P)x+1-\sqrt{2P}+\frac{P}{2}=\frac{P}{2}(x-x_P)^2
\]
on $[x_P, 1]$.  We claim that $h_\infty=0$ on $[0, x_P)$, then by symmetry $h_\infty=h_P$. Assume by contradiction $h_\infty(x_1)>0$ for some $x_1\in [0, x_P)$. Let $(a, b)\subset I$ be the connected component of $Z=\{x\in I:h_\infty>0\}$ that contains $x_1$. Necessarily $h_\infty(b)=0$ and $b\le x_P$. By Lemma \ref{lemm1:relax}, $h_\infty$ is either a parabola or a straight line  $(a, b)$. Let us show that both cases are impossible. Indeed, if $h_\infty$ is a straight line on $(a, b)$ then $h_\infty$ hits $0$ at $x=b$ (from the left) with an angle, which is impossible according to Lemma \ref{lemm2:relax}. Assume now that $h_\infty$ is a parabola on $(a, b)$. Since $h_\infty$ must touch down from the left of $b$ at zero angle, the only possibility is that the parabola  $\frac{P}{2}x^2+ax+b$ is positive while its slope is negative on $(-\infty, b)$. Thus $h_\infty(x)=\frac{P}{2}x^2+ax+b$ on the whole interval $[-1, b]$. But then $h_\infty(-1)=h_\infty(1)=1$ yields $a=0$ which contradicts the fact that $a=\sqrt{2P}-P<0$. Therefore, $h_\infty=h_P$ when $P>2$. \huy{}

By Lemma \ref{lemm1:relax}, $h_n\to h_P$ in $H^3_{loc}(\{x: h_P(x)>0\})$ for any $P>0$. Furthermore, when $P\in (0, 2)$, $h_P>0$ on $I$ and one can take in Lemma \ref{lemm1:relax} $I_{x_0, \delta}=I$ for any $x_0\in I$, hence $h_n\to h_P$ in $H^3(I)$. We have actually proved that any subsequence of $(h_n)$ has a subsequence with desired convergence properties. Because the limit is unique (and is equal to $h_P)$ we conclude that in fact the whole sequence $h_n$ has those properties.
\section{Proof of Theorem \ref{coro:stability}}
{\bf Part 1.} Let $P\in (0, 2)$, and let $h_0\in H^3(I)$ satisfy \eqref{bc} and $\inf_I h_0>0$. According to Theorem \ref{Cauchy}, there exist a maximal time of existence $T^*\in (0, \infty]$ and a unique solution $h\in X(T)$ with $\inf_{I\times [0, T]}h>0$ for any $T<T^*$. Set $u=h-h_P$, then because $\partial_x^3h_P=0$ we have
\bq\label{pde:sta}
\begin{cases}
  \partial_tu(x, t)+\partial_x(h\partial_x^3u)(x, t)=0, &\quad(x, t)\in I\times (0, T^*),\\
 u(\pm 1, t)=\partial_x^2u(\pm 1, t)=0,&\quad t>0.
 \end{cases}
 \eq
Multiplying the first equation in \eqref{pde:sta} by $-\partial_x^2u$ and integrating by parts, we obtain as in section \ref{section:H1},
\bq\label{stab:dt}
\mez\frac{d}{dt}\Vert \partial_x u(\cdot, t)\Vert^2_{L^2(I)}=-\int_I h(t, x)|\partial_x^3u(x, t)|^2dx,\quad t\in (0, T^*).
\eq
In particular,
\[
\Vert \partial_x u(\cdot, t)\Vert_{L^2(I)}\le \Vert \partial_x u(\cdot, 0)\Vert_{L^2(I)},\quad t\in (0, T^*).
\]
Since $u(\pm 1, \cdot)=0$, Poincar\'e's inequality together with the embedding $H^1(I)\subset C(I)$ yields
\[
\Vert u(\cdot, t)\Vert_{L^\infty(I)}\le C_1\Vert \partial_x u(\cdot, t)\Vert_{L^2(I)}\le C_1\Vert \partial_x u(\cdot, 0)\Vert_{L^2(I)},\quad t\in (0, T^*).
\]
Consequently,
\[
h(x, t)\ge h_\infty(x)-C_1\Vert \partial_x u(\cdot, 0)\Vert_{L^2(I)}\ge \frac{2-P}{2}-C_1\Vert \partial_x u(\cdot, 0)\Vert_{L^2(I)},
\]
and thus
\bq\label{stab:pos}
h(x, t)\ge \mez \frac{2-P}{2}
\eq
for all $(x, t)\in I\times [0, T^*)$ provided 
\[
\Vert \partial_x u(\cdot, 0)\Vert_{L^2(I)}\le \frac{1}{2C_1}\frac{2-P}{2}.
\]
Therefore, $T^*=\infty$ according to the blow-up criterion \eqref{blowup}. 

Next, we show that $h$ converges to $h_\infty$ exponentially in $H^1(I)$. Indeed, because $\partial_x^2u(\pm 1, \cdot)=0$ and $\int_I \partial_xu dx=u(1)-u(-1)=0$, Poincar\'e's inequalities yield
\[
\Vert \partial_x^3u(x, t)\Vert_{L^2(I)}\ge C_2\Vert \partial_x^2u(x, t)\Vert_{L^2(I)}\ge C_3\Vert \partial_xu(x, t)\Vert_{L^2(I)}
\]
which combined with \eqref{stab:pos} and \eqref{stab:dt} leads to
\[
\frac{d}{dt}\Vert \partial_x u(\cdot, t)\Vert^2_{L^2(I)}\le -C_4\Vert \partial_xu(\cdot, t)\Vert_{L^2(I)}^2.
\]
By Gr\"onwall's lemma,
\[
\Vert \partial_x u(\cdot, t)\Vert_{L^2(I)}\le \Vert \partial_x u(\cdot, 0)\Vert_{L^2(I)}\exp(-C_4t)\quad\forall t>0.
\]
Finally, note that $u(\pm 1, \cdot)=0$ we conclude by Poincar\'e's inequality that
\bq\label{stab}
\Vert u(\cdot, t)\Vert_{H^1(I)}\le C\Vert  u(\cdot, 0)\Vert_{H^1(I)}\exp(-{C_4} t)\quad\forall t>0.
\eq
Let us now turn to prove that $D(h)\in W^{1, 1}(\Rr^+)$. According to \eqref{D:L1}, $D(h)\in L^1(\Rr^+)$. Thus, by virtue of \eqref{identity:D}, it remains to show that
\[
A:=\int_I \p_th|\p_x^3h|^2(x, s)dx-2\int_I |\p_x\p_th|^2(x, s)dx\in L^1(\Rr^+).
\]
In the rest of this proof, we write $L^pL^q\equiv L^p(\Rr^+; L^q(I))$. We first note that by \eqref{dt:g},
\bq\label{conv:dth:L2}
 \Vert \p_th\Vert_{L^2L^2}\le C\Vert h\Vert_{L^\infty H^2}(\Vert \p_x^3h\Vert_{L^2L^2}+\Vert \p_x^4h\Vert_{L^2 L^2}).
\eq
Consider next $\p_x\p_th=-\p_x^2h\p_x^3h-2\p_xh\p_x^4h-h\p_x^5h$.
It is readily seen that
\[
\Vert\p_xh\p_x^4h\Vert_{L^2L^2}\le C\Vert h\Vert_{L^\infty H^2}\Vert \p_x^4h\Vert_{L^2 L^2},\quad \Vert h\p_x^5h\Vert_{L^2L^2}\le C\Vert h\Vert_{L^\infty H^1}\Vert \p_x^5h\Vert_{L^2 L^2}.
\]
Using \eqref{d3d4} we bound
\begin{align*}
\Vert \p_x^2h\p_x^3h\Vert_{L^2L^2}\le\Vert \p_x^2h\Vert_{L^\infty L^2} \Vert \p_x^3h\Vert_{L^2L^\infty}\le C\Vert \p_x^2h\Vert_{L^\infty L^2}\Vert \p_x^4h\Vert_{L^2L^2}.
\end{align*}
Consequently
\bq\label{conv:dth:H1}
\Vert \p_x\p_th\Vert_{L^2L^2}\le {C\Vert h\Vert_{L^\infty H^2}\Vert \p_x^4h\Vert_{L^2 L^2}+C\Vert h\Vert_{L^\infty H^1}\Vert \p_x^5h\Vert_{L^2 L^2}}.
\eq
In view of the lower bound \eqref{stab:pos}, it follows from \eqref{Cauchy:apriori} that
\bq\label{conv:global}
\Vert h\Vert_{X(\Rr^+)}\le \cF(\Vert h_0\Vert_{H^3}\big).
\eq
This together with  \eqref{conv:dth:H1} yields 
\bq\label{conv:L11}
\int_0^\infty \int_I |\p_x\p_th|^2(x, s)dxds=\Vert \p_x\p_th\Vert_{L^2L^2}^2\le  \cF(\Vert h_0\Vert_{H^3}\big).
\eq
On the other hand, using \eqref{d3d4} and H\"older's inequality we get
\[
\int_I\p_th|\p_x^3h|^2dx\le \Vert \p_th\Vert_{L^2(I)}\Vert \p_x^3h\Vert_{L^2(I)}\Vert \p_x^3h\Vert_{L^\infty(I)}\le C\Vert \p_th\Vert_{L^2(I)}\Vert \p_x^3h\Vert_{L^2(I)}\Vert \p_x^4h\Vert_{L^2(I)},
\]
hence
\[
\int_0^\infty \la  \int_I \p_th|\p_x^3h|^2(x, s)dx\ra ds\le C\Vert \p_th\Vert_{L^2L^2}\Vert \p_x^3h\Vert_{L^\infty L^2}\Vert \p_x^4h\Vert_{L^2 L^2}.
\]
Employing \eqref{conv:dth:L2} and \eqref{conv:global} we deduce that
\[
\int_0^\infty \la \int_I \p_th|\p_x^3h|^2(x, s)dx\ra ds\le  \cF(\Vert h_0\Vert_{H^3}\big)
\]
which combined with \eqref{conv:L11} concludes that $A\in L^1(\Rr^+)$. This completes the proof of $D(h)\in W^{1,1}(\Rr^+)$. According to Corollary 8.9 \cite{Brezis} we then have $D(h(t))\to 0$ as $t\to \infty$, and thus Theorem \ref{theo:relax} implies that $h(t)\to h_P$ in $H^3(I)$ as $t\to \infty$.

{\bf Part 2.} Let $P\ge 2$, and let $h_0\in H^3(I)$ satisfy \eqref{bc} and $\inf_I h_0>0$. Suppose that the solution $h$ to \eqref{dp0}-\eqref{bc} with initial data $h_0$ is not pinched at finite time neither at infinite time, then according to Theorem \ref{Cauchy}, $h$ is global, $h\in X(T)$ for any $T>0$, and 
\bq\label{pinch:pos}
\inf_{I\times [0, \infty)}h\ge c_0
\eq
 for some $c_0>0$. Set
\[
h_\infty(x)=\frac{P}{2}(x^2-1)+1.
\]
Observe that $h_\infty$ is a stationary solution of \eqref{dp0}-\eqref{bc} and $h_\infty$ vanishes at $\pm\sqrt{\frac{P}{2}-1}$.
As before, $u=h-h_\infty$ satisfies \eqref{pde:sta}. By virtue of \eqref{pinch:pos}, the proof of \eqref{stab} also gives
 \[
\Vert u(\cdot, t)\Vert_{H^1(I)}\le C\Vert  u(\cdot, 0)\Vert_{H^1(I)}\exp(-Ct)\quad\forall t>0.
\]
In particular,
\[
\lim_{t\to\infty}\Vert h(\cdot, t)-h_\infty(\cdot)\Vert_{C(I)}=0.
\]
Because $h_\infty(\sqrt{\frac{P}{2}-1})=0$, we deduce that $\lim_{t\to \infty} h(\sqrt{\frac{P}{2}-1}, t)=0$ which contradicts \eqref{pinch:pos}.

Assume now that $h$ is global in time. Since $D(h)\in L^1(\Rr^+)$ there exists $t_n\to \infty$ such that $D(h(t_n))\to 0$. By virtue of Theorem \ref{theo:relax},  $h(t_n)\rightharpoonup h_P$ in $H^1(I)$ and $h(t_n)\to h_P$ in $H^3_{loc}(\{x:h_P(x)>0\})$.

\appendix
\section{Weak solutions}
\label{sec:appendix}
\begin{theo}[\bf Existence of global weak solutions]
\label{theo:weak}
Let $h_0\in H^1(I)$ be a nonnegative function such that $h_0\in H^3((-1, -1+\delta_0))\cap H^3((1-\delta_0, 1))$ for some $\delta_0\in (0, 1)$ and $h_0$ satisfies \eqref{bc}. Let $T$ be a positive real number. Then there exists a global weak solution $h$ of \eqref{dp0}-\eqref{bc} in the sense of Definition \ref{weak}. More precisely,
\[
h\in C(\overline I\times [0, T])\cap L^\infty([0, T]; H^1(I))\cap L^2([0, T]); H^2(I))\cap H^1((0, T); H^{-1}(I))
\]
and there exists $\delta\in (0, 1)$ independent of  $T$ such that 
\[
h\in L^2\big([0, T]; H^3((-1, -1+\delta))\cap H^3((1-\delta, 1))\big).
\]
\end{theo}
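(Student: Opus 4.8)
The plan is to realize the weak solution as a limit of the nondegenerate approximations supplied by Theorem~\ref{Cauchy:nd}. Since $h_0\in H^1(I)$ is already $H^3$ on the two boundary strips, I would first choose data $h_0^\eps\in H^3(I)$ that are nonnegative, satisfy \eqref{bc}, coincide with $h_0$ near $\pm1$, and converge to $h_0$ in $H^1(I)$ (mollify in the interior and glue). Let $h_\eps\in X(T)$ solve \eqref{dp:nd} with datum $h_0^\eps$. Everything then reduces to a priori bounds that are uniform in $\eps$ (and in $T$ on finite intervals) together with a compactness argument; note that I will \emph{not} need a sign on $h_\eps$, only on the limit.

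The estimates come in three layers. First, the energy identity \eqref{energy:l} with $g=\sqrt{h_\eps^2+\eps^2}$ gives $E(h_\eps(t))\le E(h_0^\eps)$; together with a Poincar\'e bound for $h_\eps$ minus the reference parabola this yields $\|h_\eps\|_{L^\infty([0,T];H^1)}\le C$ and the dissipation bound $\int_0^\infty\!\int_I g|\partial_x^3h_\eps|^2\le E(h_0^\eps)\le C$, both independent of $T$. Second, for the crucial $L^2H^2$ bound I would run an \emph{entropy} estimate: take $G_\eps$ convex with $G_\eps''(s)=1/\sqrt{s^2+\eps^2}$, multiply the equation by $G_\eps'(h_\eps)$ and integrate by parts. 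The degeneracy cancels exactly, producing $\frac{d}{dt}\int_I G_\eps(h_\eps)+\int_I|\partial_x^2h_\eps|^2=P[\partial_x h_\eps]_{-1}^1-G_\eps'(1)\,[g\partial_x^3h_\eps]_{-1}^1$, where $\partial_x^2h_\eps(\pm1)=P$ and $h_\eps(\pm1)=1$ were used. The last boundary term looks dangerous but is a total time derivative: from $\partial_t h_\eps=-\partial_x(g\partial_x^3h_\eps)$ and $h_\eps(\pm1)=1$ one has $[g\partial_x^3h_\eps]_{-1}^1=-\frac{d}{dt}\int_I h_\eps$, so it becomes $G_\eps'(1)\frac{d}{dt}\int_I h_\eps$. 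Integrating in time, using that $G_\eps$ is bounded below and $\int_I G_\eps(h_0^\eps)$ is bounded uniformly in $\eps$ (as $h_0^\eps$ is uniformly bounded in $L^\infty$ and $s\mapsto G_\eps(s)$ is uniformly bounded on compacts), I obtain $\|h_\eps\|_{L^2([0,T];H^2(I))}\le C$. Third, writing $J_\eps=g\,\partial_x^3h_\eps$ and using $\|g\|_{L^\infty}\le C$ one has $\int_0^T\|J_\eps\|_{L^2}^2\le \|g\|_{L^\infty_{t,x}}\int_0^T\!\int_I g|\partial_x^3h_\eps|^2\le C$, so $\partial_t h_\eps=-\partial_x J_\eps$ is bounded in $L^2([0,T];H^{-1}(I))$.

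It remains to control $[\partial_x h_\eps]_{-1}^1$ and to produce the boundary regularity, both of which follow from the energy bound alone. Since $\|\partial_x h_\eps(t)\|_{L^2}\le C$ and $h_\eps(\pm1)=1$, the inequality $|h_\eps(x,t)-1|\le \sqrt{1-|x|}\,\|\partial_x h_\eps(t)\|_{L^2}$ forces $h_\eps\ge\mez$ on a strip $\{1-\delta<|x|<1\}$ with $\delta$ depending only on $\|h_0\|_{H^1}$ and $P$, hence independent of $\eps$ and $T$. On this strip $g\ge\mez$, so the dissipation bound localizes to $\int_0^T\!\int_{\mathrm{strip}}|\partial_x^3h_\eps|^2\le C$; combined with $\partial_x^2h_\eps(x)=P-\int_x^1\partial_x^3h_\eps$ and the global $L^\infty H^1$ bound this gives a uniform $L^2([0,T];H^3)$ bound near $\pm1$ directly, with no localized commutator analysis needed. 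In particular $\partial_x h_\eps(\pm1,\cdot)\in L^2_t$, which closes the remaining boundary term in the entropy estimate.

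Finally I would pass to the limit. The bounds ``$h_\eps$ bounded in $L^\infty H^1\cap L^2H^2$'' and ``$\partial_t h_\eps$ bounded in $L^2H^{-1}$'' let Aubin--Lions extract a subsequence with $h_\eps\to h$ in $C(\overline I\times[0,T])$ and in $L^2([0,T];H^1(I))$, while $\partial_x^2h_\eps\wc\partial_x^2h$ in $L^2L^2$. Since $g=\sqrt{h_\eps^2+\eps^2}\to h$ uniformly (because $|g-h_\eps|\le\eps$ and $h\ge0$), the products converge so that $g\,\partial_x^2h_\eps\wc h\,\partial_x^2h$ and $|\partial_x h_\eps|^2\to|\partial_x h|^2$, which is precisely what is needed to pass to the limit in \eqref{weakform}. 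Nonnegativity of $h$ is the payoff of the entropy bound: because $G_\eps(s)\to+\infty$ as $\eps\to0$ for every $s<0$ while $\int_I G_\eps(h_\eps(t))$ stays bounded, Fatou forces $h\ge0$ a.e. The regularity class $C\cap L^\infty H^1\cap L^2H^2\cap H^1H^{-1}$ and the near-boundary $L^2H^3$ membership (hence $h(t)\in C^2$ near $\pm1$ for a.e.\ $t$, as required by Definition~\ref{weak}) are inherited from the uniform bounds. I expect the main obstacle to be the entropy step: making the computation rigorous at the level of the approximate solutions and, above all, handling its boundary contributions --- the total-derivative structure of the degenerate boundary term and the uniform near-boundary lower bound are the two observations that make the $L^2H^2$ estimate close.
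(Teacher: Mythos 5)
Your proposal is correct and follows the same overall strategy as the paper: nondegenerate regularization via Theorem~\ref{Cauchy:nd}, the energy/dissipation bounds for $L^\infty H^1$ and $\int\!\!\int g|\partial_x^3h_\eps|^2$, the uniform lower bound $h_\eps\ge\tfrac12$ on a boundary strip (from $h_\eps(\pm1)=1$ and the uniform $C^{1/2}$ bound) yielding the near-boundary $L^2H^3$ regularity, an entropy functional with $G_\eps''(s)=1/\sqrt{s^2+\eps^2}$ for the $L^2H^2$ bound, nonnegativity of the limit from the blow-up of $G_\eps(s)$ for $s<0$, and Aubin--Lions to pass to the limit in the weak formulation. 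The one place you diverge is the step you yourself flag as the main obstacle: the boundary contributions of the entropy estimate. The paper sidesteps them entirely by multiplying by $f_n(h^n)\chi$ with a cutoff $\chi$ supported in $(-1,1)$ and equal to $1$ on the interior, so no boundary terms appear; the price is cross terms involving $\partial_x\chi$, which are supported in the strip where the $H^3$ bound is already available. You instead run the estimate globally and kill the two boundary terms directly --- $P[\partial_xh_\eps]_{-1}^1$ via the trace bound $\partial_xh_\eps(\pm1,\cdot)\in L^2_t$ coming from the strip regularity, and $G_\eps'(1)[g\partial_x^3h_\eps]_{-1}^1$ via the observation that it equals $-G_\eps'(1)\frac{d}{dt}\int_Ih_\eps$, hence integrates to a bounded quantity (note $G_\eps'(1)$ is bounded uniformly in $\eps$ with the normalization at $s=A$). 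Both closures are valid and of comparable length; the cutoff version is slightly more robust in that it never needs traces of $\partial_xh_\eps$ at $\pm1$, while your total-derivative identity is a nice structural shortcut. One small imprecision: $G_\eps$ is uniformly bounded on compact subsets of $[0,\infty)$ but not of $\Rr$, so the uniform bound on $\int_IG_\eps(h_0^\eps)$ uses the nonnegativity of the approximate data, not just its $L^\infty$ bound.
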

\begin{proof}
Let $h_0^n\in H^3(I)$ be a sequence of nonnegative functions satisfying \eqref{bc} such that  $h_0^n\to h$ in $H^1(I)\cap H^3(J)$. According to Theorem \ref{Cauchy:nd} there exists for each $n$ a unique solution $h^n\in X([0, T])$, for any $T>0$, to the problem
\bq\label{dp:nd1}
\begin{cases}
 \partial_th^n(x, t)+\partial_x(\sqrt{ |h^n|^2+n^{-2}}\partial_x^3h^n)(x, t)=0, &\quad(x, t)\in I\times (0, \infty),\\
h^n(\pm 1, t)=1, \partial_x^2h^n(\pm 1, t)=P, &\quad t>0,\\
h^n(x, t)=h^n_0(x), &\quad t=0.
 \end{cases}
 \eq
 Moreover, there exists $C>0$ independent of $n$ and $T$ such that
 \bq\label{weak:bound1}
 \Vert h^n\Vert_{L^\infty([0, T]; H^1(I))}\le C\Vert h^n_0\Vert_{H^1(I)}
 \eq
 and 
 \bq\label{weak:bound2}
\int_0^T\int_I g^n|\p_x^3h^n|^2(x, s)dxds\le C(\Vert h^n_0\Vert_{H^1(I)}^2+1),\quad g^n=\sqrt{|h^n|^2+n^{-2}}.
 \eq
 Writing $g^n\p_x^3h^n=\p_x(g^n\p_x^2h^n)-\p_xg^n\p_x^2h^n$ we have
 \[
0= \partial_th^n+\partial_x(g^n\partial_x^3h^n)=\partial_th^n+\partial_x^2(g^n\p_x^2h^n)-\p_x(\p_xg^n\p_x^2h^n).
 \]
 Then, for any $\varphi\in C_0^\infty(I\times (0, T))$,
 \bq\label{weak:n}
 -\int_0^T\int_Ih^n\p_t\varphi+\int_0^T\int_Ig^n\p_x^2h^n\p_x^2\varphi+\int_0^T\int_I \p_xg^n\p_x^2h^n\p_x\varphi=0.
 \eq
 Because $h^n(\pm 1, \cdot)=1$ and $h^n$ is uniformly bounded in $L^\infty(\Rr^+; C^\mez(\overline I))$ (by virtue of \eqref{weak:bound1} and the embedding $H^1(I)\subset C^\mez(\overline I)$), there exists $\delta>0$ sufficiently small such that 
 \[
 h^n(x, t)\ge \mez\quad\forall t\ge 0,~\forall x \in J_1:=[-1, -1+\delta]\cup [1-\delta, 1]:=J_{1,l}\cup J_{1, r}.
 \]
 It then follows from \eqref{weak:bound2} that
 \bq\label{weak:bound4}
\Vert \p_x^3h^n\Vert_{L^2(\Rr^+; L^2(J_1))}\le C=C(\Vert h_0\Vert_{H^1(I)})
 \eq
which  combined with \eqref{weak:bound1} and interpolation yields
\bq\label{weak:b5}
\Vert h^n\Vert_{L^2([0, T]; H^3(J_1))}\le C=C(\Vert h_0\Vert_{H^1(I)}, T),\quad\forall T>0.
\eq
 Let $A>0$ depend only on $\Vert h_0\Vert_{H^1(I)}$ such that
 $\Vert h^n\Vert_{L^\infty(I\times \Rr^+)}\le A$ for all $n$. We define
 \[
 f_n(s)=-\int_s^A \frac{dr}{\sqrt{r^2+n^{-2}}},\quad F_n(s)=-\int_s^A f_n(r)dr.
 \]
 Note that $g_n(s)\le 0$ and $F_n(s)\ge 0$ for any $s\le A$. Let $\chi$ be a nonnegative cut-off function equal to $1$ on $I_1:=I\setminus J_1$ and supported on $(-1, 1)$. Multiplying the first equation in \eqref{dp:nd1} by $f_n(h^n(x, t))\chi(x)$ then integrating by parts we obtain
 \begin{align*}
 \int_I \partial_th^nf_n(h^n)\chi dx&=-\int_I\partial_x(g^n\partial_x^3h^n)f_n(h^n)\chi dx\\
 &=\int_I g^n\partial_x^3h^nf_n'(h^n)\p_xh^n\chi dx+\int_I g^n\partial_x^3h^n f_n(h^n)\p_x\chi dx\\
 &=\int_I \partial_x^3h^n\p_xh^n\chi dx+\int_Ig^n\partial_x^3h^n f_n(h^n)\p_x\chi dx\\
 &=-\int_I |\partial_x^2h^n|^2\chi dx-\int_I \p_x^2h^n\p_xh^n\p_x\chi dx+\int_Ig^n\partial_x^3h^n f_n(h^n)\p_x\chi dx.
 \end{align*}
 Since
\[
\int_I \partial_th^nf_n(h^n)\chi dx=\frac{d}{dt}\int_I F_n(h^n)\chi dx
\] 
we deduce that
\bq\label{weak:ineq}
\begin{aligned}
&\int_I F_n(h^n)(x, T)\chi dx+\int_0^T\int_{I_1} |\partial_x^2h^n|^2\chi dxds\\
&\le \int_I F_n(h^n)(x, 0)\chi dx-\int_0^T\int_I \p_x^2h^n\p_xh^n\p_x\chi dxds+\int_0^T\int_Ig^n\partial_x^3h^n f_n(h^n)\p_x\chi dxds.
\end{aligned}
\eq
We split
\[
\int_0^T\int_I \p_x^2h^n\p_xh^n\p_x\chi dx=\int_0^T\int_{I_1} \p_x^2h^n\p_xh^n\p_x\chi dx+\int_0^T\int_{J_1}\p_x^2h^n\p_xh^n\p_x\chi dx=:H_1+H_2.
\]
Using H\"older's inequality and \eqref{weak:bound1} we get
\[
|H_1|\le C\Vert \p_x^2h^n\Vert_{L^2([0, T]; L^2(I_1))},\quad  C=C(\Vert h_0\Vert_{H^1(I)}, T).
\]
On the other hand, \eqref{weak:b5} gives
\[
|H_2|\le C=C(\Vert h_0\Vert_{H^1(I)}, T).
\]
Thus
\bq\label{weak:b6}
\la \int_0^T\int_I \p_x^2h^n\p_xh^n\p_x\chi dxds\ra \le C\Vert \p_x^2h^n\Vert_{L^2([0, T]; L^2(I_1))}+C,\quad C=C(\Vert h_0\Vert_{H^1(I)}, T).
\eq
Applying H\"older's inequality together with \eqref{weak:bound1} and \eqref{weak:bound2} we find
\bq\label{weak:b7}
\la \int_0^T\int_Ig^n\partial_x^3h^n f_n(h^n)\p_x\chi dxds\ra\le C=C(\Vert h_0\Vert_{H^1(I)}, T).
\eq
In addition, it is easy to see that 
\bq\label{weak:b8}
\int_I F_n(h^n)(x, 0)\chi dx\le C=C(\Vert h_0\Vert_{H^1(I)}).
\eq
Putting together \eqref{weak:ineq}, \eqref{weak:b6},  \eqref{weak:b7} and  \eqref{weak:b8} yields
\[
\Vert \p_x^2h^n\Vert^2_{L^2([0, T]; L^2(I_1))}\le C\Vert \p_x^2h^n\Vert_{L^2([0, T]; L^2(I_1))}+C,\quad C=C(\Vert h_0\Vert_{H^1(I)}, T).
\]
Consequently, there exists $C=C(\Vert h_0\Vert_{H^1(I)}, T)$ such that 
\[
\Vert \p_x^2h^n\Vert_{L^2([0, T]; L^2(I_1))}\le C\quad\forall n.
\]
 This together with \eqref{weak:b5} implies
\bq\label{weak:b9}
\Vert \p_x^2h^n\Vert_{L^2([0, T]; L^2(I))}\le C\quad\forall n.
\eq
Let us fix a positive (finite) time $T$. A combination of \eqref{weak:bound1} and \eqref{weak:bound2} leads to the uniform boundedness of $g^n\p_x^3h^n$ in $L^2([0, T]; L^2(I))$, hence the uniform boundedness of $\p_th^n$ in $L^2([0, T]; H^{-1}(I))$. Using this, \eqref{weak:bound1}, \eqref{weak:b5}, \eqref{weak:b9} and Aubin-Lions's lemma we conclude that up to extracting a subsequence,
\[
h^n\wc h~ \text{in}~L^2([0, T]; H^2(I)),\quad h^n\to h~ \text{in}~ C(\overline I\times [0, T])\cap L^2([0, T]; H^1(I))\cap L^2([0, T]; C^2(\overline{J_1}))
\]
for some 
\[
h\in C(\overline I\times [0, T])\cap L^\infty([0, T]; H^1(I))\cap L^2([0, T]; H^2(I))\cap L^2([0, T]; H^3(J_{1, l})\cap H^3(J_{1, r}))
\]
with $\p_th \in L^2((0, T); H^{-1}(I))$. In particular, $h$ satisfies the boundary conditions \eqref{bc} for {\it a.e.} $t\in [0, T]$. We claim that 
\[
h(x, t)\ge 0\quad\forall (x, t)\in I\times [0, T].
\]
Indeed, coming back to \eqref{weak:ineq} we deduce from \eqref{weak:b6}, \eqref{weak:b7} and \eqref{weak:b8} that
\bq\label{weak:b10}
\int_I F_n(h^n(x, t))dx \le C(\Vert h_0\Vert_{H^1(I)}, T)
\eq
for all $n\ge 0$ and $t\le T$.  Assume by contradiction $h(x_0, t_0)<0$ for some $(x_0, t_0)\in I\times [0, T]$.  Since $h^n\to h$ uniformly on $\overline I\times [0, T]$, there exist $\eta>0$ and $n_0\in \xN$ such that 
\[
h_n(x, t_0)<-\eta\quad\text{if}~|x-x_0|\le \delta,~n\ge n_0.
\]
But for such $x$,
\[
F_n(h^n(x, t_0))=-\int_{h^n(x, t_0)}^Af_n(s)ds\ge -\int_{-\eta}^0f_n(s)ds\to  -\int_{-\eta}^0f_\infty(s)ds\quad\text{as}~n\to \infty
\]
by the monotone convergence theorem, here 
\[
f_\infty(s)\defn\lim_{n\to \infty} f_n(s)=-\infty
\]
for any $s\le 0$. It follows that
\[
\int_I F_n(h^n(x, t_0))=+\infty
\]
which contradicts \eqref{weak:b10}, and thus $h\ge 0$.

Then letting $n\to \infty$ in \eqref{weak:n} leads to
 \bq\label{weak1}
 -\int_0^T\int_Ih\p_t\varphi +\int_0^T\int_I h\p_x^2h\p_x^2\varphi+\int_0^T\int_I \p_xh\p_x^2h\p_x\varphi=0\quad\forall \varphi\in C^\infty_0(I\times (0, T)).
 \eq
 Writing $\p_xh\p_x^2h=\mez\p_x|\p_xh|^2$ and integrating by parts in the last integral we arrive at
  \bq\label{weak2}
 -\int_0^T\int_Ih\p_t\varphi +\int_0^T\int_I \big(h\p_x^2h-\mez|\p_xh|^2\big)\p_x^2\varphi =0\quad\forall \varphi\in C^\infty_0(I\times (0, T)).
 \eq
 In other words, $h$ is a weak solution of \eqref{dp0}-\eqref{bc} in the sense of Definition \ref{weak}. 
\end{proof}
In general, weak solutions can be non-unique. Nevertheless, the steady weak solution $h_P$ is unique as shown in the next Proposition.
\begin{prop}[\bf Uniqueness of $h_P$]
\label{unique:hP}
For any $P>0$, $h_P$ is the unique even weak steady solution, in the sense of Definition \ref{weak}, to \eqref{dp0}-\eqref{bc}.\end{prop}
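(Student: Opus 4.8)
The plan is to reduce the steady weak formulation to a scalar second-order ODE carrying a first integral, and then to pin the solution down from the boundary data together with nonnegativity. For a time-independent $h=h(x)$, I would test the weak form \eqref{weakform} against $\varphi(x,t)=\psi(x)\rho(t)$ with $\psi\in C_0^\infty(I)$ and $\rho\in C_0^\infty((0,T))$ chosen so that $\int_0^T\rho\,dt\neq 0$. The time-derivative term drops, because $\int_0^T\rho'\,dt=0$, and what remains is $\int_I G\,\partial_x^2\psi\,dx=0$ for every $\psi$, where $G:=h\partial_x^2h-\mez|\partial_x h|^2$ is exactly the flux appearing in \eqref{id:nonl}. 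Since $h\in H^2(I)\subset C^1(\overline I)$, we have $G\in L^1(I)$, so $\partial_x^2 G=0$ in the sense of distributions, which forces $G$ to agree a.e.\ with an affine function $\alpha x+\beta$. Here evenness is essential: if $h$ is even then $\partial_x h$ is odd, hence $G$ is even, and an even affine function must have $\alpha=0$. Thus $G\equiv\beta$ is a constant.

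Next I would bootstrap regularity on the open set $Z=\{x\in I:h(x)>0\}$. On $Z$ the identity $G\equiv\beta$ reads $\partial_x^2 h=(\beta+\mez|\partial_x h|^2)/h$, whose right-hand side is continuous; iterating yields $h\in C^\infty(Z)$. Differentiating $G\equiv\beta$ then gives $h\,\partial_x^3 h=0$, so $\partial_x^3 h=0$ on $Z$, i.e.\ $h$ is a quadratic polynomial on each connected component of $Z$. Since $h(1)=1>0$, continuity produces a component $J_r=(x_*,1)$ with $x_*\in[-1,1)$; as $\partial_x^2 h(1)=P$ and $\partial_x^2 h$ is constant on $J_r$, we obtain $h(x)=\mez P x^2+bx+c$ there, with $\mez P+b+c=1$.

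The decisive step is the analysis of the left endpoint $x_*$. If $x_*=-1$, then $h>0$ on $(-1,1)$; evenness forces $b=0$ and the boundary data give $h=\mez P(x^2-1)+1$, which is nonnegative precisely when $P\le 2$ and equals $h_P$ from \eqref{eq:hp:def}. If instead $x_*\in(-1,1)$, then $h(x_*)=0$ is an interior minimum of the nonnegative $C^1$ function $h$, so Fermat's lemma gives $\partial_x h(x_*)=0$; together with $h(x_*)=0$ this yields $\beta=G(x_*)=0$, and solving $h(x_*)=\partial_x h(x_*)=0$, $h(1)=1$ with $\partial_x^2h\equiv P$ forces $x_*=1-\sqrt{2/P}=x_P$ and $h(x)=\mez P(x-x_P)^2$ on $J_r$. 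A short reflection argument (if $x_P<0$ then $-x_P\in J_r$, yet evenness gives $h(-x_P)=h(x_P)=0$, a contradiction) rules out $\mez<P<2$, so this case occurs only for $P\ge 2$. Finally, since $\beta=0$, any component of $Z$ contained in $(-x_P,x_P)$ would carry a perfect-square parabola $\mez A(x-x_0)^2$ vanishing at both of its interior endpoints, which is impossible, since such a parabola has a single zero while Fermat forces $\partial_x h$ to vanish at both endpoints. Hence $h\equiv 0$ on $(-x_P,x_P)$, and by evenness $h=\mez P(|x|-x_P)^2$ for $x_P\le|x|\le 1$; that is, $h=h_P$.

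I expect the main difficulty to be the zero-set analysis in the last step: excluding nonzero-angle touchdown and exotic configurations, such as infinitely many components of $Z$ accumulating at a zero of $h$, or oscillation inside the anticipated flat region. The constancy of $G$ obtained from evenness is precisely what tames this, since it turns every positive arc into an explicit quadratic whose zeros are rigidly constrained, after which Fermat's lemma finishes the argument, playing here the role that the dissipation-based Lemma \ref{lemm2:relax} plays in the dynamic relaxation result. By contrast, the reduction of the space-time weak form to the spatial identity and the bootstrap to $C^\infty(Z)$ are routine, but should be stated with care.
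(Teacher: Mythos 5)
Your proof is correct, and while it opens the same way as the paper's, it finishes by a genuinely different mechanism. Both arguments reduce the steady weak formulation to $\partial_x^2\bigl(h\partial_x^2h-\frac{1}{2}|\partial_xh|^2\bigr)=0$ and use evenness to kill the linear part of the resulting affine function. At that point the paper deliberately forgets the first integral: it only extracts $\partial_x^3h=0$ on $\{h>0\}$ (via a duality argument with test functions $\psi/h$), notes that $h\in H^2$ cannot touch zero at a nonzero angle, and then declares itself to be ``in the same situation as in the proof of Theorem~\ref{theo:relax}'', importing that theorem's endgame (the classification of components into parabolas and lines, the Fermat-type touchdown argument of Lemma~\ref{lemm2:relax}, and a slightly delicate slope-sign discussion to exclude interior positive arcs). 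You instead carry the constant $G\equiv\beta$ through the whole argument, determine $\beta=0$ in the pinched case by letting $x\to x_*^+$ along the explicit quadratic, and then observe that $\beta=0$ forces every interior positive arc to be a perfect square $\frac{A}{2}(x-x_0)^2$ --- which cannot vanish at two distinct endpoints --- while nonconstant affine arcs are excluded outright because they give $G=-\frac{1}{2}p^2\neq 0$. This makes the exclusion of bumps inside $(-x_P,x_P)$ immediate and self-contained, at the cost of redoing a case analysis the paper gets for free by reusing Theorem~\ref{theo:relax}. Two cosmetic remarks: in your $x_*=-1$ branch the endpoint case $P=2$ is actually vacuous (there $h=x^2$ vanishes at the origin, so the component containing $1$ is $(0,1)$, and your second branch handles it); and a component of $Z$ other than $(\pm x_P,\pm 1)$ need only be contained in the closed interval $[-x_P,x_P]$, though its endpoints are still interior to $I$ so your Fermat argument applies verbatim. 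Neither affects the validity of the proof.
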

\begin{proof}
It is easy to check that $h_P$ is an even weak steady solution in the sense of  Definition \ref{weak}. Assume now that $h$ is an even weak steady solution, we prove that $h=h_P$. We first notice that the weak formulation \eqref{weakform} is equivalent to 
$\p_x\p_x(h\p_x^2h-\mez|\p_xh|^2)=0$ in $\D'(I)$, or again $\p_x(h\p_x^2h-\mez|\p_xh|^2)=C$ in $\D'(I)$ for some constant $C$.  We claim that $C=0$.  Indeed, writing $h\p_x^2h=\p_x(h\p_xh)-|\p_xh|^2)$ we get  
\[
C\int_I \varphi=-\langle h\p_x^2h-\mez|\p_xh|^2, \p_x\varphi \rangle_{\D'(I), \D(I)}=-(h\p_x^2h-\mez|\p_xh|^2, \p_x\varphi)_{L^2(I), L^2(I)}
\]
for any $\varphi\in \D(I)$. Noting that  $h$ is even, we can make the change of variables $x\mapsto -x$ to obtain
\[
C\int_I \varphi =\langle h\p_x^2h-\mez|\p_xh|^2, \p_x\varphi_1\rangle_{L^2(I), L^2(I)}=-C\int_I \varphi_1
\]
with $\varphi_1(\cdot)=\varphi(-\cdot)\in \D(I)$. Since $\int_I \varphi_1=\int_I \varphi$ for any $\varphi\in \D(I)$ we conclude that $C=0$ as claimed.

We thus have
\bq\label{eq:unique:hP}
\begin{aligned}
0&=(h\p_x^2h-\mez|\p_xh|^2, \p_x\varphi)_{L^2(I), L^2(I)}\\
&=(\p_x^2h, \p_x(h\varphi))_{L^2(I), L^2(I)}-(\p_x^2h, \p_xh\varphi)_{L^2(I), L^2(I)}+\mez(\p_x|\p_xh|^2, \varphi)_{L^2(I), L^2(I)}\\
&=-\langle \p_x^3h, h\varphi\rangle_{H^{-1}(I), H^1_0(I)}
\end{aligned}
\eq
for any $\varphi\in H^1_0(I)$. If $h(x_0)>0$, $x_0\in \overline I$, there exists a neighborhood $I_{x_0}$ of $x_0$ in $I$ such that $h\ge \mez h(x_0)$ on $I_{x_0}$. For any $\psi\in H^1_0(I_{x_0})$, defining 
\[
\varphi(x)=
\begin{cases}
\frac{\psi}{h},\quad x\in I_{x_0},\\
0,\quad x\in I\setminus I_{x_0}
\end{cases}
\] 
we have $\varphi\in  H^1_0(I_{x_0})\subset H^1_0(I)$ and by \eqref{eq:unique:hP},
\[
\langle \p_x^3h, \psi\rangle_{H^{-1}(I_{x_0}), H^1_0(I_{x_0})}=0.
\]
This implies $\p_x^3h=0$ in $\D'(I_{x_0})$, and thus $\p_x^3h=0$ in $\D'(\{h>0\})$. Consequently, on each connected component (which are open intervals) of $\{h>0\}$, $h$ is either a parabola or a straight line. In addition, $h$ cannot hit $0$ at a non-zero angle because $h\in H^2(I)$. We are thus in the same situation as in the proof of Theorem \ref{theo:relax} which allows us to conclude that $h=h_P$.
\end{proof}


\vspace{.2in}
\noindent{\bf{Acknowledgment.}} 
The research of PC is partially funded by NSF grant DMS-1209394. The research if VV is partially funded by NSF grant DMS-1652134 and an Alfred P.~Sloan Fellowship.


\newcommand{\etalchar}[1]{$^{#1}$}

\end{document}